\newtheorem{theorem}{Theorem}[section]
\newtheorem{lemma}[theorem]{Lemma}
\newtheorem{proposition}[theorem]{Proposition}
\newtheorem{corollary}[theorem]{Corollary}
\theoremstyle{definition}
\newtheorem{example}[theorem]{Example}
\newtheorem{remark}[theorem]{Remark}
\newcommand{\excise}[1]{}
\newcommand{\Spec}{\operatorname{Spec}}
\newcommand{\cone}{{\operatorname{cone}}}
\renewcommand{\dim}{\operatorname{dim}}
\newcommand{\crk}{\operatorname{crk}}
\renewcommand{\and}{\qquad\text{and}\qquad}
\newcommand{\Hom}{\operatorname{Hom}}
\newcommand{\Z}{\mathbb{Z}}
\newcommand{\Q}{\mathbb{Q}}
\newcommand{\N}{\mathbb{N}}
\newcommand{\C}{\mathbb{C}}
\newcommand{\IH}{I\! H}
\newcommand{\cC}{\mathcal{C}}
\newcommand{\OS}{OS}
\newcommand{\Rep}{\operatorname{Rep}}
\newcommand{\op}{{\operatorname{op}}}
\newcommand{\Comp}{\operatorname{Comp}}
\newcommand{\cGop}{\cG^\op}
\newcommand{\cTop}{\cT^\op}
\newcommand{\cG}{\mathcal{G}}
\newcommand{\cT}{\mathcal{T}}
\newcommand{\cRT}{\mathcal{RT}}
\newcommand{\RTi}{\mathbf{T}}
\newcommand{\cRTop}{\cRT^{\op}}
\newcommand{\OI}{\operatorname{OI}}
\newcommand{\ue}{\underline{e}}
\newcommand{\uf}{\underline{f}}
\newcommand{\um}{\underline{m}}
\newcommand{\un}{\underline{n}}
\newcommand{\uv}{\underline{v}}
\newcommand{\OIr}{\OI^{\times r}}
\newcommand{\bigmid}{\;\big{|}\;}
\newcommand{\Bigmid}{\;\Big{|}\;}
\newcommand{\Edge}{\operatorname{Edge}}
\renewcommand{\Vert}{\operatorname{Vert}}
\newcommand{\UConf}{\operatorname{UConf}}
\newcommand{\Sw}{\widetilde{S}}
\newcommand{\Aut}{{\operatorname{Aut}}}
\newcommand{\tS}{\widetilde{S}}
\newcommand{\ith}{i^{\operatorname{th}}}
\newcommand{\tith}{2i^{\operatorname{th}}}
\begin{document}
\spacing{1.2}
\noindent{\LARGE\bf Functorial invariants of trees and their cones}\\

\noindent{\bf Nicholas Proudfoot and Eric Ramos}\\
Department of Mathematics, University of Oregon,
Eugene, OR 97403\\

{\small
\begin{quote}
\noindent {\em Abstract.}
We study the category whose objects are trees (with or without roots) and whose morphisms
are contractions.  We show that the corresponding contravariant module categories are locally Noetherian,
and we study two natural families of modules over these categories.  The first takes a tree to a graded piece of the homology
of its unordered configuration space, or to the homology of the unordered configuration space of its cone.
The second takes a tree to a graded piece of the intersection homology of the reciprocal plane of its cone, which is a vector space
whose dimension is given by a Kazhdan-Lusztig coefficient.
We prove finite generation results for each of these modules, which allow us to draw conclusions about the growth
of Betti numbers of configuration spaces and of Kazhdan-Lusztig coefficients of graphical matroids.
\end{quote} }

\section{Introduction}
Our aim in this paper is to study two different ways to assign an abelian group or a vector space to a graph, both of which 
are contravariantly functorial with respect to contractions.  The first assigns to a graph $G$ the $\ith$ homology group of the 
unordered configuration space of $n$ points on $G$.  It is not obvious that a contraction of graphs induces a map
on homology groups of configuration spaces; this follows from the fact that this homology group can be computed using the 
reduced \'Swi\k{a}tkowski complex \cite{Sw-graphs, ADK}, which is itself functorial with respect to contractions.
The second assigns to a graph $G$ the $\tith$ intersection homology group of a certain algebraic variety $X_G$ called the reciprocal plane of $G$.
A contraction of graphs induces an inclusion of reciprocal planes, 
which in turn induces a map on intersection cohomology.\footnote{The induced
map on varieties is not quite canonical, but it is canonical enough that the induced map on intersection homology does not depend on any choices.}
% The dimension of the $\tith$ intersection homology of $X_G$ is equal to the coefficient of $t^i$ in the Kazhdan-Lusztig polynomial of 
% the matroid associated with $G$ \cite{EPW}, so we will refer to this vector space invariant as the $\ith$ Kazhdan-Lusztig coefficient of $G$.

Let $\cG$ be the category whose objects are graphs and whose morphisms are contractions (see Section \ref{sec:trees} for
a more precise definition).
Both of the aforementioned procedures of assigning an abelian group or vector space to a graph may be regarded as
functors from the opposite category $\cGop$ to the category of finite dimensional modules over a Noetherian commutative ring $k$.
(For the homology of configuration spaces we will usually want to take $k$ to be $\Z$ or $\Q$, whereas for the intersection homology
of the reciprocal plane we will take $k$ to be $\C$.)  Such functors are called $\cGop$-modules, and the category that they form is called $\Rep_k(\cGop)$.
Unfortunately, this category is not well behaved.  In particular, it is not locally Noetherian: there is a natural notion
of finite generation for a $\cGop$-module, and a submodule of a finitely generated module need not itself be finitely generated.
Since both of our modules are computed by passing to the homology of some complex of modules, this means that we have no hope of proving any
finite generation results.

We deal with this difficulty by working with the subcategory $\cT\subset\cG$ consisting of trees, which has much nicer properties.
Homology groups of configuration spaces of trees are already relatively well understood \cite{Farley, MaSa, Ramos-Graph}, 
and Kazhdan-Lusztig coefficients of trees are trivial in positive degree.  However, we can obtain much more interesting results
by looking at the cone over a tree, which is obtained by adding a single new vertex and connecting it to all other vertices.
For example, the cone over a path is a fan (see the figure in Example \ref{cone-flat}), and the cone over 
the complete bipartite graph $K_{m,1}$ is the thagomizer graph \cite{thag}.  A contraction of trees
more or less
% \footnote{There is a technical difficulty here owing to the fact that the contraction 
% of a cone is isomorphic to the cone of a contraction with some repeated edges.  For Kazhdan-Lusztig coefficients this does not pose any problems,
% but for homology of configuration spaces it forces us to work with the category of rooted trees, which also has a Noetherian module category.}
induces a contraction of cones of trees (see Remark \ref{more or less}), and we therefore obtain modules over our tree category.

\begin{remark}
This operation of taking the cone over a graph is very natural from the point of view of matroid theory, since the graph may be recovered
from the matroid associated with its cone (two vertices are connected by an edge if and only if the corresponding three
edges of the cone form a cycle), but not from the matroid associated with the graph itself.
\end{remark}

\subsection{Categorical results}
We now state our main results about tree categories, each of which is proved in Section \ref{sec:tree-cats}.
We have already introduced the category $\cT$ of trees whose morphisms are contractions.  Let $\cRT$ be the category of rooted
trees, which means that we mark one vertex and require our contractions to take the root to the root.
For any integer $l\geq 2$, let $\cT_l\subset \cT$ be the full subcategory consisting of trees with at most $l$ leaves.
The following result is proved in Section \ref{sec:trees}.

\begin{theorem}\label{Noetherian}
Fix a commutative Noetherian ring $k$.  The categories $\Rep_k(\cTop)$, $\Rep_k(\cRTop)$, and $\Rep_k(\cT_l^\op)$
are all locally Noetherian.
\end{theorem}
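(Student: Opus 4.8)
The plan is to derive all three claims from the theory of Gr\"obner categories of Sam and Snowden: a \emph{quasi-Gr\"obner} category $\cC$ has the property that $\Rep_k(\cC)$ is locally Noetherian for every Noetherian ring $k$, so it suffices to prove that $\cTop$, $\cRTop$, and $\cT_l^\op$ are quasi-Gr\"obner. My first step would be to reduce all three to a single category, namely $\cRTop$. The crucial combinatorial input is that contracting an edge of a tree never increases the number of leaves, so for every $l$ the subcategory $\cT_l\subset\cT$ is full and closed under contractions; a short argument with restriction and left Kan extension then shows that local Noetherianity of $\Rep_k(\cTop)$ implies local Noetherianity of $\Rep_k(\cT_l^\op)$. (Given a submodule $N$ of a finitely generated $\cT_l^\op$-module $M$, one realizes $N$ as the restriction to $\cT_l$ of a submodule $Q$ of the finitely generated $\cTop$-module $\operatorname{Lan}(M)$; since there is no contraction from a tree with at most $l$ leaves to a tree with more, only those generators of $Q$ supported on objects of $\cT_l$ survive restriction, so $N$ is again finitely generated.) Moreover the forgetful functor $\cRTop\to\cTop$ is essentially surjective and satisfies property (F): for a tree $x$, the finitely many rootings of $x$, together with the identity morphisms of $x$, furnish the required covering data. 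Hence $\cTop$ is quasi-Gr\"obner as soon as $\cRTop$ is, and the whole theorem reduces to showing that $\cRTop$ is quasi-Gr\"obner.

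To do this I would pass to a rigidification. Let $\cRT^{\mathrm{pl}}$ be the category of \emph{planar} rooted trees (with a linear order on the children of each vertex) and root- and planarity-preserving contractions. A planar rooted tree has no nontrivial automorphisms — the rigidity that the Gr\"obner-basis formalism requires — and the functor $\cRT^{\mathrm{pl},\op}\to\cRTop$ that forgets the planar structure is essentially surjective and satisfies property (F), since a rooted tree carries only finitely many planar structures. It then remains to verify the Gr\"obner condition for $\cRT^{\mathrm{pl},\op}$: for every planar rooted tree $x$, the set of all contractions $y\to x$, preordered by declaring $c\le c'$ when $c'$ factors through $c$, should be Noetherian and admit an admissible well-order. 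The organizing observation is that such a contraction is exactly the datum of a planar rooted ``fiber'' tree over each vertex of $x$ — the fibers of a contraction of a tree are automatically connected, and each inherits a root and a planar structure — together with, for every edge of $x$, a choice of attaching vertices in the two adjacent fibers. Thus the morphism set in question is, after a harmless encoding, a set of tuples of planar rooted trees with marked vertices plus a finite amount of gluing data.

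The heart of the argument — and the step I expect to be the main obstacle — is establishing the well-quasi-order on these tuples together with a compatible admissible order. For the well-quasi-order I would invoke Kruskal's tree theorem in its labelled form, exploiting the fact that on trees the relation ``is a contraction of'' is coarser than ``is a topological minor of'': a homeomorphic embedding of $H$ into a tree $G$ yields a contraction of $G$ onto $H$ (contract everything onto the image of $H$, then contract the subdivision paths), so a Kruskal-style embedding of fibers upgrades to the factorization of contractions needed for the preorder; Higman's lemma then absorbs the finitely many coordinates indexed by the vertices and edges of $x$ and by the attaching data. The admissible order can be produced recursively — order fibers first by size, then by the recursively defined order, and combine the coordinates lexicographically — but checking that it is genuinely admissible (so that the underlying division algorithm is valid) and keeping track of exactly how the attaching data transforms under composition of contractions is the delicate bookkeeping that would still need to be carried out. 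Once $\cRT^{\mathrm{pl},\op}$ is shown to be Gr\"obner, the reductions of the first paragraph complete the proof.
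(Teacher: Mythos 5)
Your reductions are sound and essentially the ones in the paper: the forgetful functor $\cRTop\to\cTop$ has property (F) via the finitely many rootings of a given tree, and the bounded-leaf case follows from local Noetherianity of $\Rep_k(\cTop)$ because a contraction never increases the number of leaves (the paper phrases this by identifying $\Rep_k(\cT_l^\op)$ with the $\cTop$-modules that vanish on trees with more than $l$ leaves; your Kan-extension variant works too). The genuine gap is the core claim that $\cRTop$ is quasi-Gr\"obner. As you yourself concede, the well-quasi-order on contractions into a fixed planar rooted tree, the upgrade from a Kruskal-type embedding of decorated fibers to an actual factorization of contractions compatible with roots, planar structures, and attaching vertices, and the construction and verification of an admissible order for (G1) are all ``bookkeeping that would still need to be carried out'' --- but that bookkeeping is precisely the mathematical content of the statement; what you have is a plan, not a proof. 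In particular, converting a homeomorphic embedding of one fiber into another into a contraction that respects the marked attachment vertices is not automatic (the attachment points of the larger fiber need not lie on the embedded subdivision), so the marking data must be built into the labelled wqo you invoke, and no (G1) order for your rigidified category is actually exhibited.

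The paper closes exactly this gap by a different device: Proposition \ref{opposite} shows that $\cRTop$ is equivalent to Barter's category $\RTi$ of rooted trees with pointed order embeddings of vertex sets (a contraction corresponds to the embedding sending each vertex to the maximal vertex of its preimage, and conversely), and then Corollary \ref{cRTop-qg} simply cites Barter's theorem that $\RTi$ is quasi-Gr\"obner; property (F) of the forgetful functor and the bounded-leaf argument finish the proof as in your first paragraph. Barter's proof is itself a Kruskal/Higman argument of the kind you sketch, so your route is headed in a reasonable direction, but as written the hardest step is missing. Either carry out the wqo and admissible-order analysis in full for your planar category, or prove the equivalence with $\RTi$ and quote Barter, as the paper does.
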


One of our main motivations is to study the growth of the dimensions of various modules in these categories.
There is a notion of degree of generation, and we call a module $d$-small if it
is isomorphic to a subquotient of a module that is generated in degree at most $d$, and $d$-smallish
if it admits a filtration whose associated graded is $d$-small.  Theorem \ref{Noetherian} easily implies that $d$-smallish
modules are themselves finitely generated (Proposition \ref{smallish fg}), though not necessarily in degrees $\leq d$.
The next result is proved in Section \ref{sec:growth}.

\begin{theorem}\label{bounded growth}
Suppose that $k$ is a field and $M$ is a $d$-smallish $k$-linear module over $\cTop$, $\cRTop$, or $\cT_l^\op$.
There exists a polynomial $f_M(t)$ with the property that, for any tree $T$, $\dim_k M(T) \leq f_M(|T|)$, where $|T|$
is the number of edges of $T$.
\end{theorem}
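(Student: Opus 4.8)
The plan is to reduce the statement to a bound on free (representable) modules and then settle that bound by a direct combinatorial count. To begin, since a $d$-smallish module is finitely generated by Proposition \ref{smallish fg}, any filtration of $M$ with $d$-small associated graded must stabilize after finitely many steps (the finitely many generators all appear at some finite stage); as $\dim_k$ is additive across short exact sequences and a finite sum of polynomials is again a polynomial, it suffices to produce $f_M$ under the assumption that $M$ itself is $d$-small.

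The combinatorial heart is the following claim: for any fixed tree $S$, the function $T\mapsto\#\Hom_{\cT}(T,S)$ is bounded above by a polynomial in $|T|$ of degree $|S|$. Indeed, a contraction $T\twoheadrightarrow S$ is determined by the set $F\subseteq E(T)$ of edges that are \emph{not} contracted — which may be an arbitrary subset, since every set of edges of a tree spans a subforest — together with an identification of the resulting minor $T/(E(T)\smallsetminus F)$ with $S$. That minor has exactly $|F|$ edges, so it can be isomorphic to $S$ only when $|F|=|S|$, and in that case there are at most $|\Aut(S)|$ identifications; hence $\#\Hom_{\cT}(T,S)\le\binom{|T|}{|S|}\,|\Aut(S)|$. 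The identical reasoning applies in $\cRT$ and in $\cT_l$, where contractions are still recorded by subsets of edges and the root-preservation, resp.\ leaf-count, constraint can only decrease the count.

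Finally I would assemble the estimate. Write $M\cong N/N'$ with $N'\subseteq N\subseteq P$ and $P$ generated in degree $\le d$. Since $M$ is finitely generated, lift a finite generating set of $M$ into $N$; the submodule $N_0\subseteq N$ it generates surjects onto $M$, so $\dim_k M(T)\le\dim_k N_0(T)$ for every $T$. Each lifted generator lies in some $P(U)$ and, because $P$ is generated in degree $\le d$, is a $k$-linear combination of elements $P(\alpha)(w)$ with $\alpha\in\Hom_{\cTop}(S,U)$, $|S|\le d$, and $w\in P(S)$; collecting the finitely many $w$ that occur produces a submodule $Q\subseteq P$ that is finitely generated in degree $\le d$ and satisfies $N_0\subseteq Q$, whence $\dim_k M(T)\le\dim_k Q(T)$. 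Such a $Q$ is a quotient of a finite direct sum $\bigoplus_{i=1}^s P_{S_i}$ of free modules $P_{S_i}(T)=k[\Hom_{\cT}(T,S_i)]$ with $|S_i|\le d$, so $\dim_k M(T)\le\sum_{i=1}^s\#\Hom_{\cT}(T,S_i)\le\sum_{i=1}^s\binom{|T|}{|S_i|}\,|\Aut(S_i)|$, and the right-hand side is a polynomial $f_M$ in $|T|$ of degree at most $d$. The arguments for $\cRTop$ and $\cT_l^\op$ are the same.

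The step I expect to be the main obstacle is conceptual rather than computational: the naive bound $\dim_k M(T)\le\sum_{S'}\#\Hom_{\cT}(T,S')\cdot\dim_k Q(S')$ ranges over all $\sim 2^{|T|}$ contractions of $T$ and is worthless, and the real content is the observation that only contractions onto trees of \emph{bounded} size contribute, and these are indexed by \emph{bounded} subsets of edges, of which there are just $O(|T|^d)$. A secondary point requiring care is that finite generation of $M$ (Proposition \ref{smallish fg}) is genuinely used — without it a $d$-small module could take infinite-dimensional values — so the passage to the finitely generated $Q$ generated in degree $\le d$ cannot be skipped.
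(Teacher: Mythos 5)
Your argument is correct and follows essentially the same route as the paper: reduce to modules generated in degrees $\leq d$ and bound $\#\Hom_{\cT}(T,S)$ for $|S|\leq d$ by $\binom{|T|}{|S|}\,|\Aut(S)|$, a contraction being recorded by its set of contracted edges together with an identification of the quotient with $S$. The only remark worth making is that your construction of the intermediate module $Q$ is superfluous under the paper's definitions, since ``generated in degrees $\leq d$'' already means a quotient of a \emph{finite} direct sum of principal projectives $P_{T_i}$ with $|T_i|\leq d$, so the bound $\dim_k M(T)\leq\sum_i\#\Hom_{\cT}(T,T_i)$ is immediate for any $d$-small $M$.
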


It is impossible to ask $\dim_k M(T)$ to be equal to a polynomial in $|T|$, because this dimension typically depends
on more than just the number of edges.  However, there are certain operations that we can perform on a tree that cause
the dimension of a smallish module to grow polynomially.  We will state the next result in a very informal way, and ask the
reader to consult Sections \ref{sec:subdivision}-\ref{sec:growth} for more precise formulations.  The relevant theorems are
Theorems \ref{actual polynomial} and \ref{actual polynomial 2}.

\begin{theorem}
Suppose that $k$ is a field and $M$ is a $d$-smallish object of $\Rep_k(\cTop)$ or $\Rep_k(\cRTop)$.
Fix a tree $T$, and build new trees from $T$ via either 
via subdivision (breaking finitely many edges up into paths) or sprouting (adding new leaves at finitely many vertices).
The dimension of $M$ evaluated at one of these new trees is eventually equal to a polynomial of degree at most $d$ in the new parameters.
If $M$ is a $d$-smallish object of $\Rep_k(\cT_l^\op)$, the same result holds for subdivision.
\end{theorem}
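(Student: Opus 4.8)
The plan is to reduce the statement to a computation about the images of principal projective modules under subdivision and sprouting, and then to apply known finiteness properties of $\OI$- and $\FI$-modules. First, the reductions. Since $M$ is $d$-smallish it is finitely generated by Proposition~\ref{smallish fg}, hence Noetherian in the relevant locally Noetherian category (Theorem~\ref{Noetherian}), so the filtration witnessing $d$-smallishness stabilizes after finitely many steps; its associated graded pieces are subquotients of $M$, hence finitely generated and $d$-small. As dimension is additive along a finite filtration and a finite sum of polynomials of degree $\le d$ is again a polynomial of degree $\le d$, it suffices to treat the case that $M$ is $d$-small, i.e.\ a subquotient of a module generated in degree $\le d$ (grading trees by number of edges). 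A finitely generated such subquotient is in fact a subquotient of a \emph{finite} direct sum $P = \bigoplus_{j=1}^{s} P_{T_j}$ of principal projectives $P_{T_j} = k[\Hom_{\cT}(-,T_j)]$ with $|T_j|\le d$: one lifts finitely many generators of $M$ to the ambient module, and each lift is supported on finitely many projective summands, so finitely many summands suffice. Thus it is enough to control $\dim P_T(T')$ for $|T|\le d$ as $T'$ ranges over the trees obtained from the fixed tree by subdivision or sprouting.

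Next I organize the new trees into a combinatorial category. Subdividing a chosen edge into a path with $m$ internal vertices is functorial for order-preserving injections of the set of internal vertices: such an injection induces a contraction of the larger subdivided tree onto the smaller one by collapsing the extra internal edges. Subdividing $r$ chosen edges simultaneously therefore gives a functor $\Phi\colon\OI^{\times r}\to\cTop$, which lands in $\cT_l^\op$ when the fixed tree has at most $l$ leaves, since subdivision preserves the set of leaves. Sprouting --- adding $m$ new leaves at a chosen vertex --- is instead functorial for arbitrary injections of the set of new leaves, since these may be freely permuted by automorphisms; sprouting at $r'$ chosen vertices thus gives a functor $\FI^{\times r'}\to\cTop$, and doing both at once gives $\OI^{\times r}\times\FI^{\times r'}\to\cTop$ (and likewise for $\cRTop$). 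Sprouting does not preserve the number of leaves, which is exactly why the $\cT_l$ statement is restricted to subdivision. Pulling back $M$ along $\Phi$ yields a module over $\OI^{\times r}$, $\FI^{\times r'}$, or their product, whose value on a parameter tuple is $M$ evaluated at the corresponding new tree; it remains to show this pullback is finitely generated with Hilbert function eventually equal to a polynomial of degree $\le d$.

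The crux is the analysis of $\Phi^* P_T$ for $|T|\le d$. Here $\Phi^* P_T$ evaluated at a parameter tuple $\vec{m}$ is the free $k$-module on the set of contractions from the corresponding new tree onto $T$. Such a contraction collapses a spanning subforest containing all but $|T|$ of the edges, so it collapses almost all added edges; the key point is that it can fail to collapse an entire subdivided path (respectively, all the new leaves at a sprouted vertex) for at most $|T|$ of the chosen edges (respectively vertices), since distinct surviving edges map to distinct edges of $T$. For each such surviving subdivided path one records which of its edges survive --- one of finitely many combinatorial shapes, with the actual positions forming a finitely generated $\OI$-set and contributing a factor that is a polynomial of degree equal to the number of survivors; for each surviving sprouted vertex one records the subset of new leaves that survive, a finitely generated $\FI$-set contributing a factor $\binom{m_j}{k_j}$, again of degree $k_j$; everything else collapses in one of boundedly many ways. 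Summing over the finitely many combinatorial types of contraction shows that $\Phi^* P_T$ is a finitely generated $\OI^{\times r}\times\FI^{\times r'}$-module and that $\dim\Phi^* P_T(\vec m)$ agrees for large $\vec m$ with a polynomial of degree $\le|T|\le d$. Carrying out this bookkeeping carefully --- in particular verifying finite generation over the combinatorial category, not merely polynomial growth --- is the main obstacle.

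Finally I assemble the pieces. The categories $\OI^{\times r}$, $\FI^{\times r'}$, and their product are locally Noetherian, and finitely generated modules over them have Hilbert functions agreeing with a polynomial once all parameters are large, with degree bounded by the generation degree (for $\OI^{\times r}$ via the comparison with finitely generated $\Z^r$-graded modules over a polynomial ring, for $\FI^{\times r'}$ via the theory of $\FI$-module Hilbert polynomials, and similarly for the mixed case). Since $\Phi^* M$ is a subquotient of $\Phi^* P = \bigoplus_{j=1}^{s}\Phi^* P_{T_j}$, which by the previous paragraph is finitely generated with Hilbert polynomial of degree $\le d$, local Noetherianity forces $\Phi^* M$ to be finitely generated, so its Hilbert function is eventually polynomial; being pointwise bounded above by $\dim\Phi^* P(\vec m)$, a polynomial of degree $\le d$, this polynomial has degree $\le d$ as well. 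Reassembling the finite filtration from the first paragraph then gives the statement in the $d$-smallish case.
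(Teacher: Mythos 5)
Your argument is essentially sound and reaches the theorem by the same overall strategy as the paper (Theorems \ref{actual polynomial} and \ref{actual polynomial 2}): pull $M$ back along a subdivision/sprouting functor to a product of combinatorial categories, transfer finite generation, invoke the Sam--Snowden eventual-polynomiality of Hilbert functions, and get the degree bound by counting contractions onto trees with at most $d$ edges. Where you diverge is in how finite generation of the pullback is obtained. The paper proves that the subdivision and sprouting functors $\OIr\to\cTop$ (both are $\OI$-type functors there, including sprouting) have property (F) (Propositions \ref{OIrF} and \ref{OIrF2}), and then applies Theorem \ref{fg} directly to the finitely generated module $M$ (finite generation coming from Proposition \ref{smallish fg}), so no reduction to principal projectives or to the $d$-small case is needed; the degree bound is then quoted from Theorem \ref{bounded growth}. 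You instead reduce to subquotients of finitely many principal projectives, analyze the pulled-back projectives by hand, and use local Noetherianity of $\OI^{\times r}$ (and $\FI^{\times r'}$) to handle the subquotient; this works, but note two things. First, the step you flag as the ``main obstacle'' --- finite generation of $\Phi^*P_{T_j}$ rather than mere polynomial growth --- is exactly the content of property (F) here, and its proof is one line: a contraction onto a tree with at most $d$ edges must collapse some added edge once the relevant parameter exceeds a bound, and collapsing that single edge realizes a factorization through an $\OI$ (or $\FI$) transition map; so the bookkeeping you worry about is unnecessary. Second, routing sprouting through $\FI$ is an avoidable complication: the paper's sprouting functor is defined on $\OIr$, order-preserving injections suffice for the dimension statement, and using $\OI$ throughout lets you cite the same multivariate Hilbert-polynomial results as in the subdivision case instead of appealing to a mixed $\OI\times\FI$ theory. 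With those two simplifications your proof matches the paper's, and your degree-transfer argument (a nonnegative eventually-polynomial function bounded by a polynomial of total degree $d$ on the positive orthant has total degree at most $d$) is the same inference the paper draws from Theorem \ref{bounded growth}.
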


\subsection{Homology of configuration spaces}
Graph configuration spaces have been extensively studied in settings both theoretical \cite{ADK,A,KP} and applied \cite{F}.
The idea of fixing the number of points and varying the underlying graph has been explored in a number of recent works \cite{RW,R,L}.
We focus on trees and their cones, and obtain the following results.

\begin{theorem}\label{treefing}
Fix natural numbers $n$ and $i$. The $\cTop$-module
\[
T \mapsto H_i\big(\UConf_n(T); \Z\big)
\]
is $(n+i)$-small.  In particular, it is finitely generated.
\end{theorem}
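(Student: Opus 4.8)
The plan is to present $H_i(\UConf_n(T);\Z)$ as a subquotient of a single term of the reduced \'Swi\k{a}tkowski complex, regarded as a $\cTop$-module, and to bound the degree of generation of that term directly from its combinatorial basis. Since, by definition, a subquotient of a module generated in degree $\le d$ is $d$-small, this proves the theorem; finite generation then follows from Proposition~\ref{smallish fg} together with Theorem~\ref{Noetherian}.

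Recall from \cite{Sw-graphs, ADK} that the reduced \'Swi\k{a}tkowski complex of a graph $T$ has the form $\tS(T)=\big(\bigotimes_{v}\tS_v\big)\otimes\Z[E(T)]$, where $\tS_v$ is a free $\Z$-module whose basis is the empty class, in homological degree $0$, together with one ``branching'' class in homological degree $1$ for each half-edge at $v$ beyond a fixed reference half-edge, and $\Z[E(T)]$ is the polynomial ring on the edges of $T$ with each variable carrying one point. This complex is functorial with respect to contractions of graphs, hence in particular with respect to morphisms of $\cT$, and its homology in homological degree $i$ and point-grading $n$ computes $H_i(\UConf_n(T);\Z)$. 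Consequently, for fixed $n$ and $i$, the term $\tS_i(-)_n$ is a $\cTop$-module, the differentials are maps of $\cTop$-modules, and $H_i(\UConf_n(-);\Z)$ is the subquotient $\ker/\im$ formed from the two-step sequence $\tS_{i+1}(-)_n\to\tS_i(-)_n\to\tS_{i-1}(-)_n$.

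It remains to show that the $\cTop$-module $T\mapsto\tS_i(T)_n$ is generated in degree at most $n+i$. A basis element $b$ of $\tS_i(T)_n$ is the tensor of a choice of branching class at each of $i$ distinct vertices of $T$ with a degree-$(n-i)$ monomial in $\Z[E(T)]$. Let the support of $b$ be the union of the set of edges appearing in its monomial, of which there are at most $n-i$, with the set of edges incident to a half-edge occurring in one of its branching classes, of which there are at most $2i$ since a tree has no loops; thus $b$ is supported on at most $(n-i)+2i=n+i$ edges. Contracting all the remaining edges of $T$ produces a tree $T'$ with $|T'|\le n+i$ together with a contraction $c\colon T\to T'$, and I would verify from the formula for the induced map $\tS_i(T')_n\to\tS_i(T)_n$ that $b$ lies in its image: the monomial part pulls back through the inclusion $\Z[E(T')]\subseteq\Z[E(T)]$, and each branching vertex still carries both half-edges occurring in its branching class — in particular remains incident to at least two edges of $T'$ — so that class persists in the relevant local factor. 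Thus $\tS_i(T)_n$ is spanned by the images of $\tS_i(T')_n$ over trees $T'$ of size at most $n+i$, which is the desired bound.

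The delicate point is the verification that $b$ pulls back along $c$. Contracting an edge incident to a branching vertex which is not itself one of that vertex's recorded half-edges alters the local module $\tS_v$, and the induced map can mix the branching classes at the merged vertex and produce terms supported on the contracted edge; one has to check that these correction terms do not enlarge the support, so that contracting the edges outside the support of $b$, one at a time, keeps $b$ in the image at every stage. Everything else — the combinatorial description of the \'Swi\k{a}tkowski basis, the reduction to a statement about a single term of the complex, and the passage from smallness to finite generation — is routine once the functoriality of the reduced \'Swi\k{a}tkowski complex is in hand.
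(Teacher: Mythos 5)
Your strategy is the same as the paper's (present $H_i(\UConf_n(-);\Z)$ as a subquotient of the bidegree-$(i,n)$ piece of the reduced \'Swi\k{a}tkowski complex and bound its generation degree by contracting edges away from a basis element), but the key combinatorial step has a genuine gap. Your support of $b$ --- the at most $n-i$ monomial edges together with the at most $2i$ edges carried by the branching classes --- omits edges of $T$ that join two branching vertices without being among the recorded half-edges. Such an edge lies outside your support, yet contracting it (whether first, last, or as part of the composite contraction $c$) merges the two branching vertices into a single vertex $w'$ of $T'$, and then $b$ cannot lie in the image of the induced map: the pullback sends the local factor $\tS(w')$ into the tensor product of the factors over its preimage, and its image there sits in homological degree at most $1$, whereas $b$ has degree $2$ concentrated in that block. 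So the verification you defer as ``delicate'' is not merely delicate --- it fails outright in exactly this case. (When the contracted edge meets only one branching vertex the correction terms do cancel, since both half-edges in the difference $h_{j0}-h_{j1}$ acquire the same correction term, and this is the computation the paper carries out.) Note also that simply enlarging the support to include edges between branching vertices would repair the contraction argument but only prove generation in degrees $\leq (n-i)+2i+(i-1)=n+2i-1$, which is weaker than the claimed $(n+i)$-smallness.

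The paper closes this gap with a normalization step that your proposal is missing: whenever a distinguished (branching) vertex $v_j$ is adjacent to another distinguished vertex, one rewrites the generator, as a difference of two generators if necessary, so that $e(h_{j1})$ is itself an edge joining $v_j$ to a distinguished vertex. After this normalization the edges connecting distinguished vertices are absorbed into the half-edge count, and the set of edges that must be avoided (monomial edges, the $e(h_{j0})$, the $e(h_{j1})$, and edges between distinguished vertices) has size at most $r+(n-i)+i+(i-r)=n+i$. Then for $|T|>n+i$ there is always an edge whose simple contraction hits the (normalized) generator, which is what gives the $(n+i)$ bound. You need to add this rewriting argument and the corresponding overlap count to make your proof complete at the stated degree of generation.
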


\begin{theorem}\label{conetreefing}
Fix natural numbers $n$ and $i$. The $\cRTop$-module
\[
(T,v) \mapsto H_i\big(\UConf_n(\cone(T)); \Z\big)
\]
is $(n+i)$-small.  In particular, it is finitely generated.
\end{theorem}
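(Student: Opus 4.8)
The natural plan is to adapt the argument that proves Theorem~\ref{treefing}, replacing the tree $T$ throughout by its cone $\cone(T)$; since $\cone(T)$ is not itself a tree there is no evident way to invoke Theorem~\ref{treefing} as a black box, but the same machinery goes through. Recall that $H_\ast(\UConf_n(G))$ is the weight-$n$ homology of the reduced \'Swi\k{a}tkowski complex $\tS_\bullet(G)$, a complex of modules over the polynomial ring on the edges of $G$ which is functorial with respect to edge contractions. By Remark~\ref{more or less}, a contraction of rooted trees induces, after identifying the parallel cone edges that appear, a contraction of the corresponding cones, so $(T,v)\mapsto\tS_\bullet(\cone(T))$ is a chain complex of $\cRTop$-modules whose degree-$i$ homology is the module in the statement. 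It therefore suffices to prove that this $\cRTop$-module is $(n+i)$-small; for this it is enough to show that it is in fact generated in degree at most $n+i$, i.e., that every class in $H_i(\UConf_n(\cone(T)))$ is a sum of images, under the maps induced by contractions of $T$, of classes coming from cones $\cone(T')$ with $|T'|\le n+i$. Finite generation then follows from Theorem~\ref{Noetherian} (cf.\ Proposition~\ref{smallish fg}).

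To prove this I would argue by ``concentration.'' A weight-$n$ chain in $\tS_i(\cone(T))$ puts each vertex of $\cone(T)$ in the empty, the occupied, or a half-edge state, with exactly $i$ vertices in a half-edge state, and places the remaining $n-i$ particles on the other vertices and on the edges; in particular it involves at most $i$ half-edges and at most $n-i$ particle-carrying edges, while all other edges—and, in particular, all but a bounded collection of the cone edges $c_w$—are inert. The cone vertex $v_0$ is, in any chain, empty, occupied, or in a half-edge state along a single cone edge, so it too contributes boundedly. One then uses the \'Swi\k{a}tkowski differential to slide particles and half-edges toward the essential vertices they involve, representing an arbitrary homology class by a cycle supported on a sub-cone $\cone(T')$ with $T'$ obtained from $T$ by contracting its inert part, and finally bounds $|T'|\le n+i$. (When $n$ is too small for a nonzero class of homological degree $i$ to exist, the group simply vanishes, as in the tree case.) Throughout we contract only edges of $T$, never cone edges, so we stay in the image of the cone functor and the intermediate objects are genuinely cones of rooted trees.

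I expect the main obstacle to be the sharp estimate $|T'|\le n+i$: a crude count must join the active vertices into a subtree and so appears to require extra ``Steiner'' vertices, giving only a bound of the form $O(n+i)$, and one must argue—following the template of the proof of Theorem~\ref{treefing}, now with the added bookkeeping of the cone vertex $v_0$ and of the cone edges that get identified under contractions of $T$—that the representing cycle can be chosen concentrated enough that $|T'|$ really is at most $n+i$. A secondary, more technical point is to make Remark~\ref{more or less} precise enough that the reduced \'Swi\k{a}tkowski differential is honestly compatible with the maps induced on cones, so that one really does obtain a chain complex of $\cRTop$-modules rather than one that is only so up to homotopy.
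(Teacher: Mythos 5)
Your overall strategy is the right one and matches the paper's: work at the chain level with the reduced \'Swi\k{a}tkowski complex, show the $\cRTop$-module $T\mapsto \Sw(\cone(T))_{i,n}$ is generated in degrees $\leq n+i$, and deduce smallness of homology as a subquotient plus finite generation from Theorem \ref{Noetherian}. But the heart of the proof is exactly the step you defer, and your proposed route to it would not work. Your plan is to ``slide'' particles and half-edges so that a homology class is represented by a cycle supported on a subcone $\cone(T')$ with $T'$ obtained by contracting the inert part, and you correctly anticipate the obstruction: the active (distinguished) vertices need not lie near one another, so the smallest subtree containing them has extra Steiner vertices and you only get $O(n+i)$. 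The paper never needs the support to be concentrated or connected. Instead, for each monomial generator $\sigma$ of $\Sw(\cone(T))_{i,n}$ it produces a \emph{single} non-distinguished edge $e$ of $T$ (when $|T|>n+i$) and checks $\sigma=\widetilde\varphi^*\sigma'$ for the simple contraction $\varphi:T\to T/e$; iterating gives generation in degree $\leq n+i$. The sharp count of distinguished edges comes from two normalizations that are absent from your sketch: one may assume each $h_{j1}$ is the half-edge along the cone edge $e_{v_j}$ (every vertex of $T$ is adjacent to the cone point), so only the edges $e(h_{j0})$, the particle edges $e_k$, and edges joining two distinguished vertices of $T$ count; and when the cone point $p$ is itself distinguished, its two half-edges sit on cone edges $e_{w_0},e_{w_1}$, and one adds as distinguished only the edge from $w_0$ (resp.\ $w_1$) to its unique parent in the rooted order --- at most one each --- which is what keeps the total at $n+i$. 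Without these devices your count cannot close, and this is the actual content of the theorem.

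A second, related gap: your description of functoriality (``after identifying the parallel cone edges that appear, a contraction of the corresponding cones'') is not correct and cannot be repaired as stated. A contraction $\varphi:T\to T'$ induces a contraction $\pi_\varphi:\cone(T)\to G_\varphi$ where $G_\varphi$ has several parallel edges to the cone point; identifying them changes the configuration space (unlike the Orlik--Solomon or intersection homology setting of Remark \ref{simplification}, configuration spaces are not invariant under simplification). The paper instead uses the root to choose, for each vertex of $T'$, the maximal vertex of its preimage, giving an embedding $\iota^\varphi:\cone(T')\to G_\varphi$, and defines $\widetilde\varphi^*=\widetilde\pi_\varphi^*\circ\widetilde\iota^\varphi_*$. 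This is not merely a technical footnote: the verification $\sigma=\widetilde\varphi^*\sigma'$ in the case where $p$ is distinguished and the contracted edge meets $w_0$ or $w_1$ works precisely because the contracted edge goes \emph{down} from $w_0$ in the rooted order, i.e.\ the root-dependent choice in $\iota^\varphi$ is what makes the chain-level maps compatible with your intended generators. So both the definition of the $\cRTop$-module structure and the degree bound require input that the proposal does not supply.
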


\begin{remark}\label{more or less}
It may seem funny that the second module in Theorem \ref{conetreefing} is a module over $\cRTop$ rather than $\cTop$,
since the configuration space itself is not sensitive to the choice of root.  The issue is that a contraction of trees
does not quite induce a contraction of cones, and we use the choice of root to define the maps in a natural way.
This fix is not needed in the setting of the next application (see Remark \ref{simplification}), so we will again be able to work with unrooted trees.
\end{remark}

\begin{remark}
In this work we only consider \textit{unordered} configurations of points. This is done mainly because the tools we use largely derive from the paper \cite{ADK}, 
and this is the setting in which they work. It seems very likely that one can obtain analogues of Theorems \ref{treefing} and \ref{conetreefing} 
for \textit{ordered} configuration spaces. To accomplish this, the first step would be to reprove certain technical results from \cite{ADK} in the setting of ordered configuration spaces. 
\end{remark}

\subsection{Kazhdan-Lusztig coefficients}
Kazhdan-Lusztig polynomials of matroids were introduced in \cite{EPW}, and are in many ways
analogous to the Kazhdan-Lusztig polynomials that appear in Lie theory \cite[Section 2.5]{KLS}.
If a matroid comes from a graph (or more generally from a hyperplane arrangement),
the coefficient of $t^i$ in its Kazhdan-Lusztig polynomial is equal to the dimension of the $\tith$ intersection homology
group of the reciprocal plane $X_G$ \cite{EPW}.  

Kazhdan-Lusztig coefficients of graphical matroids have been the subject of many recent papers \cite{PWY,thag,fs-braid,fan-wheel-whirl},
with only a small number of special families being explicitly understood.
An interesting special case is the thagomizer graph, which is the cone over the tree $K_{m,1}$.
The Kazhdan-Lusztig coefficients of this graph grow faster than any polynomial \cite{thag}, so Theorem \ref{bounded growth}
tells us that the corresponding module over $\cTop$ cannot be finitely generated.  The problem goes away, however, if we restrict our attention
to trees with a bounded number of leaves.  The following theorem is proved in Section \ref{sec:kl-thm}.

\begin{theorem}\label{IH-small}
Fix natural numbers $l$ and $i$.
The $\C$-linear $\cT_l^\op$-module
\[
T \mapsto \IH_{2i}\big(X_{\cone(T)}\big)
\]
is $(2i+l-2)$-smallish.  In particular, it is finitely generated.
\end{theorem}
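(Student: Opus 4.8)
The plan is to combine the theorem of \cite{EPW} identifying $\dim_{\C}\IH_{2i}(X_{\cone(T)})$ with the coefficient of $t^i$ in the Kazhdan--Lusztig polynomial of the graphical matroid of $\cone(T)$, together with the geometric description of the structure maps from the introduction (which, by Remark \ref{simplification}, are defined honestly on unrooted trees), and a recursion that controls the reciprocal plane of $\cone(T)$ as $T$ is built up. Write $M$ for the $\cT_l^\op$-module $T\mapsto\IH_{2i}(X_{\cone(T)})$. Since $\Rep_\C(\cT_l^\op)$ is locally Noetherian (Theorem \ref{Noetherian}) and $d$-smallish modules are finitely generated (Proposition \ref{smallish fg}), it is enough to produce a \emph{finite} filtration of $M$ whose associated graded pieces are $(2i+l-2)$-small, that is, subquotients of $\cT_l^\op$-modules generated in degree at most $2i+l-2$. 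That we only obtain ``smallish'' and not ``small'' reflects the fact that the recursion below is clean only at the level of the associated graded.

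The key structural observation is that $\cone(T)$ is a chordal graph: every cycle of $\cone(T)$ passes through the cone vertex $w$, and if it has length $\geq 4$ it is chorded by an edge from $w$. Consequently each leaf $v$ of $T$, with neighbour $u$, is a simplicial vertex of $\cone(T)$, and $\cone(T)$ is obtained from $\cone(T-v)$ by attaching a triangle along the edge $uw$ (equivalently, by adding an element parallel to $uw$ and then subdividing it). The reciprocal plane, and its intersection cohomology, behaves predictably under this operation and under subdivision of an edge of $T$ (collapsing a path of degree-$2$ vertices); moreover these operations are compatible with contractions of trees (this is where the ``more or less'' of Remark \ref{more or less} is used), so that iterating leaf-removal and path-collapse expresses $\IH_{2i}(X_{\cone(T)})$, functorially in $T$, in terms of the analogous groups for the finitely many trees with at most $l$ leaves and no vertex of degree $2$. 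Alternatively one can run the recursion more geometrically, via the stratification of $X_{\cone(T)}$ by the flats of the matroid of $\cone(T)$ and the spectral sequence it induces on intersection homology, which refines the defining recursion of Kazhdan--Lusztig polynomials in \cite{EPW}; either way, for a fixed degree $2i$ only finitely many strata contribute (by the degree bound on Kazhdan--Lusztig polynomials), so the resulting filtration of $M$ is finite.

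To control the degrees of the pieces, recall that there are only finitely many trees with at most $l$ leaves and no vertex of degree $2$, and that such a reduced tree has at most $l-2$ vertices of degree $\geq 3$; once the paths of degree-$2$ vertices are collapsed it therefore has at most $l-2$ edges recording its branching. A piece of the associated graded of $M(T)$ is the image, under the map induced by a contraction of $T$ onto the tree obtained from $T$ by collapsing every maximal path of degree-$2$ vertices that is not ``used'' by that piece, of a class in an analogous module of homological degree at most $2i$; that smaller tree has at most $l-2$ edges of branching together with at most $2i$ further edges needed to carry a class in degree $2i$, hence at most $2i+l-2$ edges in all. So every piece is a subquotient of a module generated in degree $\leq 2i+l-2$, which is exactly $(2i+l-2)$-smallness. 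Proposition \ref{smallish fg} then gives finite generation, and Theorem \ref{bounded growth} recovers the statement that, for fixed $l$, these Kazhdan--Lusztig coefficients grow at most polynomially in $|T|$ --- in contrast with the thagomizer graphs $\cone(K_{m,1})$ of \cite{thag}, whose defining trees have an unbounded number of leaves.

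The main obstacle is the recursion itself: pinning down the exact behaviour of the intersection cohomology of the reciprocal plane under attaching a parallel triangle and under subdividing an edge of $T$, and --- harder --- verifying that these identifications are strictly natural for contractions of trees, given that $\cone(T)\to\cone(T')$ is not literally induced by a contraction $T\to T'$ and that contracting a flat of the matroid of $\cone(T)$ can produce cones over graphs with cycles and with parallel elements rather than cones over trees. This is exactly the point where the rooted-tree bookkeeping (or the simplification of Remark \ref{simplification}) is needed to rigidify the maps, and where one must check that the recursion, suitably organized, never leaves a class of modules controlled by trees with at most $l$ leaves. A cleaner packaging, if it can be made to work, is to fix a reduced tree $R$ and study the restriction of $M$ to the subdivisions of $R$ directly, which is a module over a product of copies of $\OI$ indexed by the edges of $R$; Theorem \ref{Noetherian} makes such modules finitely generated, and the remaining task is to bound the degrees of their generators --- which, once more, comes down to the behaviour of $\IH_*$ of reciprocal planes of cones under subdivision.
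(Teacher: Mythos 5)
There is a genuine gap here: your central mechanism is never established, and you say so yourself. The claim that $\IH_*$ of the reciprocal plane of $\cone(T)$ obeys a clean recursion under removing a simplicial leaf (``attaching a parallel triangle'') and under collapsing degree-$2$ paths is not something you prove or can cite; Kazhdan--Lusztig coefficients of matroids have no simple deletion--contraction or series/parallel recursion, and the defining recursion of \cite{EPW} runs over \emph{all} flats, not just those seen by leaf-removal. Making any such identification natural for the (non-obvious) contraction maps is, as you note, ``the main obstacle,'' so the argument as written is a plan rather than a proof. Your fallback --- the spectral sequence attached to the stratification by flats --- is in fact the route the paper takes (Theorem \ref{spectral}, which is already functorial in contractions), but none of the work that makes it yield the bound $2i+l-2$ is done in your sketch: one needs (i) the combinatorial parametrization of flats of $\cone(T)$ by triples $(R,W,\underline{U})$ with $\underline{U}\in\Comp_R(T)$ and $W\subset\Vert(R)$ groovy (Section \ref{sec:flats}); (ii) $d$-smallness of the Orlik--Solomon modules $\OS_d$ and $\OS_d^{\cone}$ (Proposition \ref{OSi}) together with the gluing Lemma \ref{putting them together}, which shows each summand $N_{(R,W)}$ of the $E^1$ page is $(|R|+2i-p-q)$-small; (iii) the leaf estimate $|R|+|R_W|\le 2|W|+l-2$ (Lemma \ref{leaves} and Corollary \ref{not so fast, buster}), which is what makes the sum over $R$ finite on $\cT_l^\op$ and controls $|R|$; and (iv) the vanishing $\IH_{2r}(X_{\cone(R_j)})=0$ for $2r>|R_j|$ from \cite[Proposition 3.4]{EPW}, which converts that estimate into the inequality $|R|+2i-p-q\le 2i+l-2$. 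Your substitute for (ii)--(iv) is a heuristic edge count (``$l-2$ edges of branching plus $2i$ edges to carry the class'') that happens to land on the right number but does not bound the generation degree of any identified subquotient.

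Two further points. The ``smallish vs.\ small'' issue is resolved in the paper not by any recursion being ``clean only at the associated graded level'' but simply because a first-quadrant spectral sequence converging to $\IH_{2i}^{\cone}$ gives a finite filtration whose associated graded is a subquotient of the $E^1$ page, so smallness of $E^1$ yields smallishness of the limit; that is the statement you need to make precise. Finally, your closing ``cleaner packaging'' misuses Noetherianity: Theorem \ref{Noetherian} says submodules of finitely generated modules are finitely generated, it does not make the restriction of $M$ along a subdivision functor $\OI^{\times r}\to\cT_l^\op$ finitely generated --- that conclusion (Theorem \ref{fg} plus Proposition \ref{OIrF}) requires first knowing $M$ itself is finitely generated, which is exactly what is being proved.
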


\subsection{Future work}
In a future paper, we will prove analogous results for the full subcategory of $\cG$ consisting of connected graphs with fixed Euler characteristic.  
Despite the fact that trees form a special case (Euler characteristic equal to 1), 
it is a case made richer by the notion of rooted trees, which allows us to work with cones (see Remark \ref{more or less}).

\vspace{\baselineskip}
\noindent
{\em Acknowledgments:}
NP is supported by NSF grant DMS-1565036.  ER is supported
by NSF grant DMS-1704811. The second author would also like to send thanks to Daniel L\"utgehetmann for various discussions related to Theorem \ref{treefing} prior to this work.

\section{Tree categories}\label{sec:tree-cats}
The goal of this section is to give precise definitions of the tree categories $\cT$, $\cRT$, and $\cT_l$;
to prove that their contravariant module categories are locally Noetherian; and to study the dimension growth
of finitely generated modules.

\subsection{Gr\"obner categories and Noetherianity}
Fix a Noetherian commutative ring $k$.  Given an essentially small category $\cC$, we will be interested in the Abelian
category $\Rep_k(\cC)$ of covariant functors from $\cC$ to the category of $k$-modules.
Such a functor will be called a {\bf \boldmath{$\cC$}-module}.
If $x$ is an object of $\cC$, we define the {\bf principal projective} $\cC$-module $P_x\in\Rep_k(\cC)$
by letting $P_x(y)$ be the free $k$-module with basis $\Hom_\cC(x,y)$ and defining morphisms by composition.  
An arbitrary $\cC$-module $M$ is called {\bf finitely generated} if it admits a surjection from a direct sum
of finitely many principal projectives.  The category $\Rep_k(\cC)$ is called {\bf locally Noetherian} if every submodule
of a finitely generated $\cC$-module is itself finitely generated.

Sam and Snowden introduce the notions of Gr\"obner categories and quasi-Gr\"obner categories
as a means of proving that the corresponding module categories are locally Noetherian \cite{sam}.
Let $\cC$ be an essentially small category.  For any object $x$ of $\cC$, let $\cC_x$ denote the set of all morphisms in $\cC$
with domain $x$.  This set is equipped with a partial order by putting 
$$f \leq g \iff g = h \circ f \text{ for some morphism $h$.}$$
The category $\cC$ is said to have \textbf{property (G2)} if, for all objects $x$, the poset $\cC_x$ is Noetherian,
which means that all descending chains stabilize and there are no infinite anti-chains.
The category $\cC$ is said to have \textbf{property (G1)} if, for all objects $x$, there exists some linear order 
$\preccurlyeq$ on $\cC_x$ such that, for all monomials $f,g \in \cC_x$ with the same target $y$
and all morphisms $h\in\cC_y$,
$$f \preccurlyeq g \implies h\circ f \preccurlyeq h \circ g.$$
The category $\cC$ is \textbf{Gr\"obner} if it has properties (G1) and (G2) and objects of $\cC$
have no nontrivial endomorphisms.

Let $\cC$ and $\cC'$ be essentially small categories.  A functor $\Phi: \cC \rightarrow \cC'$ is said to have \textbf{property (F)}
if, given any object $x$ of $\cC'$, there exist finitely many objects $y_1, \ldots, y_n$ of $\cC$
and morphisms $f_i : x \rightarrow \Phi(y_i)$ in $\cC'$ such that for any object $y$ of $\cC$ and any morphism 
$f : x \rightarrow \Phi(y)$ in $\cC'$, there exists a morphism $g : y_i \rightarrow y$ in $\cC$ such that $f = \Phi(g) \circ f_i$.
An essentially small category $\cC'$ is \textbf{quasi-Gr\"obner} if there exists a Gr\"obner category $\cC$ and an essentially surjective
functor $\Phi: \cC \rightarrow \cC'$ with property (F).

\begin{remark}\label{prop-F}
It is easy to see that property (F) is closed under composition \cite[Proposition 3.2.6]{sam}.
Thus, if $\cC$ is quasi-Gr\"obner and $\Phi: \cC \rightarrow \cC'$ has property (F), then $\cC'$
is also quasi-Gr\"obner.
\end{remark}

The motivation for these definitions comes from the following two theorems, both of which of fundamental
importance in our work.

\begin{theorem}\label{fg}{\em \cite[Proposition 3.2.3]{sam}}
If $\Phi: \cC \rightarrow \cC'$ has property (F) and $M$ is a finitely generated $\cC'$-module,
then $\Phi^*M$ is a finitely generated $\cC$-module.
\end{theorem}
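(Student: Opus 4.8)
The plan is to reduce immediately to the case of a principal projective and then to recognize that property (F) is, essentially by the Yoneda lemma, exactly the surjectivity statement we need; no Gr\"obner or Noetherianity input enters.

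First I would record that the pullback functor $\Phi^*\colon\Rep_k(\cC')\to\Rep_k(\cC)$ is exact. Indeed, $\Phi^*$ is precomposition with $\Phi$, and exactness (in particular, sending surjections to surjections) is checked objectwise, where $(\Phi^*N)(c)=N(\Phi(c))$. So if $M$ is finitely generated, pick a surjection $\bigoplus_{j=1}^m P_{x_j}\twoheadrightarrow M$ with each $x_j$ an object of $\cC'$; applying $\Phi^*$ yields a surjection $\bigoplus_{j=1}^m\Phi^*P_{x_j}\twoheadrightarrow\Phi^*M$. A quotient of a finite direct sum of finitely generated $\cC$-modules is finitely generated, so it suffices to show that $\Phi^*P_x$ is finitely generated for every object $x$ of $\cC'$.

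Next I would unwind the definitions. For an object $c$ of $\cC$, $(\Phi^*P_x)(c)=P_x(\Phi(c))$ is the free $k$-module on $\Hom_{\cC'}(x,\Phi(c))$. Apply property (F) to $x$: it produces objects $y_1,\dots,y_n$ of $\cC$ and morphisms $f_i\colon x\to\Phi(y_i)$. Via Yoneda, the basis element $f_i\in(\Phi^*P_x)(y_i)$ corresponds to a morphism of $\cC$-modules $\phi_i\colon P_{y_i}\to\Phi^*P_x$ sending $\id_{y_i}$ to $f_i$; explicitly, a morphism $g\colon y_i\to y$ in $\cC$, regarded as a basis element of $P_{y_i}(y)$, is sent to $\Phi(g)\circ f_i\in(\Phi^*P_x)(y)$. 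Summing the $\phi_i$ gives $\phi\colon\bigoplus_{i=1}^n P_{y_i}\to\Phi^*P_x$, whose image at an object $y$ is the $k$-span of the set $\{\Phi(g)\circ f_i : 1\le i\le n,\ g\in\Hom_\cC(y_i,y)\}$. The conclusion of property (F) states precisely that every morphism $x\to\Phi(y)$ in $\cC'$ is of this form, i.e.\ these elements are all the basis vectors of $(\Phi^*P_x)(y)$. Hence $\phi$ is objectwise surjective, so surjective, and $\Phi^*P_x$ is finitely generated, as needed.

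The only point requiring care is the bookkeeping in the Yoneda step — verifying that the module map attached to $f_i$ really carries $g$ to $\Phi(g)\circ f_i$, so that property (F) becomes literally the statement that $\phi$ is onto. This is routine; there is no genuine obstacle, and in particular the argument uses finite generation of $M$ only through the existence of a finite generating set, with no appeal to properties (G1), (G2), or local Noetherianity.
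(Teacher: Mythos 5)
Your argument is correct: the reduction to principal projectives via exactness of $\Phi^*$, the Yoneda construction of $\phi_i\colon P_{y_i}\to\Phi^*P_x$ sending $g$ to $\Phi(g)\circ f_i$, and the observation that property (F) is literally objectwise surjectivity of $\bigoplus_i P_{y_i}\to\Phi^*P_x$ all check out, and you are right that no Gr\"obner or Noetherian hypotheses are needed. The paper itself gives no proof of this statement, quoting it from Sam--Snowden \cite[Proposition 3.2.3]{sam}, and your argument is essentially the standard proof given there, so there is nothing to add.
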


\begin{theorem}\label{qgn}{\em \cite[Theorem 1.1.3]{sam}}
If $\cC$ is quasi-Gr\"obner, then the module category $\Rep_k(\cC)$ is locally Noetherian.
\end{theorem}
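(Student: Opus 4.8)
The plan is to prove the statement in two stages: first for Gr\"obner categories directly, then to bootstrap to the quasi-Gr\"obner case using property (F) and Theorem \ref{fg}.

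\emph{Stage 1: the Gr\"obner case.} Suppose $\cC$ is Gr\"obner. Since a finitely generated $\cC$-module $M$ admits a surjection from a finite direct sum $P=\bigoplus_{a=1}^r P_{x_a}$ of principal projectives, and the preimage in $P$ of a submodule $N\subseteq M$ is again a submodule of $P$ (with $N$ finitely generated if the preimage is), it suffices to show that every sub-$\cC$-module of such a $P$ is finitely generated. For this I would develop a theory of Gr\"obner bases. Call a \emph{monomial} a pair consisting of an index $a$ and a morphism $f\colon x_a\to y$; for each object $y$ the $k$-module $P(y)$ is free on the monomials with target $y$, and post-composition makes the set of all monomials into a poset, $m\le m'$ iff $m'=h\cdot m$ for some morphism $h$. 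Property (G2), applied to each $\cC_{x_a}$ and then to the finite disjoint union, makes this poset Noetherian, while property (G1) furnishes, on the monomials with a fixed target, a linear order preserved by post-composition (order first by the index $a$, then within each component by the order supplied by (G1)). Given $0\ne v\in P(y)$, let its leading monomial $\operatorname{in}(v)$ be the largest monomial occurring in $v$ and its leading coefficient the corresponding element of $k$. For a submodule $N$, record for each monomial $m$ the $k$-submodule of $k$ spanned by the leading coefficients of elements of $N$ with leading monomial $m$; this ``initial submodule'' is closed under the $\cC$-action, the set of monomials carrying nonzero data is an up-set in a Noetherian poset (hence the up-closure of finitely many minimal elements), and above each minimal monomial the coefficient data is controlled because $k$ is Noetherian. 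Choosing finitely many $v_1,\dots,v_s\in N$ realizing this initial data, a division-algorithm argument --- repeatedly subtracting scalar multiples of $\cC$-translates of the $v_j$ to cancel leading terms, with termination ensured by the Noetherianity in (G2) --- shows that the $v_j$ generate $N$. Hence $\Rep_k(\cC)$ is locally Noetherian.

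\emph{Stage 2: the quasi-Gr\"obner case.} Now let $\cC'$ be quasi-Gr\"obner, with $\cC$ a Gr\"obner category and $\Phi\colon\cC\to\cC'$ an essentially surjective functor with property (F). Let $M$ be a finitely generated $\cC'$-module and $N\subseteq M$ a submodule. The pullback $\Phi^*$ is exact (it is just precomposition with $\Phi$), so $\Phi^*N\subseteq\Phi^*M$. By Theorem \ref{fg}, $\Phi^*M$ is a finitely generated $\cC$-module, and by Stage 1 the category $\Rep_k(\cC)$ is locally Noetherian, so $\Phi^*N$ is finitely generated. It remains to observe that pullback along any essentially surjective functor reflects finite generation: if $\Phi^*N$ is generated by elements $v_1,\dots,v_t$ with $v_j\in N(\Phi(y_j))$, then for an arbitrary object $x'$ of $\cC'$ and $w\in N(x')$, pick an isomorphism $\alpha\colon x'\to\Phi(y)$, express $N(\alpha)(w)\in(\Phi^*N)(y)$ as a $k$-linear combination of terms $(\Phi^*N)(g)(v_j)=N(\Phi(g))(v_j)$ with $g$ a morphism of $\cC$, and apply $N(\alpha^{-1})$ to write $w$ as a $k$-linear combination of terms $N(\alpha^{-1}\circ\Phi(g))(v_j)$; since $\alpha^{-1}\circ\Phi(g)$ is a morphism of $\cC'$ out of $\Phi(y_j)$, this exhibits $w$ in the $\cC'$-submodule generated by $v_1,\dots,v_t$. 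Hence $N$ is finitely generated, and $\Rep_k(\cC')$ is locally Noetherian.

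I expect the main obstacle to be Stage 1: setting up the Gr\"obner-basis machinery carefully --- in particular the definition of the initial submodule, the proof that it is finitely generated, and the termination of the division algorithm --- and doing so over a general Noetherian coefficient ring $k$ rather than a field, where one must track ideals of leading coefficients instead of merely rescaling them. Stage 2, by contrast, is essentially formal once Theorem \ref{fg} and the Gr\"obner case are available.
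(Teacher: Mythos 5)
The paper does not actually prove this statement: it is quoted verbatim from Sam--Snowden \cite[Theorem 1.1.3]{sam}, so the only ``proof'' in the paper is the citation. Your two-stage architecture is in fact the route taken in that source: first the Gr\"obner case by a Gr\"obner-basis/initial-term argument, then transfer to the quasi-Gr\"obner case along an essentially surjective functor with property (F). Your Stage 2 is complete and correct as written: exactness of $\Phi^*$, Theorem \ref{fg} to get finite generation of $\Phi^*M$, local Noetherianity of $\Rep_k(\cC)$ for the Gr\"obner source category, and the verification that $\Phi^*$ reflects finite generation when $\Phi$ is essentially surjective (transporting generators through an isomorphism $x'\cong\Phi(y)$) is exactly the standard argument.

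Stage 1, however, is where the substance of the theorem lies, and there you give a plan rather than a proof; the two steps you defer are precisely the nontrivial ones, and as sketched they do not yet go through. First, over a general Noetherian $k$ the finiteness of the ``initial data'' is not a consequence of ``the support is an up-set with finitely many minimal elements, plus $k$ Noetherian'': the leading-coefficient ideal can genuinely vary over the infinitely many monomials lying above a single minimal one, and what is needed is an ascending chain condition for monotone ideal-valued functions on a Noetherian poset (equivalently, for monomial submodules with coefficients); this is a real lemma requiring its own inductive or minimal-bad-sequence argument, and it is the heart of the Sam--Snowden proof. Second, termination of your division algorithm is attributed to (G2), but each reduction step decreases the leading monomial only in the linear order $\preccurlyeq$, not in the divisibility partial order, so Noetherianity of the poset does not help there; you need $\preccurlyeq$ to be well-founded, which is why the cited source requires its admissible orders to be well-orders --- the bare linear order in property (G1) as stated would not suffice for your termination claim. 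A smaller issue of the same kind: to see that the initial data is stable under the $\cC$-action you implicitly assume that post-composition by $h$ cannot merge two distinct monomials of an element (otherwise the leading coefficient of $h\cdot v$ need not be that of $v$); with only $f\preccurlyeq g\Rightarrow h\circ f\preccurlyeq h\circ g$ available, this needs justification. So your reduction to the Gr\"obner case and your transfer argument are sound, but Stage 1 as written has genuine gaps at exactly the points the cited theorem exists to settle.
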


\subsection{Trees, rooted and otherwise}\label{sec:trees}
A {\bf graph} is a finite CW complex of dimension at most 1.  
The 0-cells are called {\bf vertices} and the 1-cells are called {\bf edges}.
If $f:G\to G'$ is a map of CW complexes, we say that $f$ is {\bf very cellular} if it takes every vertex to a vertex
and every edge to either a vertex or an edge.  An edge that maps to a vertex will be called a {\bf contracted edge}.
If $G$ and $G'$ are graphs, we define a {\bf graph morphism} from $G$ to $G'$ to be an equivalence class of very cellular maps,
where two very cellular maps are equivalent if and only if they are homotopic through very cellular maps.\footnote{If we are content
to work with graphs without loops, then we could equivalently define a graph to be a simplicial complex of dimension at most 1
and a graph morphism to be a simplicial map.  We will not care about loops in this paper, but they will be necessary in the sequel paper,
so we are giving this more general definition here.}
We note that a graph morphism $\varphi:G\to G'$ induces a well defined map on vertex sets, and it also makes sense to talk
about the set of edges that are contracted by $\varphi$.
We say that a graph morphism is a {\bf contraction} if it may be represented by a very cellular map that is
a surjective homotopy equivalence with connected fibers.

A {\bf tree} is a graph that admits a contraction to a point.
If $T$ and $T'$ are trees, a contraction from $T$ to $T'$ is uniquely determined by the induced map on vertices,
which can be any map with the property that the preimage of any vertex in $T'$ is equal to the set of vertices of a subtree of $T$.
A {\bf rooted tree} is a tree along with a choice of vertex.
Let $\cT$ be the category of trees with contractions, and let $\cRT$ be the category of rooted trees with contractions that preserve the root.
We have a forgetful functor $\Phi:\cRT\to\cT$.

Any rooted tree has a natural partial order on its vertex set, where the root is maximal,
and more generally $v\leq w$ if and only if the unique path from $v$ to the root passes through $w$.
Barter \cite{Barter} studies the category $\RTi$ whose objects are rooted trees and whose morphisms
are pointed order embeddings of vertex sets.

\begin{proposition}\label{opposite}
The category $\cRTop$ is equivalent to $\RTi$.
\end{proposition}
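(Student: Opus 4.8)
The plan is to construct an explicit functor realizing the equivalence $\cRTop \simeq \RTi$ and to check it is fully faithful and essentially surjective. On objects, both categories have the same objects (rooted trees), so the functor is the identity on objects; the content is entirely in what happens to morphisms. First I would unwind the definition of a morphism in $\cRTop$: this is a contraction $\varphi: T' \to T$ of rooted trees preserving the root (the direction reversed because we are in the opposite category), which by the discussion just before Proposition \ref{opposite} is uniquely determined by the induced map on vertex sets $V(T') \to V(T)$, subject to the condition that the preimage of every vertex of $T$ is the vertex set of a subtree of $T'$. I would then observe that such a map, read backwards, should correspond to a pointed order embedding $V(T) \hookrightarrow V(T')$ in Barter's category $\RTi$. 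The natural candidate for the embedding is the map sending a vertex $w \in V(T)$ to a canonically chosen vertex in its fiber $\varphi^{-1}(w)$ — namely the \emph{maximal} vertex of that fiber with respect to the rooted partial order on $T'$ (the one closest to the root). This is well defined because each fiber is a subtree, and a subtree has a unique vertex closest to the root.

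The key steps, in order, are: (1) verify that $w \maps\max \varphi^{-1}(w)$ is an order embedding $V(T) \to V(T')$ preserving the root — root-preservation is immediate since $\varphi^{-1}(\text{root of }T)$ contains the root of $T'$ and the root is maximal, and order-preservation/reflection uses that contracting a connected subtree does not disturb the ancestor relation between vertices lying in distinct fibers; (2) conversely, given a pointed order embedding $\iota: V(T) \hookrightarrow V(T')$, construct the contraction: for each vertex $u \in V(T')$, send it to the image under $\varphi$ determined by ``which region of $T'$ cut out by $\iota(V(T))$ it falls into,'' and check the fibers are subtrees; (3) check these two assignments are mutually inverse and compatible with composition, so that we genuinely have a pair of functors inverse to each other. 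Functoriality of the first assignment requires checking that if $\varphi: T'' \to T'$ and $\psi: T' \to T$ are composable contractions then $\max(\psi\varphi)^{-1}(w) = \max \varphi^{-1}(\max \psi^{-1}(w))$, which follows from the fact that $\max$ interacts well with nested subtrees.

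The step I expect to be the main obstacle is step (2), reconstructing the contraction from an abstract pointed order embedding and verifying its fibers are subtrees. Given $\iota: V(T) \hookrightarrow V(T')$, one must define, for each $u \in V(T')$, the vertex $\varphi(u) \in V(T)$ by taking the largest $w \in V(T)$ with $\iota(w) \le u$ — but one has to argue such a $w$ exists and is unique (existence because $\iota$ preserves the root, which dominates $u$; uniqueness because the image of $\iota$ is totally ordered below any given vertex, using that $T'$ is a tree so the ancestors of $u$ form a chain), and then show $\varphi^{-1}(w) = \{u : \iota(w) \le u \text{ and } \iota(w') \not\le u \text{ for } w' > w\}$ is connected. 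Connectivity is where the order-embedding hypothesis does real work: if $u$ is in this set, the whole path from $u$ up to $\iota(w)$ must also be in it, because any vertex on that path lies below $u$ and above $\iota(w)$, and cannot lie above any $\iota(w')$ with $w' > w$ (else $u$ would too). Once the fibers are known to be subtrees, the very-cellular contracted-edge structure and the homotopy-equivalence condition are automatic from the earlier characterization of tree contractions, and the verification that this is inverse to the $\max$ construction is a short check. I would keep the write-up at the level of these structural observations rather than spelling out every poset inequality.
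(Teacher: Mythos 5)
Your forward construction agrees with the paper: given a contraction, send each vertex of the target to the \emph{maximal} vertex of its fiber (well defined because each fiber is a subtree, hence has a unique vertex closest to the root). The gap is in your step (2), the inverse construction. You propose to send a vertex $u$ of the big tree $T'$ to the \emph{largest} $w$ with $\iota(w)\le u$, i.e.\ to look at image vertices lying \emph{below} $u$. This fails in two ways. First, such a $w$ need not exist: if $T'$ is a root $r$ with two children $a,b$, $T$ is a root with one child, and $\iota$ sends the child to $a$, then no image vertex lies weakly below $b$, so your recipe is undefined at $b$ (your existence argument is backwards: root-preservation gives an image vertex \emph{above} $u$, not below). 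Likewise uniqueness fails, since the image vertices below $u$ are among its descendants, which do not form a chain -- your appeal to ``the ancestors of $u$ form a chain'' is the right fact for the opposite recipe. Second, even when your recipe is everywhere defined it is not inverse to the max-of-fiber map: take $T'$ a path $r>a>b$, $T$ a path $\rho>c$, and $\iota(\rho)=r$, $\iota(c)=b$. Your recipe sends $a$ and $b$ to $c$ and $r$ to $\rho$, so the fiber of $c$ is $\{a,b\}$, whose maximal vertex is $a\ne b=\iota(c)$; the round trip does not return $\iota$.

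The correct inverse, which is what the paper uses, looks \emph{upward}: send each vertex $u$ of the big tree to the \emph{minimal} vertex $w$ of the small tree with $\iota(w)\ge u$. Existence holds because the image of the root lies weakly above every $u$, and uniqueness holds because the image vertices weakly above $u$ sit among the ancestors of $u$, which form a chain, and $\iota$ reflects the order. With this recipe the fiber of $w$ is the set of vertices lying weakly below $\iota(w)$ but not weakly below $\iota(w')$ for any $w'<w$, which is connected by the path-to-$\iota(w)$ argument you sketched (run in the upward direction), and the two constructions are then genuinely mutually inverse and compatible with composition. So your overall strategy is the paper's, but the inverse functor must be repaired as above before steps (2) and (3) go through.
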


\begin{proof}
Let $(T,v)$ and $(T',v')$ be rooted trees.  
Given a contraction $\varphi:(T,v)\to (T',v')$ in $\cRT$, we construct a morphism $\varphi^*:(T',v')\to (T,v)$ in $\RTi$
by sending each vertex of $T'$ to the maximal vertex in its preimage.  Conversely,
given a morphism $\psi:(T',v')\to (T,v)$ in $\RTi$, we construct a contraction
$\psi^*:(T,v)\to (T',v')$ in $\cRT$ that sends each vertex $w$ of $T$ to the minimal vertex of $T'$ whose image under $\psi$ lies weakly above $w$.
It is easy to see that $\varphi^{**} = \varphi$ and $\psi^{**} = \psi$, thus these two constructions are mutually inverse.
\end{proof}

\begin{example}\label{Barter}
The following illustration depicts a morphism in the category $\cRT$ alongside the corresponding morphism in the category $\RTi$.
The fat vertices represent the roots.
\begin{figure}[ht]
\begin{center}
\includegraphics[width=5cm]{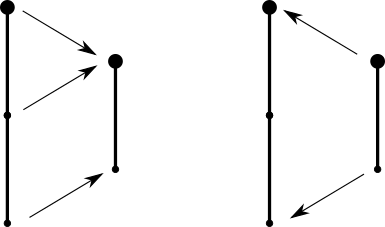}
\end{center}
% \caption{A map from $(T,v)$ to $(T',v')$ in the category $\cRT$ and the corresponding map from $(T',v')$ to $(T,v)$ in the category $\RTi$.}
\end{figure}
\end{example}

\begin{corollary}\label{cRTop-qg}
The categories $\cRTop$ and $\cTop$ are both quasi-Gr\"obner.
\end{corollary}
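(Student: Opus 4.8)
The plan is to deduce both statements from Proposition \ref{opposite}, from Barter's work, and from the property (F) formalism of Section 2.1. For $\cRTop$: by Proposition \ref{opposite} it is equivalent to Barter's category $\RTi$, so it suffices to know that $\RTi$ is quasi-Gr\"obner, which is precisely the content of Barter's theorem in \cite{Barter} (his argument exhibits a Gr\"obner category together with an essentially surjective functor onto $\RTi$ having property (F)).

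For $\cTop$, I would transport quasi-Gr\"obnerness across the opposite of the forgetful functor $\Phi\colon\cRT\to\cT$ by means of Remark \ref{prop-F}. First note that $\Phi^{\op}\colon\cRTop\to\cTop$ is essentially surjective, since any tree becomes a rooted tree once we choose a vertex. It then remains to check that $\Phi^{\op}$ has property (F). Fix a tree $T$, viewed as an object of $\cTop$. Take the finite family of rooted trees $(T,v)$ indexed by $v\in\Vert(T)$, each equipped with the structure morphism $\id_T\colon T\to\Phi^{\op}(T,v)=T$ of $\cTop$. Given any rooted tree $(T',v')$ and any morphism $f\colon T\to\Phi^{\op}(T',v')=T'$ of $\cTop$ --- that is, a contraction $c\colon T'\to T$ in $\cT$ --- set $v:=c(v')\in\Vert(T)$. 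Then $c$ is root-preserving as a map $(T',v')\to(T,v)$, hence determines a morphism $g\colon(T,v)\to(T',v')$ in $\cRTop$ satisfying $\Phi^{\op}(g)\circ\id_T=\Phi^{\op}(g)=f$. This is exactly property (F), so by Remark \ref{prop-F} (together with the essential surjectivity just observed) $\cTop$ is quasi-Gr\"obner.

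The only genuinely substantive ingredient is the quasi-Gr\"obnerness of $\RTi$, i.e.\ Barter's theorem; everything else is an unwinding of definitions, and I anticipate no real difficulty. If one wanted a self-contained treatment, the hard part would be reproving Barter's result --- producing a Gr\"obner category that surjects onto $\RTi$ with property (F) --- which ultimately comes down to a Higman/Kruskal-type well-quasi-ordering statement for rooted trees under order embeddings.
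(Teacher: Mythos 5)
Your proposal is correct and follows essentially the same route as the paper: quasi-Gr\"obnerness of $\cRTop$ via Proposition \ref{opposite} and Barter's theorem, then transport to $\cTop$ along $\Phi^{\op}$ using Remark \ref{prop-F}. Your explicit verification of property (F) (choosing the rooted trees $(T,v)$ with $v=c(v')$) just spells out what the paper summarizes by saying the forgetful functor is surjective on objects and morphisms.
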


\begin{proof}
Barter proves that $\RTi$ is quasi-Gr\"obner \cite[Theorem 5]{Barter}, thus so is $\cRTop$.
The forgetful functor $\Phi^\op:\cRTop\to\cTop$ is surjective on both objects and morphisms, 
and therefore has property (F).  It follows from Remark \ref{prop-F} that $\cTop$ is also quasi-Gr\"obner.
\end{proof}

\begin{proof}[Proof of Theorem \ref{Noetherian}]
The fact that the categories $\Rep_k(\cRTop)$ and $\Rep_k(\cTop)$ are locally Noetherian
follows from Theorem \ref{qgn} Corollary \ref{cRTop-qg}.
The category $\Rep_k(\cT_l^\op)$ can be identified with the full subcategory of $\Rep_k(\cTop)$
consisting of modules that evaluate to zero on any tree with more than $l$ leaves, and a module 
in $\Rep_k(\cT_l^\op)$ is finitely generated over $\cT_l^\op$ if and only if it is finitely generated over $\cTop$.
Thus the local Noetherian property for $\Rep_k(\cT_l^\op)$ follows from the local Noetherian property for $\Rep_k(\cTop)$.
\end{proof}

\begin{remark}
One can also prove that $\Rep_k(\cT_l^\op)$ is locally Noetherian by showing the category $\cT_l^\op$ is quasi-Gr\"obner;
this would involve mimicking Barter's argument in the setting of trees with at most $l$ leaves.
\end{remark}

\subsection{Subdivision}\label{sec:subdivision}
Fix a tree $T$, a natural number $r$, and an ordered $r$-tuple $\ue = (e_1,\ldots,e_r)$ of distinct directed
edges of $T$.  For any ordered $r$-tuple $\um = (m_1,\ldots,m_r)$ of natural numbers, let $T(\ue,\um)$ be 
the tree obtained from $T$ by subdividing each edge $e_i$ into $m_i$ edges.  
The number $m_i$ is allowed to be zero, and we adopt the convention
that subdividing $e_i$ into 0 edges means contracting $e_i$.  
For each $i$, the tree $T(\ue,\um)$ has a directed path of length $m_i$ where the directed edge $e_i$ used to be, 
and we label the vertices of that path $v_i^0,\ldots,v_i^{m_i}$.

Let $\OI$ be the category whose objects are linearly ordered finite sets and whose morphisms are ordered inclusions.
Every object of $\OI$ is isomorphic via a unique isomorphism to the finite set $[m]$ for some $m\in\N$.
For any $\um\in\N^r$, let $[\um]$ denote the corresponding object of the product category $\OIr$.

Our goal in this section is to define a {\bf subdivision functor} $\Phi_{T,\ue}:\OIr\to\cTop$ and prove that $\Phi_{T,\ue}$ 
has property (F).  
We define our functor on objects by putting $\Phi_{T,\ue}([\um]) := T(\ue,\um)$.
Let $\uf = (f_1,\ldots,f_r)$ be a morphism in $\OIr$ from $[\um]$ to $[\un]$.  
We define the corresponding contraction $$\Phi_{T,\ue}(\uf):T(\ue,\un)\to T(\ue,\um)$$
by sending $v_i^t$ to $v_i^s$, where $s$ is the maximal element of the set $\{0\}\cup \{j\mid f_i(j)\leq t\} \subset \{0,1,\ldots,m_i\}$.

\begin{example}
If $T$ consists of a single edge and $r=1$, then the essential image of $\Phi_{T,\ue}$ 
is equal to the path category $\cT_2^\op$.
On the other hand, $\OI$ may be identified with the full subcategory of $\RTi$ consisting of rooted paths with the root at an endpoint,
where the ordered set $[m]$ goes to the standard path on vertex set $\{0,\ldots,m\}$ with root 0.
(It is a regretable convention that the root is the maximal element of the vertex set of a rooted tree,
so this identification reverses the order on $[m]$.)
The functor $\Phi_{T,\ue}$ can be identified with the composition of the equivalence from Proposition \ref{opposite}
(restricted to paths) with the functor that forgets the root.
\end{example}

For any $\un \in\N^r$, let $|\un| := \sum n_i$.
Recall that for any tree $R$, we have defined $|R|$ to be the number of edges of $R$.
We say that a contraction $\varphi:T(\ue,\un)\to R$ 
{\bf factors nontrivially} if there exists a non-identity morphism $\uf:[\um]\to[\un]$ in $\OIr$
and a contraction $\psi:T(\ue,\um)\to R$ such that $\varphi = \psi\circ\Phi_{T,\ue}(\uf)$.

\begin{proposition}\label{OIrF}
The subdivision functor $\Phi_{T,\ue}:\OIr\to\cTop$ has property (F).
\end{proposition}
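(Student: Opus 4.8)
The plan is to unwind Property (F) for the subdivision functor $\Phi_{T,\ue}:\OIr\to\cTop$. Fix an object of $\cTop$, i.e.\ a tree $R$. I need to produce finitely many trees $T(\ue,\um^{(1)}),\ldots,T(\ue,\um^{(k)})$ together with contractions $f_j:R\to T(\ue,\um^{(j)})$ in $\cT$ (these are the morphisms $f_i:x\to\Phi(y_i)$ in $\cC'=\cTop$) such that every contraction $\varphi:T(\ue,\un)\to R$ in $\cT$ factors as $\varphi=\Phi_{T,\ue}(\uf)\circ$ (something coming from the $j$-th datum)—equivalently, in the language introduced just before the statement, such that every $\varphi:R\to T(\ue,\un)$ that does \emph{not} lift through one of the $f_j$ must ``factor nontrivially.'' The key observation is that a contraction out of $T(\ue,\un)$ is determined by which edges it contracts, and the only edges of $T(\ue,\un)$ not already present in $T$ are the ones lying on the $r$ subdivided paths $v_i^0,\ldots,v_i^{m_i}$; contracting a consecutive block of those is exactly what the image of a non-identity $\OIr$-morphism under $\Phi_{T,\ue}$ does.

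First I would set up the bookkeeping: given $R$ and a contraction $\psi:T(\ue,\un)\to R$, record for each $i$ the pattern of which of the $m_i$ path-edges on the $i$-th strand are contracted by $\psi$ and which survive. The surviving edges of the $i$-th strand, read in order, determine a composition of the interval, hence an order-preserving injection from some $[m_i']$ with $m_i'\le$ (number of surviving strand-$i$ edges) into $[n_i]$; the complementary data (how $\psi$ acts away from the strands, and how it identifies strand vertices with the rest of $R$) is a contraction $T(\ue,\um')\to R$. The crucial bound is that $m_i'$, the number of edges on strand $i$ that $\psi$ does \emph{not} contract, is at most $|R|$, since distinct non-contracted edges map to distinct edges of $R$. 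Hence the ``reduced'' tuples $\um'$ that can arise all satisfy $m_i'\le |R|$ for every $i$, so there are only finitely many of them, namely those $\um'\in\N^r$ with $|\um'|_\infty\le|R|$.

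Next I would define the finite list of objects $y_j$ to be exactly the trees $T(\ue,\um')$ for these finitely many tuples $\um'$, and for each one choose as $f_j$ any contraction witnessing that $T(\ue,\um')$ maps onto $R$ appropriately—more carefully, I should choose the $f_j$ so that an arbitrary $\psi:T(\ue,\un)\to R$ that contracts \emph{no} strand edge (the ``already reduced'' case) is itself one of the $f_j$ up to the appropriate identification; there are finitely many such $\psi$ up to isomorphism precisely because then $|\un|_\infty\le |R|$. Then for a general $\psi:T(\ue,\un)\to R$, factor it as $\psi = \psi'\circ\Phi_{T,\ue}(\uf)$ where $\uf:[\um']\to[\un]$ picks out the non-contracted strand edges and $\psi':T(\ue,\um')\to R$ is the induced reduced contraction; $\psi'$ in turn factors through one of the $f_j$ by construction (again by the $m_i'\le|R|$ bound applied one more time, or simply because $\psi'$ is already reduced so it literally \emph{is} one of the chosen $f_j$). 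Composing the two factorizations and using that property (F) only asks for \emph{some} $g:y_j\to y$ in $\OIr$ with $f=\Phi(g)\circ f_j$—here $g=\uf$—finishes the verification.

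The main obstacle I anticipate is purely combinatorial rather than conceptual: making the factorization $\psi=\psi'\circ\Phi_{T,\ue}(\uf)$ genuinely canonical and checking it is a factorization in $\cT$ (i.e.\ that $\psi'$ really is a contraction with connected fibers, and that $\Phi_{T,\ue}(\uf)$ is the map specified by the formula $v_i^t\mapsto v_i^s$ with $s=\max(\{0\}\cup\{j\mid f_i(j)\le t\})$). One has to be a little careful about the convention that subdividing into $0$ edges means contracting, so that a strand all of whose edges are contracted by $\psi$ corresponds to $m_i'=0$, which is allowed; and about directed versus undirected edges when reading off the composition. None of this is deep, but it is the place where a clean argument needs the definitions from Section~\ref{sec:subdivision} handled precisely. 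Once the reduction-to-$\um'$ step is in place, finiteness is immediate from the $|R|$ bound, and property (F) follows; combined with Remark~\ref{prop-F} this also gives that any module pulled back along $\Phi_{T,\ue}$ from a finitely generated $\cTop$-module is finitely generated, which is how this proposition will be used in Section~\ref{sec:growth}.
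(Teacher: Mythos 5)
Your proposal is correct and follows essentially the same route as the paper: in both arguments the heart is that a contraction $T(\ue,\un)\to R$ can leave at most $|R|$ strand edges uncontracted, that contracting strand edges is exactly precomposition with $\Phi_{T,\ue}$ of an $\OIr$-morphism, and that finiteness then follows from the finitely many bounded tuples together with finiteness of Hom sets in $\cT$. The only difference is packaging: the paper peels off one contracted strand edge at a time, using the reformulation of property (F) via contractions that do not factor nontrivially, while you assemble the full reduced factorization $\psi=\psi'\circ\Phi_{T,\ue}(\uf)$ in one step, which makes the finite list of morphisms $f_j$ explicit.
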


\begin{proof}
Property (F) says exactly that, for any tree $R$, the set of contractions 
from some $T(\ue,\um)$ to $R$ that do not factor nontrivially is finite.
Let $\varphi:T(\ue,\um)\to R$ be given.
We have $$|T(\ue,\um)| = |T| + |\um| - r,$$ so $\varphi$ must contract $|T| + |\um| - r - |R|$ edges.
If $|\um|$ is sufficiently large, then at least one of those edges must be one of the subdivided edges.
We may then factor $\varphi$ nontrivially by first contracting that edge.

This tells us that, if we are looking for contractions from some $T(\ue,\um)$ to $R$ that do not factor nontrivially,
we only need to consider finitely many $r$-tuples $\um$.  The proposition then
follows from the fact that all Hom sets in $\cT$ are finite.
\end{proof}

\begin{remark}\label{other-cats-F}
It will be convenient to record a few variants of Proposition \ref{OIrF} in which the category $\cTop$
is replaced by other closely related tree categories.  For example, if $T$ is rooted, then we get a functor
from $\OIr$ to $\cRTop$, which also has property (F).  If $T$ has at most $l$ leaves, then we get
a functor from $\OIr$ to $\cT_l^\op$, and this functor also has property (F).  
Both of these statements are proved in exactly the same way as Proposition \ref{OIrF}.
\end{remark}

\subsection{Sprouting}

Fix a tree $T$, a natural number $r$, and an ordered $r$-tuple $\uv := (v_1,\ldots,v_r)$ of distinct vertices of $T$.
For any ordered $r$-tuple $\um = (m_1,\ldots,m_r)$ of natural numbers, let $T(\uv,\um)$ be 
the tree obtained from $T$ by attaching $m_i$ new edges to the vertex $v_i$, each of which has a new leaf as its other endpoint.
We will label the new leaves connected to the vertex $v_i$ by the symbols $v_i^1,\ldots,v_i^{m_i}$.

Our goal in this section is to define a {\bf sprouting functor} $\Psi_{T,\uv}:\OIr\to\cTop$ and prove that $\Psi_{T,\uv}$ 
has property (F).  
We define our functor on objects by putting $\Psi_{T,\ue}([\um]) := T(\uv,\um)$.
Let $\uf = (f_1,\ldots,f_r)$ be a morphism in $\OIr$ from $[\um]$ to $[\un]$.  
We define the corresponding contraction $$\Psi_{T,\uv}(\uf):T(\uv,\un)\to T(\uv,\um)$$
by fixing all of the vertices of $T$, sending $v_i^t$ to $v_i^s$ if $f_i(s) = t$, and sending $v_i^t$ to $v_i$ of $t$ is not in the image
of $f_i$.

\begin{example}
If $T$ consists of a single vertex and $r=1$, then the essential image of $\Psi_{T,\uv}$ 
is equal to the category consisting of the graphs $K_{m,1}$ with one central vertex connected to $m$ satellites.
\end{example}

As in Section \ref{sec:subdivision},
we say that a contraction $\varphi:T(\uv,\un)\to R$ 
{\bf factors nontrivially} if there exists a non-identity morphism $\uf:[\um]\to[\un]$ in $\OIr$
and a contraction $\psi:T(\uv,\um)\to R$ such that $\varphi = \psi\circ\Psi_{T,\uv}(\uf)$.

\begin{proposition}\label{OIrF2}
The sprouting functor $\Phi_{T,\uv}:\OIr \rightarrow \cTop$ has property (F).
\end{proposition}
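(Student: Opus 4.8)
The plan is to mimic the proof of Proposition \ref{OIrF} essentially verbatim, since sprouting and subdivision both enlarge a fixed tree by an amount controlled by $|\um|$. Property (F) for $\Psi_{T,\uv}$ says precisely that for every target tree $R$, there are only finitely many contractions $\varphi:T(\uv,\um)\to R$ (ranging over all $\um\in\N^r$) that do not factor nontrivially through some $\Psi_{T,\uv}(\uf)$ with $\uf$ a non-identity morphism of $\OIr$. As in the subdivision case, the key is a crude edge count: we have $|T(\uv,\um)| = |T| + |\um|$, so any contraction $\varphi$ onto $R$ must contract exactly $|T| + |\um| - |R|$ edges.

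Next I would argue that when $|\um|$ is large, at least one contracted edge must be one of the newly sprouted edges $v_iv_i^t$. Indeed, the number of non-sprout edges is the constant $|T|$, so once $|\um| > |R|$ (say) the pigeonhole forces a contracted sprout edge $v_iv_i^t$. Contracting that edge identifies the leaf $v_i^t$ with $v_i$, which is exactly the effect of precomposing with the $\OIr$-morphism $\uf$ that drops the index $t$ from the $i$-th factor; formally, $\varphi$ factors as $\psi\circ\Psi_{T,\uv}(\uf)$ where $\uf:[\um']\to[\um]$ deletes one element from the $i$-th coordinate and $\psi:T(\uv,\um')\to R$ is the induced contraction. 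Hence every $\varphi$ with $|\um|$ large factors nontrivially, so the non-factoring contractions involve only finitely many tuples $\um$; since each $\Hom$-set in $\cT$ is finite (trees are finite, and a contraction is determined by its map on the finite vertex set), there are only finitely many such $\varphi$ in total, giving property (F).

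One small bookkeeping point to get right is the correspondence between "contracting a sprout edge" and "precomposing with an $\OIr$-morphism": the definition of $\Psi_{T,\uv}(\uf)$ sends $v_i^t$ to $v_i^s$ when $f_i(s)=t$ and to $v_i$ when $t\notin\im f_i$, so the non-identity morphism I need is precisely an order inclusion $[\um']\hookrightarrow[\um]$ that omits one value in the $i$-th slot — this realizes "forget the leaf $v_i^t$" as $\Psi_{T,\uv}$ of an $\OIr$-morphism, and then $\psi$ is forced. I expect this is genuinely the only thing requiring care; the rest is identical to Proposition \ref{OIrF}. (As in Remark \ref{other-cats-F}, the same argument with $\cTop$ replaced by $\cRTop$ or $\cT_l^\op$ also goes through, since sprouting at the vertices $v_i$ does not affect the root and adds at most $r$ new leaves; but only the $\cTop$ statement is asserted here.) Note also the harmless typo in the statement — the functor is called $\Psi_{T,\uv}$ in the text and $\Phi_{T,\uv}$ in the displayed proposition; I would just treat them as the same object.

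\begin{proof}
Property (F) asserts that, for any tree $R$, the set of contractions from some $T(\uv,\um)$ to $R$ that do not factor nontrivially is finite. Let $\varphi:T(\uv,\um)\to R$ be given. Since $|T(\uv,\um)| = |T| + |\um|$, the contraction $\varphi$ must contract exactly $|T| + |\um| - |R|$ edges. The tree $T(\uv,\um)$ has $|T|$ edges not of the form $v_iv_i^t$, so if $|\um|$ is sufficiently large then at least one contracted edge is a sprout edge $v_iv_i^t$. Contracting $v_iv_i^t$ identifies the leaf $v_i^t$ with $v_i$; letting $\uf:[\um']\to[\um]$ be the order inclusion that omits the value $t$ in the $i$-th coordinate and is the identity elsewhere, this exhibits a contraction $\psi:T(\uv,\um')\to R$ with $\varphi = \psi\circ\Psi_{T,\uv}(\uf)$, so $\varphi$ factors nontrivially. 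Hence only finitely many tuples $\um$ can support a contraction to $R$ that does not factor nontrivially, and since all Hom sets in $\cT$ are finite, there are only finitely many such contractions in all.
\end{proof}
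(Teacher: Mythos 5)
Your proof is correct and follows essentially the same edge-counting argument as the paper: count contracted edges via $|T(\uv,\um)| = |T| + |\um|$, force a contracted sprout edge when $|\um|$ is large, factor through the $\OIr$-morphism omitting that leaf, and finish by finiteness of Hom sets in $\cT$. Your explicit identification of the $\OIr$-morphism realizing the factorization (and your note on the $\Phi$/$\Psi$ typo) only makes explicit what the paper leaves implicit.
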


\begin{proof}
The philosophy of the proof is nearly identical to that of Proposition \ref{OIrF}.
Property (F) says exactly that, for any tree $R$, the set of contractions 
from some $T(\uv,\um)$ to $R$ that do not factor nontrivially is finite.
Let $\psi:T(\uv,\um)\to R$ be given.
We have $$|T(\uv,\um)| = |T| + |\um|,$$ so $\psi$ must contract $|T| + |\um| - |R|$ edges.
If $|\um|$ is sufficiently large, then at least one of those edges must be one of the newly sprouted edges.
We may then factor $\psi$ nontrivially by first contracting that edge.

This tells us that, if we are looking for contractions from some $T(\ue,\um)$ to $R$ that do not factor nontrivially,
we only need to consider finitely many $r$-tuples $\um$.  The proposition then
follows from the fact that all Hom sets in $\cT$ are finite.
\end{proof}

\begin{remark}\label{other-cats-F2}
As in the case of subdivisons (Remark \ref{other-cats-F}), we may define an analogous functor
valued in $\cRTop$, and it will still have property (F).  In contrast with Remark \ref{other-cats-F}, we may {\bf not} define an analogous
functor valued in any bounded leaf category $\cT_l^\op$, since the operation of sprouting yields trees with
arbitrary numbers of leaves.
\end{remark}

\subsection{Generation degree, smallness, and dimension growth}\label{sec:growth}
We say that a module $M$ in $\Rep_k(\cTop)$
is {\bf generated in degrees \boldmath{$\leq d$}}
if there exist trees
$T_1,\ldots,T_r$, each with at most $d$ edges, such that $M$ is isomorphic to a quotient
of $\oplus_{i=1}^r P_{T_i}$.
Equivalently, $M$ is generated in degrees $\leq d$ if, for every tree $T$ with more than $d$ edges,
$M(T)$ is spanned by the images of $\varphi^*$ for various proper contractions
$\varphi:T\to T'$.
We say that $M$ is {\bf \boldmath{$d$}-small} if it is isomorphic
to a subquotient of a module that is generated in degrees $\leq d$, and {\bf \boldmath{$d$}-smallish}
if it admits a filtration whose associated graded is $d$-small.
We make similar definitions for modules in $\Rep_k(\cRTop)$ or $\Rep_k(\cT_l^\op)$. 

\begin{proposition}\label{smallish fg}
If $M$ is $d$-smallish for some $d$, then $M$ is finitely generated.
\end{proposition}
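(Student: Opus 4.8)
The plan is to reduce the claim for $d$-smallish modules to the claim for $d$-small modules, and then to the claim for modules generated in degrees $\leq d$, where finite generation is immediate (a finite direct sum of principal projectives is tautologically finitely generated). So first I would observe that if $M$ admits a finite filtration $0 = M_0 \subseteq M_1 \subseteq \cdots \subseteq M_k = M$ whose successive quotients $M_j/M_{j-1}$ are each $d$-small, then it suffices to show that each $M_j/M_{j-1}$ is finitely generated: finite generation is closed under extensions, since given surjections onto $M_{j-1}$ and onto $M_j/M_{j-1}$ from finite sums of principal projectives, one can lift and combine them to get a surjection onto $M_j$ (this uses nothing beyond the fact that principal projectives are projective, together with the long exact sequence / diagram chase for the pair $M_{j-1} \hookrightarrow M_j \twoheadrightarrow M_j/M_{j-1}$). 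Strictly speaking the definition of $d$-smallish should be taken to mean a \emph{finite} filtration; I would make that explicit.

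Next I would handle the $d$-small case. By definition a $d$-small module $N$ is a subquotient of some $M'$ generated in degrees $\leq d$, say $N \cong N_1/N_0$ with $N_0 \subseteq N_1 \subseteq M'$. Now $M'$ is a quotient of a finite direct sum $\bigoplus_{i=1}^r P_{T_i}$ with each $|T_i| \leq d$, hence $M'$ is finitely generated; and $N_1 \subseteq M'$ is a submodule, so by the local Noetherian property (Theorem~\ref{Noetherian}, valid for $\cTop$, $\cRTop$, and $\cT_l^\op$) $N_1$ is finitely generated; finally $N = N_1/N_0$ is a quotient of $N_1$, hence finitely generated. This is the step where the Noetherian hypothesis does all the work — that is, the passage from ``subquotient of something nice'' to ``finitely generated'' is exactly where we cannot get by with soft arguments, and it is the reason the theorem is stated only for these three tree categories rather than for $\cGop$.

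The only genuine subtlety, and the place I would be most careful, is bookkeeping about which ambient category we are in: a $d$-smallish module over $\cRTop$ is filtered by $d$-small modules over $\cRTop$, each of which is a subquotient of a module generated in degrees $\leq d$ \emph{over $\cRTop$}, and Theorem~\ref{Noetherian} applies to $\Rep_k(\cRTop)$; likewise for $\cTop$ and for $\cT_l^\op$ (in the last case using the identification of $\Rep_k(\cT_l^\op)$ with a full subcategory of $\Rep_k(\cTop)$ on which finite generation over $\cT_l^\op$ coincides with finite generation over $\cTop$, as noted in the proof of Theorem~\ref{Noetherian}). Beyond that, the argument is the standard ``Noetherian $\Rightarrow$ subquotients of finitely generated are finitely generated, and finite generation is closed under extensions'' and requires no real computation.
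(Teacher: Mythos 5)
Your proof is correct and follows essentially the same route as the paper: the paper likewise uses Theorem \ref{Noetherian} to conclude that the ($d$-small) associated graded is finitely generated and then lifts finitely many generators along the filtration, which is exactly the content of your extension-closure induction. The only cosmetic difference is that the paper lifts generators of the whole associated graded in one step rather than inducting quotient by quotient, and your explicit remark that the filtration should be finite is a reasonable reading of the definition that the paper leaves implicit.
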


\begin{proof}
Choose a filtration of $M$ such that the associated graded $\operatorname{gr} M$ is $d$-small.
Theorem \ref{Noetherian} implies that $\operatorname{gr} M$ is finitely generated.  This means that there is a finite collection of trees
$T_1,\ldots,T_r$ of trees, along with elements $v_i\in \operatorname{gr} M(T_i)$,
such that, for any tree $T$, the natural map
$$\bigoplus_{i=1}^r \bigoplus_{\varphi:T\to T_i} k\cdot e_{i,\varphi} \to \operatorname{gr} M(T)$$
taking $e_{i,\varphi}$ to $\varphi^* v_i$
is surjective.  For each $i$, choose an arbitrary lift $\tilde v_i\in M(T_i)$ of $v_i$.
Since surjectivity is an open condition, the nautral map
$$\bigoplus_{i=1}^r \bigoplus_{\varphi:T\to T_i} k\cdot e_{i,\varphi} \to M(T)$$
taking $e_{i,\varphi}$ to $\varphi^* \tilde v_i$
is also surjective, which means that $M$ is finitely generated.
\end{proof}

\begin{proof}[Proof of Theorem \ref{bounded growth}]
We may immediately reduce to the case where $M$ is the principal projective $P_R$ for some tree $R$ with $d$ edges.
For any $T$, a contraction from $R$ to $T$ is determined, up to automorphisms of $R$,
by a choice of $|T|-d$ edges of $T$ to contract.  The number of such choices is $\binom{|T|}{d}$, so
$\dim_k P_R(T) \leq |\operatorname{Aut}(R)|\binom{|T|}{d}$.  The fact that we have an inequality rather than an equality
is a reflection of the fact that not every contraction of $T$ with $d$ edges is isomorphic to $R$.
\end{proof}

Theorem \ref{bounded growth} only gives us an upper bound for the dimension of $M(T)$.
We cannot possibly expect equality, since the dimension of $M(T)$ usually depends on the structure of $T$,
not just on the number of edges.  However, if we fix a tree $T$ and an $r$-tuple $\ue$ of distinct directed edges,
we can show that the dimension of $M(T(\ue,\um))$ is eventually equal to a polynomial in $\um$.

\begin{theorem}\label{actual polynomial}
Let $k$ be a field, and suppose that $M$ is $d$-smallish.  Then there exists a multivariate polynomial $f_{M,T,\ue}(t_1,\ldots,t_r)$
of total degree at most $d$ such that, if $\um$ is sufficiently large in every coordinate, 
$$\dim_k M(T(\ue,\um)) = f_{M,T,\ue}(m_1,\ldots,m_r).$$
\end{theorem}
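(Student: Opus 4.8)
The plan is to restrict $M$ along the subdivision functor $\Phi_{T,\ue}\colon \OIr\to\cTop$ of Proposition \ref{OIrF}, pass to the category $\OIr$ where finitely generated modules are known to have multivariate Hilbert polynomials, and then pin down the total degree of that polynomial using the explicit count of contractions from the proof of Theorem \ref{bounded growth}. First I would reduce to the case that $M$ is $d$-small. A $d$-smallish module admits a \emph{finite} filtration with $d$-small subquotients: the associated graded is $d$-small, hence finitely generated by Theorem \ref{Noetherian}, and a finitely generated module that splits as a direct sum has only finitely many nonzero summands. Since $\dim_k(-)$ is additive along a finite filtration and a finite sum of polynomials of total degree $\le d$ is again such a polynomial, it suffices to treat $d$-small $M$. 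By definition such an $M$ is isomorphic to $K/L$ for submodules $L\subseteq K\subseteq P:=\bigoplus_{a=1}^{s}P_{T_a}$ with each $|T_a|\le d$; by Theorem \ref{Noetherian} the submodules $K$ and $L$ are themselves finitely generated $\cTop$-modules. (For modules over $\cRTop$ or $\cT_l^\op$ one repeats everything in that category, using that subdivision preserves the number of leaves so that the relevant variant of $\Phi_{T,\ue}$ from Remark \ref{other-cats-F} is available.)

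Next I would pull back along $\Phi:=\Phi_{T,\ue}$. Restriction along $\Phi$ is exact and satisfies $\Phi^{*}N([\um])=N(T(\ue,\um))$ for every $\cTop$-module $N$, so $\Phi^{*}M=\Phi^{*}K/\Phi^{*}L$ and $\dim_k M(T(\ue,\um))=\dim_k\Phi^{*}K([\um])-\dim_k\Phi^{*}L([\um])$. Since $\Phi$ has property (F) (Proposition \ref{OIrF}) and $K,L$ are finitely generated, Theorem \ref{fg} shows that $\Phi^{*}K$ and $\Phi^{*}L$ are finitely generated $\OIr$-modules. I would then invoke the fact that a finitely generated $\OIr$-module $V$ over a field has an eventual Hilbert polynomial: there is a polynomial in $m_1,\dots,m_r$ agreeing with $\dim_k V([\um])$ whenever $\um$ is large in every coordinate. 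This is the $r$-fold product version of the standard $\OI$-module Hilbert polynomial; it follows from the rationality of the Hilbert series supplied by the Gr\"obner theory of \cite{sam}, and can also be proved directly by induction on $r$. Subtracting the two Hilbert polynomials exhibits $\dim_k M(T(\ue,\um))$ as an eventual polynomial $f_{M,T,\ue}(m_1,\dots,m_r)$.

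It remains to bound $\deg f_{M,T,\ue}$ by $d$. As in the proof of Theorem \ref{bounded growth}, a contraction $T(\ue,\um)\to T_a$ is determined, up to $\Aut(T_a)$, by the set of $|T_a|$ edges of $T(\ue,\um)$ that it does not contract, so $\dim_k P_{T_a}(T(\ue,\um))\le|\Aut(T_a)|\binom{|T(\ue,\um)|}{|T_a|}$. Since $|T(\ue,\um)|=|T|+|\um|-r$ and $|T_a|\le d$, this is $O(|\um|^{d})$ as $\um$ grows, hence $\dim_k M(T(\ue,\um))\le\dim_k P(T(\ue,\um))=O(|\um|^{d})$. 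A polynomial that is nonnegative on $\N^{r}$ and bounded above there by a constant times $|\um|^{d}$ must have total degree at most $d$: otherwise its top-degree homogeneous part, being a nonzero polynomial, would be nonzero at some point $\uv$ of the positive orthant, and then $f_{M,T,\ue}(t\uv)$ would be a polynomial in $t$ of degree $>d$, contradicting either the upper bound $O(t^{d})$ or nonnegativity. This gives $\deg f_{M,T,\ue}\le d$.

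The main obstacle is the second step combined with the third: securing (by citation or a short self-contained argument) the multivariate Hilbert-polynomiality of finitely generated $\OIr$-modules, and then marrying that purely existential statement to the concrete binomial bound in order to control the total degree. Alternatively, one can bypass the separate binomial estimate by tracking the $\OIr$-generation degree of $\Phi^{*}P_{T_a}$ — by the argument of Proposition \ref{OIrF} its generators lie over objects $[\um]$ with $|\um|\le|T_a|\le d$ — and using that an $\OIr$-module generated in degree $\le d$ has Hilbert polynomial of total degree $\le d$; this packages the same estimate more structurally. The reductions in the first paragraph are routine, but the finiteness of the $d$-smallish filtration and the exactness and evaluation properties of $\Phi^{*}$ deserve to be spelled out.
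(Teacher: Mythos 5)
Your proposal is correct and follows essentially the same route as the paper: restrict along the subdivision functor $\Phi_{T,\ue}$, use property (F) (Proposition \ref{OIrF}) together with Theorem \ref{fg} to get a finitely generated $\OIr$-module, invoke the Sam--Snowden results on eventual multivariate Hilbert polynomials of finitely generated $\OIr$-modules (exactly the citation the paper makes), and cap the total degree at $d$ via the growth bound of Theorem \ref{bounded growth}. Your extra reductions (finite filtration to the $d$-small case and writing $M$ as $K/L$ to pull back $K$ and $L$ separately) are sound but unnecessary, since Proposition \ref{smallish fg} already makes $M$ itself finitely generated, so one can restrict $M$ directly.
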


\begin{proof}
Proposition \ref{smallish fg} tells us that $M$ is finitely generated,
though we have no control over the degree of generation.
Theorem \ref{fg} and Proposition \ref{OIrF} combine to tell us that $\Phi^*_{T,\ue}M$ is a finitely generated 
$\OIr$-module.  By \cite[Theorem 6.3.2, Proposition 6.3.3, and Theorem 7.1.2]{sam}, 
this implies that there exists a multivariate polynomial $f_{M,T,\ue}(t_1,\ldots,t_r)$
such that, if $\um\in\N^r$ is sufficiently large in every coordinate, 
$$\dim_k M(T(\ue,\um)) = \dim_k \Phi^*_{T,\ue}M([\um]) = f_{M,T,\ue}(m_1,\ldots,m_r).$$
Theorem \ref{bounded growth} says that $\dim_k M(T(\ue,\um))$ is bounded above by a polynomial of degree $d$ in 
the quantity $|T(\ue,\um)| = |T|-r+|\um|$, thus the total degree of $f_{M,T,\ue}(t_1,\ldots,t_r)$ can be at most $d$.
\end{proof}

\begin{remark}
Let $I_m$ be the standard path of length $m$.  
If $M$ is a $d$-smallish $\cTop$-module, Theorem \ref{actual polynomial}
tells us that the function taking $m$ to $\dim_k M(I_m)$ agrees with a polynomial for sufficiently large $m$.
For positive $m$, the automorphism group of $I_m$ is $S_2$, and if $k$ is a field of characteristic not equal to 2,
we might also guess that the dimensions of isotypic components
of the trivial and sign representations in $M(I_m)$ grow polynomially in $m$.  This, however, is false.
For example, suppose that $M$ is the module that assigns to each tree $T$ the vector space with basis given by the edges of $T$.
More precisely, the principal projective $P_{I_1}$ assigns to each tree the vector space with basis given by the directed edges of $T$,
and we define $M := P_{I_1}^{\Aut(I_1)}$.  The module $M$ is evidently 1-small.  However, the dimension of the trivial isotypic component
of $M(I_m)$ is $\dim_k M(I_m)^{\Aut(I_m)} = \lfloor\frac{m+1}{2}\rfloor$, which is quasi-polynomial in $m$.
\end{remark}

We also have an analogue of Theorem \ref{actual polynomial} in which subdivision is replaced by sprouting.
The proof is identical, so we omit it.
Fix a tree $T$ and an $r$-tuple $\uv$ of distinct vertices.

\begin{theorem}\label{actual polynomial 2}
Let $k$ be a field, and suppose that $M$ is $d$-smallish.  Then there exists a multivariate polynomial $f_{M,T,\uv}(t_1,\ldots,t_r)$
of total degree at most $d$ such that, if $\um$ is sufficiently large in every coordinate, 
$$\dim_k M(T(\uv,\um)) = f_{M,T,\uv}(m_1,\ldots,m_r).$$
\end{theorem}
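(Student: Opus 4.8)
The plan is to run the proof of Theorem \ref{actual polynomial} line for line, with the subdivision functor $\Phi_{T,\ue}$ replaced throughout by the sprouting functor $\Psi_{T,\uv}$. First I would invoke Proposition \ref{smallish fg} to conclude that $M$ is finitely generated; as in the subdivision case this gives no control over the degree of generation, but that will not matter. Next, Proposition \ref{OIrF2} tells us that the sprouting functor $\Psi_{T,\uv}:\OIr\to\cTop$ has property (F), and Remark \ref{other-cats-F2} gives the same statement for the analogous functor into $\cRTop$ when $T$ is rooted; so Theorem \ref{fg} applies and shows that the pullback $\Psi^*_{T,\uv}M$ is a finitely generated $\OIr$-module.

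Now I would appeal to the structure theory of $\OIr$-modules from \cite{sam} (Theorem 6.3.2, Proposition 6.3.3, and Theorem 7.1.2), exactly as in the proof of Theorem \ref{actual polynomial}: a finitely generated $\OIr$-module has a Hilbert function that agrees with a polynomial once the multi-index is large in every coordinate. This yields a multivariate polynomial $f_{M,T,\uv}(t_1,\ldots,t_r)$ with
$$\dim_k M(T(\uv,\um)) = \dim_k \Psi^*_{T,\uv}M([\um]) = f_{M,T,\uv}(m_1,\ldots,m_r)$$
for $\um\in\N^r$ sufficiently large in every coordinate. To bound the total degree, I would use Theorem \ref{bounded growth} together with the identity $|T(\uv,\um)| = |T| + |\um|$: the left-hand side above is bounded above by a polynomial of degree $d$ in $|T| + |\um|$, which forces $f_{M,T,\uv}$ to have total degree at most $d$.

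I do not expect any genuine obstacle here — the argument is identical to that of Theorem \ref{actual polynomial}, which is precisely why it can be omitted. The only points that require a moment's care are bookkeeping ones: that the relevant property-(F) input is Proposition \ref{OIrF2} (and its rooted variant in Remark \ref{other-cats-F2}) rather than Proposition \ref{OIrF}, and that, in contrast with subdivision, there is no version of this result for the bounded-leaf categories $\cT_l^\op$, since sprouting produces trees with arbitrarily many leaves. This is why the statement is phrased only for $\cTop$ and $\cRTop$.
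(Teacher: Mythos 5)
Your proposal is correct and is exactly the argument the paper intends: the paper omits the proof precisely because it is the proof of Theorem \ref{actual polynomial} verbatim, with Proposition \ref{OIrF2} (and Remark \ref{other-cats-F2}) supplying property (F) for the sprouting functor in place of Proposition \ref{OIrF}, and with $|T(\uv,\um)| = |T| + |\um|$ feeding into Theorem \ref{bounded growth} for the degree bound. Your closing remarks about the rooted variant and the failure for $\cT_l^\op$ also match Remark \ref{other-cats-growth}.
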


In the applications that follow, it will be crucial to know that tensor products of small modules behave in the expected way.

\begin{proposition}\label{tensor}
Suppose that $M$ and $N$ are $\cTop$-modules.
If $M$ is $d$-small and $N$ is $e$-small, then $M\otimes_k N$ is $(d+e)$-small.
\end{proposition}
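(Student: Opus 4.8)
The statement concerns products in $\Rep_k(\cTop)$, so the plan is to reduce the claim to the behavior of principal projectives under tensor product, and then to analyze $P_R \otimes_k P_S$ directly. First I would recall that a $d$-small module is by definition a subquotient of a module generated in degrees $\leq d$, so $M$ is a subquotient of $\bigoplus_i P_{R_i}$ with each $|R_i|\leq d$ and $N$ a subquotient of $\bigoplus_j P_{S_j}$ with each $|S_j|\leq e$. Since tensoring over $k$ is exact (as $k$ is a Noetherian ring and we may eventually pass to a field or simply note that subquotients of $A$ and $B$ give a subquotient of $A\otimes_k B$ when everything is flat — more carefully, one forms the subquotient of $A \otimes B$ by tensoring the relevant sub/quotient maps), $M\otimes_k N$ is a subquotient of $\left(\bigoplus_i P_{R_i}\right)\otimes_k\left(\bigoplus_j P_{S_j}\right) \cong \bigoplus_{i,j} P_{R_i}\otimes_k P_{S_j}$. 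Hence it suffices to show that $P_R\otimes_k P_S$ is $(|R|+|S|)$-small whenever $R,S$ are trees.

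\textbf{The core computation.} Fix trees $R$ and $S$. By definition $(P_R\otimes_k P_S)(T)$ is the free $k$-module on the set $\Hom_{\cT}(T,R)\times\Hom_{\cT}(T,S)$ of pairs of contractions out of $T$. Given such a pair $(\varphi:T\to R,\ \psi:T\to S)$, the key observation is that the two partitions of the vertex set of $T$ induced by $\varphi$ and $\psi$ (into subtrees) have a common refinement, and one can form a "pushout-type" contraction $\chi:T\to U$ whose fibers are the connected components of the intersections of $\varphi$-fibers with $\psi$-fibers; then both $\varphi$ and $\psi$ factor through $\chi$, say $\varphi = \bar\varphi\circ\chi$ and $\psi=\bar\psi\circ\chi$. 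The tree $U$ has at most $|R|+|S|$ edges, because collapsing the image back along either $\bar\varphi$ or $\bar\psi$ shows the number of "extra" identifications is bounded — more precisely, the number of edges of $U$ not contracted by $\bar\varphi$ is at most $|R|$ and likewise for $\bar\psi$, and every edge of $U$ must be non-contracted by at least one of the two (otherwise its two endpoints lie in a common $\varphi$-fiber and a common $\psi$-fiber, hence in a common $\chi$-fiber, contradiction). So $|U|\leq |R|+|S|$. This factorization lets me define a surjection $\bigoplus_{U} P_U \twoheadrightarrow P_R\otimes_k P_S$, where $U$ ranges over (isomorphism classes of, or a set of representatives of) trees with $|U|\leq d+e$ equipped with a chosen pair of contractions to $R$ and $S$: the generator of $P_U$ at $U$ maps to $(\bar\varphi,\bar\psi)$, and functoriality together with the factorization just described shows every pair $(\varphi,\psi)$ over every $T$ is hit. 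There are only finitely many such $U$ up to isomorphism since $\cT$ has finite Hom-sets and bounded edge number forces finitely many iso-classes of trees, so this exhibits $P_R\otimes_k P_S$ as a quotient of a module generated in degrees $\leq d+e$, i.e. it is $(d+e)$-small.

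\textbf{Main obstacle.} I expect the delicate point to be the bookkeeping around the pushout tree $U$ and the claim $|U|\leq|R|+|S|$: one has to check that the common refinement of two "subtree partitions" of a tree again has subtree blocks (true: an intersection of two subtrees of a tree is a subtree, and its connected components in the ambient tree — actually the intersection is already connected — so the blocks of the common refinement are subtrees), and then carefully count edges. The cleanest way is probably to argue that the map $T\to U$ is the "coproduct" of $\varphi$ and $\psi$ in an appropriate sense and that $U$ is the image of the induced map $T\to R\times_{pt} S$ at the level of vertex sets, then bound $|U|$ by an Euler-characteristic / counting argument: a tree with $v$ vertices has $v-1$ edges, the vertex set of $U$ is a quotient of that of $T$ that still surjects onto both $R$ and $S$, and any edge of $U$ collapsed by both $\bar\varphi$ and $\bar\psi$ would have been already collapsed in forming $U$. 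A secondary, purely formal nuisance is justifying that "subquotient" passes through $\otimes_k$ and through finite direct sums; for a field $k$ this is immediate, and for a general Noetherian $k$ one either restricts attention to the case needed in the applications or notes that $P_R$ is a flat (indeed free) $k$-module levelwise, so tensoring a short exact sequence of submodule/quotient presentations stays exact. I would handle the direct-sum and subquotient reductions in a couple of lines and devote the bulk of the argument to constructing $U$ and bounding its size.
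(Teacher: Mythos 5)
Your proposal is correct and follows essentially the same route as the paper: reduce to the case of principal projectives $P_R\otimes_k P_S$, and show that any pair of contractions $T\to R$, $T\to S$ factors through a common contraction onto a tree with at most $d+e$ edges. Your explicit construction of the common-refinement tree $U$ and the count "every edge of $U$ survives in $R$ or in $S$" is just a spelled-out (dual) version of the paper's inclusion--exclusion count $(|T|-d)+(|T|-e)-|T|$ of edges contracted by both morphisms, so the two arguments coincide in substance.
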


\begin{proof}
We may immediately reduce to the case where $M = P_R$ and $N = P_S$, where $|R| = d$ and $|S| = e$.
Then for any tree $T$, $(M\otimes_k N)(T)$ has a basis given by ordered pairs consisting of a $\cT$-morphism
from $T$ to $R$ and a $\cT$-morphism from $T$ to $S$.  A $\cT$-morphism from $T$ to $R$ contracts $|T|-d$ edges,
and a $\cT$-morphism from $T$ to $S$ contracts $|T|-e$ edges.  For any choice of this pair of morphisms, the number
of edges that are contracted by both morphisms is at least $$(|T|-d) + (|T|-e) - |T| = |T|-d-e,$$ which means that the 
two morphisms both factor through a $\cT$-morphism from $T$ to a tree with at most $d+e$ edges.
\end{proof}

\begin{remark}\label{other-cats-growth}
By Theorem \ref{Noetherian} and Remarks \ref{other-cats-F} and \ref{other-cats-F2},
almost all of the results in this section hold equally well when $\Rep_k(\cTop)$ is replaced by $\Rep_k(\cRTop)$
or $\Rep_k(\cT_l^\op)$.  The only exception is that it is not possible to replace $\Rep_k(\cTop)$ with 
$\Rep_k(\cT_l^\op)$ in Theorem \ref{actual polynomial 2}, since sprouting does not make sense in a bounded leaf subcategory.
\end{remark}

\section{Homology of configuration spaces}\label{sec:homology}
Given a graph $G$, the \textbf{\boldmath{$n$}-stranded unordered configuration space of \boldmath{$G$}} is the topological space
\[
\UConf_n(G) := \big\{(x_1,\ldots,x_n) \in G^n \bigmid x_i \neq x_j\big\}\big{/}S_n.
\]
We will study the homology of these spaces for fixed $n$, with $G$ being either a tree or the cone over a tree.

\subsection{The reduced \'Swi\k{a}tkowski complex}

The primary tool used to compute the homology groups of configuration spaces of graphs is the 
reduced \'Swi\k{a}tkowski complex, originally defined by An, Drummond-Cole, and Knudsen \cite{ADK}. 
Fix a graph $G$, and let $A_G$ to be the integral polynomial ring generated by the edges of $G$. 
A \textbf{half-edge} of $G$ is a pair consisting of a vertex $v$ and an edge $e$ such that $v$ is an endpoint of $e$. 
Given a half-edge $h$, we denote its vertex by $v(h)$ and its edge by $e(h)$. 

For any vertex $v$, let $S(v)$ denote the free $A_G$-module generated by the symbol $\emptyset$
along with all half-edges of $G$ with vertex $v$.
We equip $S(v)$ with a bigrading by defining an edge to have degree $(0,1)$, $\emptyset$ to have degree $(0,0)$,
and a half-edge to have degree $(1,1)$.  Let $\tS(v)\subset S(v)$ be the submodule generated by the elements
$\emptyset$ and $h - h'$ for all half edges $h$ and $h'$.  We equip $\tS(v)$ with an $A_G$-linear differential $\partial_v$
of degree $(-1,0)$ by putting $$\partial (h-h') := \big(e(h)-e(h')\big) \emptyset  \and \partial\emptyset = 0.$$
We then define the
\textbf{reduced \'Swi\k{a}tkowski complex}
\[
\Sw(G) \;\;:=\;\; \bigotimes_{v \in \Vert(G)} \widetilde{S}(v),
\]
where the tensor product is taken over the ring $A_G$; this is a bigraded free $A_G$-module with a differential $\partial$.

For any graph $G$, let $H_\bullet\big(\UConf_\star(G)\big)$ denote the bigraded abelian group
\[
H_\bullet\big(\UConf_\star(G)\big) := \bigoplus_{(i,n)} H_i\big(\UConf_n(G); \Z\big).
\]

\begin{theorem}{\em \cite[Theorem 4.5]{ADK}}\label{swiacompute}
There is an isomorphism of bigraded abelian groups
\[
H_\bullet\big(\UConf_\star(G)\big) \cong H_\bullet\big(\Sw(G)\big).
\]
\end{theorem}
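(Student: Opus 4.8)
The result is quoted from \cite{ADK}, so I will only describe the strategy one would follow to establish it. The plan is to pass to a combinatorial model of the configuration space and then perform an algebraic reduction. First I would replace $\UConf_n(G)$ by Abrams' discretized configuration space: after subdividing $G$ sufficiently finely (in Abrams' sense, so that no short essential path or cycle obstructs the collapse), the discretized configuration space — a cube complex whose cells are disjoint collections of vertices and edges of the subdivided graph — is a deformation retract of $\UConf_n(G)$, and subdividing an edge does not change the homotopy type of the configuration space. This reduces the problem to computing the homology of an explicit, if very large, complex of free abelian groups, functorially in the graph and compatibly with the edge-polynomial-ring $A_G$ action that records the number of points (so that working with $\UConf_\star$ all at once is natural).

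The heart of the argument is a local-to-global principle. Assembling $G$ one vertex at a time, one cuts $G$ at a vertex $v$ of valence $d$ into the star of $v$ and the remaining subgraph glued along $d$ arcs; a Mayer--Vietoris argument, together with the fundamental fact that a point can always be slid freely along an edge (edge stabilization), identifies the configuration space with a homotopy pushout and yields a Künneth-type spectral sequence $\operatorname{Tor}^{A_G}_{*}\big(-,-\big) \Rightarrow H_\bullet\big(\UConf_\star(G)\big)$. One then computes directly that the configuration space of a single star with $d$ stubs, after accounting for free stabilization along the stubs, is modeled by the local factor $\widetilde{S}(v)$ with its differential $\partial_v$ — up to the edge variables this space is a wedge of spheres whose count is exactly the homology of $\widetilde{S}(v)$. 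Inducting on the number of vertices and feeding in this local computation gives $H_\bullet\big(\UConf_\star(G)\big) \cong H_\bullet\big(\bigotimes_{v\in\Vert(G)} \widetilde{S}(v)\big) = H_\bullet(\Sw(G))$, provided the spectral sequence degenerates at every stage.

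The degeneration is where the real work lies, and it is the step I expect to be the main obstacle: one must show that cutting at a vertex produces an honest homotopy pushout of graph configuration spaces with no correction terms, equivalently that the local factors $\widetilde{S}(v)$ at distinct vertices are $\operatorname{Tor}$-independent over $A_G$. This is exactly where the special structure of graph configuration spaces, as opposed to configuration spaces of general spaces, is used: each $\widetilde{S}(v)$ is a complex of \emph{free} $A_G$-modules, and the only variables shared between two local factors are the edge variables, which act freely, so the tensor product over $A_G$ already computes the derived tensor product. A subsidiary technical point is the passage from the larger cellular (or unreduced \'Swi\k{a}tkowski) complex to the reduced complex $\Sw(G)$, generated at each vertex by $\emptyset$ and the differences $h - h'$; this is handled by an explicit chain homotopy, e.g.\ an algebraic discrete Morse matching that cancels the redundant half-edge generators. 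Organizing subdivision-invariance, edge stabilization, and the gluing induction into a single clean framework — the ``subdivisional spaces'' formalism of \cite{ADK} — is what keeps the bookkeeping under control.
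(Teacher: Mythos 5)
This statement is quoted verbatim from \cite[Theorem 4.5]{ADK}; the paper under review offers no proof of it, so there is no internal argument to measure your sketch against, and correctly identifying it as an imported result is the right call. Your outline is a reasonable reconstruction of the strategy actually used by An--Drummond-Cole--Knudsen: a local-to-global (star-by-star) decomposition with edge stabilization, organized through their subdivisional-spaces formalism, with the reduced complex obtained from a larger model by an explicit quasi-isomorphism --- though their argument is built around \'Swi\k{a}tkowski-style cube-complex models and homotopy colimits rather than Abrams' discretized configuration space, which plays only a supporting role, so treat that part of your sketch as a plausible variant rather than a description of their proof.
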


\subsection{Functoriality}
If $\iota:G \rightarrow G'$ is a simplicial embedding of graphs, then one obtains a natural pushforward map 
$$\iota_*: H_i\big(\UConf_n(G); \Z\big)\to H_i\big(\UConf_n(G'); \Z\big),$$
along with a natural lift to a map 
of differential bigraded modules \cite[Section 4.2]{ADK}
$$\widetilde\iota_*: \Sw(G)\to\Sw(G').$$
What is less obvious is that, if $\varphi$ is a contraction, then there is a natural map of differential bigraded modules
\[
\widetilde\varphi^\ast:\Sw(G') \rightarrow \Sw(G),
\]
which induces a map $$\varphi_*: H_i\big(\UConf_n(G'); \Z\big)\to H_i\big(\UConf_n(G); \Z\big)$$
by passing to homology \cite[Lemma C.7]{ADK}.

To describe $\widetilde\varphi^\ast$, we first consider the case where the number of edges of $G$ is one greater than the number of edges of $G'$;
we call such a contraction $\varphi$ a {\bf simple contraction}.
We identify the unique edge of $G$ that is contracted by $\varphi$ with the interval $[0,1]$.
Let $h_0$ (respectively $h_1$) be the half edge of $G$ consisting of the vertex $0$ (respectively $1$) and the edge $[0,1]$.
Let $w'\in G'$ be the image of the edge $[0,1]$.
Each edge of $G'$ is mapped to isomorphically by a unique edge of $G$, and similarly for half edges.  
This gives us a canonical ring homomorphism $A_{G'} \to A_G$ along with an $A_{G'}$-module homomorphism
$$\bigotimes_{v' \in \Vert(G')\smallsetminus\{w'\}} \widetilde{S}(v') \;\;\;\to \bigotimes_{v \in \Vert(G)\smallsetminus\{0,1\}} \widetilde{S}(v).$$
Given a half edge $h'$ of $G'$ with $v(h') = w'$, let $h$ be the unique half edge of $G$ mapping to $h'$.
We then define an $A_{G'}$-module homomorphism $$\widetilde{S}(w')\to \widetilde{S}(0)\otimes \widetilde{S}(1)$$
by the formula $$\emptyset\mapsto\emptyset\otimes\emptyset\and h'\mapsto
\begin{cases}
(h-h_0) \otimes \emptyset\;\;\;\text{if $v(h)=0$}\\
\emptyset \otimes (h-h_1)\;\;\;\text{if $v(h)=1$.}
\end{cases}$$
Tensoring these two maps together, we obtain the homomorphism $\widetilde\varphi^\ast:\Sw(G') \rightarrow \Sw(G)$,
and it is straightforward to check that this homomorphism respects the differential.
Arbitrary contractions may be obtained as compositions of simple contractions, and the induced homomorphism is independent of choice of factorization into simple contractions.
To summarize, we have the following result.

\begin{theorem}\label{summary}{\em \cite{ADK}}
There is a bigraded differential $\cTop$-module that assigns to each tree $T$ the reduced \'Swi\k{a}tkowski complex $\Sw(T)$.
The homology of this bigraded differential $\cTop$-module is the bigraded $\cTop$-module that assigns to each tree $T$
the bigraded Abelian group $H_{\bullet}\big(\UConf_{\star}(T)\big)$.
\end{theorem}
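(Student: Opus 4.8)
The plan is to promote the pointwise construction $T\mapsto \Sw(T)$, $\varphi\mapsto\widetilde\varphi^\ast$ described above to an honest functor from $\cTop$ to the category of bigraded differential abelian groups, and then to pass to homology. Since $\widetilde\varphi^\ast$ has already been defined for simple contractions and extended to arbitrary contractions by choosing a factorization into simple ones, the substance of the proof is to verify: (i) the map $\widetilde\varphi^\ast$ attached to a simple contraction commutes with $\partial$; (ii) isomorphisms of trees (the empty composition, including the identity) induce the obvious maps on $\Sw$; (iii) the map attached to a general contraction is independent of the chosen factorization into simple contractions; and (iv) the whole assignment is compatible with composition. Items (i) and (ii) are local computations on the tensor factors $\tS(v)$, immediate from the formulas defining $\widetilde\varphi^\ast$, and (iv) is automatic once (iii) is known, by concatenating factorizations.

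For the independence statement (iii) I would argue as follows. Every contraction $\varphi:T\to T'$ of trees factors into simple contractions: if $E$ is the set of edges contracted by $\varphi$, one may contract the edges of $E$ one at a time in any order, and each single step is a contraction because contracting one edge of a tree is always a surjective homotopy equivalence with connected fibers (and, since trees have no loops and contracting an edge of a tree never creates one, none of the loop-related subtleties of the footnotes intervene). Two factorizations of this kind correspond to two linear orders on the finite set $E$, and any two linear orders on a finite set are related by a sequence of transpositions of adjacent entries. Hence it suffices to treat $E=\{e,e'\}$ and to check that contracting $e$ and then $e'$ induces the same map $\Sw(T')\to\Sw(T)$ as contracting $e'$ and then $e$. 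One splits into the case where $e$ and $e'$ are disjoint and the case where they share a vertex; in both cases the two composite maps agree outside the tensor factors indexed by the at most three vertices incident to $e\cup e'$, so the check reduces to a direct comparison of two explicit local maps, keeping careful track of the $\Z^2$-grading and of the Koszul signs in the differential on $\bigotimes_v \tS(v)$. This verification is exactly what is carried out in the appendix of \cite{ADK} (see in particular \cite[Lemma C.7]{ADK}), so I would cite it rather than reproduce it.

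Granting functoriality at the chain level, the passage to homology is formal: each $\widetilde\varphi^\ast$ is a chain map, hence induces $\varphi_\ast:H_\bullet(\Sw(T'))\to H_\bullet(\Sw(T))$, and these maps inherit the functoriality, so $T\mapsto H_\bullet(\Sw(T))$ is a bigraded $\cTop$-module. Finally, Theorem \ref{swiacompute} supplies an isomorphism $H_\bullet(\Sw(T))\cong H_\bullet(\UConf_\star(T))$ for each $T$; one needs this isomorphism to be natural with respect to contractions, i.e. to intertwine $\varphi_\ast$ on the $\Sw$ side with the topologically defined map $\varphi_\ast$ on configuration spaces. This compatibility is again part of \cite[Lemma C.7]{ADK}: the chain map $\widetilde\varphi^\ast$ is constructed precisely so that it realizes the topological $\varphi_\ast$ after taking homology. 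Combining these two statements yields both the bigraded differential $\cTop$-module and the identification of its homology asserted in the theorem.

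The main obstacle is item (iii). Although it is a finite, essentially local check, it is the place where the words ``it is straightforward to check'' above are doing genuine work: one must be attentive to the bigrading and to the signs in the Koszul differential on the tensor product, and one must handle the two incidence configurations of $e$ and $e'$ separately. Everything else — chain-map property, behaviour on isomorphisms, composition, and the transition to homology via Theorem \ref{swiacompute} — is either a one-line local computation or a formal consequence.
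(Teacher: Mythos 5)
Your proposal follows essentially the same route as the paper: define $\widetilde\varphi^\ast$ on simple contractions, check it is a chain map, extend to arbitrary contractions by factoring into simple ones with independence of the factorization (the check the paper, like you, ultimately defers to \cite[Lemma C.7]{ADK}), and then pass to homology via Theorem \ref{swiacompute}. Your extra detail on reducing the independence check to adjacent transpositions of two contracted edges is a correct fleshing-out of what the paper leaves as ``straightforward,'' so there is nothing to fix.
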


% To summarize, we have a bigraded $\cTop$-module $H_{\bullet}\big(\UConf_{\star}(-)\big)$ which is isomorphic, via Theorem \ref{swiacompute}
% to the homology of the bigraded differential $\cTop$-module $\Sw(-)$.

\subsection{Configuration spaces of trees}
The purpose of this section is to prove Theorem \ref{treefing}.

\begin{proof}[Proof of Theorem \ref{treefing}]
Given a tree $T$ and a pair of natural numbers $i$ and $n$, let $\Sw(T)_{i,n}$ be the degree $(i,n)$ summand of the 
reduced \'Swi\k{a}tkowski complex. 
We will show that the $\cTop$-module taking a tree $T$ to the abelian group $\Sw(T)_{i,n}$
is generated in degrees $\leq n+i$.  Smallness will then follow from Theorem \ref{summary},
and finite generation from Theorem \ref{Noetherian}.

% Let $T$ be given with $|T|\geq n+2i$.
The group $\Sw(T)_{i,n}$ is generated by elements of the form 
$$\sigma := e_1\cdots e_{n-i}\;\; \bigotimes_{j=1}^{i}\, (h_{j0} - h_{j1})\;\;\; 
\otimes \bigotimes_{v\notin\{v_1,\ldots,v_i\}} \!\!\!\emptyset,$$
where $e_1,\ldots,e_{n-i}$ are edges (not necessarily distinct), $v_1,\ldots,v_i$ are vertices (distinct), and for each $j$,
$h_{j0}$ are $h_{j1}$ are half edges at the vertex $v_j$.
For a particular $\sigma$ of this form, we will call $\{v_1,\ldots,v_i\}$ the set of {\bf distinguished vertices}.
Without loss of generality, we may assume that there is some integer $r$ with $0\leq r\leq i$ such that $v_j$
is adjacent to some other distinguished vertex if and only if $j\leq r$.  We may also assume that, if $j\leq r$,
$e(h_{j1})$ connects $v_j$ to another distinguished vertex; if not, then $\sigma$ may be written as a difference
of classes of this form.

We call an edge $e$ a {\bf distinguished edge} if one of the following four conditions hold:
\begin{itemize}
\item $e$ connects two distinguished vertices
\item $e = e_k$ for some $k\leq n-i$
\item $e = e(h_{j0})$ for some $j\leq i$
\item $e = e(h_{j1})$ for some $j\leq i$.
\end{itemize}
We claim that there are at most $n+i$ distinguished edges.
Indeed, there are at most $r$ edges that connect two distinguished vertices\footnote{This is because the induced subgraph on $v_1,\ldots,v_r$ is a forest;
equality is attained iff $r=0$.}
and these include $e(h_{j1})$ for every $j\leq r$.  This means that the maximum possible number of distinguished
edges is $r + (n-i) + i + (i-r) = n+i$.

Let $T$ be given with $|T| > n+i$.  Since there are at most $n+i$ distinguished edges, we may choose an edge $e$
which is not distinguished.
Let $T' := T/e$ be the tree obtained from $T$ by contracting $e$, and let $\varphi:T\to T'$ be the canonical simple
contraction.  
Let $e_k'$ be the image of $e_k$ in $T'$, $v_j'$ the image of $v_j$ in $T'$, $h_{j0}'$
the image of $h_{j0}$ in $T'$, and $h_{j1}'$ the image of $h_{j1}$ in $T'$.  Let
$$\sigma' := e'_1\cdots e'_{n-i}\;\; \bigotimes_{j=1}^{i}\, (h_{j0}' - h_{j1}')\;\;\; 
\otimes \bigotimes_{v'\notin\{v'_1,\ldots,v'_i\}} \!\!\!\emptyset\;\;\;\in\;\;\;\Sw(T')_{i,n}.$$
We claim that $\sigma=\widetilde\varphi^*\sigma'$.

If $e$ is not incident to any vertex $v_j$, this is obvious.
The interesting case occurs when $e$ is incident to one of the distinguished
vertices.  Assume without loss of generality that it is incident to $v_1$, and let $w$ be the other end point of $e$.
% Note that, since $e$ is not distinguished, neither is $w$.
Let $h$ be the half edge of $T$ with $e(h) = e$ and $v(h) = v_1$.
Applying the map $\varphi^*$ replaces each $e_k'$ with $e_k$.  When $j>1$, it replaces $h_{j0}'$ with $h_{j0}$
and $h_{j1}'$ with $h_{j1}$.  It replaces $h_{10}'$ with $h_{10} - h$ and $h_{11}'$ with $h_{11}-h$.
This means that it replaces $h_{j0}' - h_{j1}'$ with $h_{j0} - h_{j1}$, and therefore that 
$\widetilde\varphi^*\sigma' = \sigma$.

We thus conclude that every element of $\Sw(T)_{i,n}$ is a linear combination
of elements in the images of map associated with simple contractions; this completes the proof.
\end{proof}

\begin{remark}
Chettih and L\"utgehetmann prove that homology groups of ordered configuration spaces
of trees are generated by products of what they call {\bf basic classes} \cite[Theorem A]{CL}.
One can produce an alternative proof of Theorem \ref{treefing} by using this result, along with the coinvariant map
that takes the homology of an ordered configuration space surjectively only the homology of the corresponding
unordered configuration space, sending basic classes to the {\bf star classes} of \cite{ADK}.
We prefer to work with the reduced \'Swi\k{a}tkowski complex because the proof of Theorem \ref{treefing}
serves as a model for the proof of Theorem \ref{conetreefing}, where we will not have any analogue of the 
Chettih--L\"utgehetmann result available to us.
\end{remark}

\subsection{Cones over trees}\label{sec:cones}
For any tree $G$, we define the {\bf cone over \boldmath{$G$}} to be the graph $\cone(G)$ obtained from $G$ 
by adding one new vertex $p$ along with an edge connecting $p$ to each of the original vertices.
More precisely, $$\Vert(\cone(T)) := \Vert(T)\sqcup\{p\}\and \Edge(\cone(T)) := \Edge(T)\sqcup\{e_v\mid v\in\Vert(T)\},$$
where $e_v$ is an edge from $v$ to $p$.

Suppose that $\varphi:T\to T'$ is a contraction.  We would like to say that $\varphi$ induces a contraction from $\cone(T)$
to $\cone(T')$, but this is not quite right.  Indeed, contractions are by definition homotopy equivalences, and $\cone(T)$ cannot
be homotopy equivalent to $\cone(T')$ unless $\varphi$ is an isomorphism.  
Instead, what happens is that $\varphi$ induces a contraction from $\pi_\varphi:\cone(T)\to G_\varphi$, where $G_\varphi$ defined by setting
$$\Vert(G_\varphi) := \Vert(T') \sqcup \{p'\} \and \Edge(G_\varphi) := \Edge(T') \sqcup \{e'_w\mid w\in \Vert(T)\},$$
where $e_w$ is an edge from $\varphi(w)$ to $p'$.  In particular, the number of edges connecting a vertex $w'$ to $p'$
is equal to the number of vertices in the preimage of $w'$.

We get around this technical difficulty by working with rooted trees.
Let $(T,v)$ be a rooted tree, and consider the partial order on $\Vert(T)$ introduced in Section \ref{sec:trees}.
Let $\varphi:(T,v)\to(T',v')$ be a contraction.  For each vertex $w'$ of $T'$, there is a unique maximal vertex $w\in\varphi^{-1}(w')$.
This allows us to define an embedding $\iota^\varphi:\cone(T')\to G_\varphi$ by sending $e_{w'}$ to $e'_w$.

\begin{example}
Suppose that $\varphi$ is the contraction of rooted trees depicted in the figure on the left in Example \ref{Barter}.
Then $G_\varphi$ is the graph shown below, where $T'$ is the vertical edge and $p'$ is the vertex on the right.
The embedding $\iota^\varphi$ identifies the cone over $T'$ with the triangle
obtained by deleting the lower of the two curved edges.
\begin{figure}[ht]
\begin{center}
\includegraphics[width=1cm]{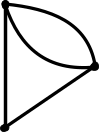}
\end{center}
% \caption{}
\end{figure}
\end{example}

The embedding $\iota_\varphi$ induces a map $$\iota^\varphi_*:H_i\big(\UConf_n(\cone(T')); \Z\big)\to H_i\big(\UConf_n(G_\varphi); \Z\big)$$
and the contraction $\pi_\varphi$ induces a map $$\pi_\varphi^*:H_i\big(\UConf_n(G_\varphi); \Z\big)\to H_i\big(\UConf_n(\cone(T)); \Z\big),$$
and we define $$\varphi^* := \pi_\varphi^*\circ \iota^\varphi_*:H_i\big(\UConf_n(\cone(T')); \Z\big)\to H_i\big(\UConf_n(\cone(T)); \Z\big).$$
Similarly, we define
$$\widetilde\varphi^* := \widetilde\pi_\varphi^*\circ \widetilde\iota^\varphi_*:\Sw(\cone(T'))\to \Sw(\cone(T)).$$

We can then state the following analogue of Theorem \ref{summary}.

\begin{theorem}\label{cone-summary}
There is a bigraded differential $\cRTop$-module that assigns to each rooted tree $(T,v)$ the reduced \'Swi\k{a}tkowski complex $\Sw(\cone(T))$.
The homology of this bigraded differential $\cTop$-module is the bigraded $\cRTop$-module that assigns to each rooted tree $(T,v)$
the bigraded Abelian group $H_{\bullet}\big(\UConf_{\star}(T)\big)$.
\end{theorem}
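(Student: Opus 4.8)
The plan is to construct the required $\cRTop$-module by assembling the maps $\widetilde\varphi^* := \widetilde\pi_\varphi^*\circ\widetilde\iota^\varphi_*$ defined just above the statement, and to check that this assignment is functorial and that its homology behaves as claimed. First I would verify that $\widetilde\varphi^*$ is a map of bigraded differential $A$-modules: the component $\widetilde\pi_\varphi^*$ is a composite of the maps attached to simple contractions described in Section \ref{sec:cones} (via Theorem \ref{summary}), so it respects the differential; the component $\widetilde\iota^\varphi_*$ is the An--Drummond-Cole--Knudsen pushforward along a simplicial embedding \cite[Section 4.2]{ADK}, which is also a chain map. Their composite is therefore a chain map, so it descends to a map on homology, which by Theorem \ref{swiacompute} is identified with the topological map $\varphi^* = \pi_\varphi^*\circ\iota^\varphi_*$ on configuration-space homology.

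Next I would check functoriality: given contractions $\varphi:(T,v)\to(T',v')$ and $\psi:(T',v')\to(T'',v'')$, one must show $\widetilde\varphi^*\circ\widetilde\psi^* = \widetilde{(\psi\circ\varphi)}^*$ and that identities go to identities. The identity claim is immediate. For composition, the point is to compare $G_{\psi\varphi}$ with the two-step composite through $G_\varphi$ and $G_\psi$; the key combinatorial fact is that the maximal vertex of $(\psi\circ\varphi)^{-1}(w'')$ is the maximal vertex of $\varphi^{-1}$ applied to the maximal vertex of $\psi^{-1}(w'')$ — this is exactly where the rooted structure (the partial order on $\Vert(T)$ with the root maximal) is used, and it is the analogue of the two mutually inverse constructions in the proof of Proposition \ref{opposite}. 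Granting this, the embeddings $\iota^\varphi$ and the contractions $\pi_\varphi$ compose compatibly, and one gets the desired equality of chain maps after unwinding the definitions; I would present this as a diagram chase rather than a formula-by-formula computation.

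The main obstacle is precisely this composition-compatibility step: $\cone$ is not a functor on $\cT$ (as the text stresses in Remark \ref{more or less} and Section \ref{sec:cones}), so one cannot simply invoke functoriality of $\cone$ and of $\Sw$. One must instead track the auxiliary graphs $G_\varphi$ and verify that the "insert extra parallel edges to $p$, then collapse them via $\iota$" bookkeeping is associative. A clean way to organize this is to factor every contraction into simple contractions and reduce the identity $\widetilde\varphi^*\circ\widetilde\psi^* = \widetilde{(\psi\circ\varphi)}^*$ to the case of simple contractions, using the already-cited independence of $\widetilde\varphi^*$ on the choice of factorization; then only finitely many local configurations near a single contracted edge and near the cone point need to be checked. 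Once functoriality is established, the statement follows: the assignment $(T,v)\mapsto\Sw(\cone(T))$ with these maps is a bigraded differential $\cRTop$-module, and passing to homology term by term (homology commutes with the direct-sum decomposition by bidegree) yields the bigraded $\cRTop$-module $(T,v)\mapsto H_\bullet\big(\UConf_\star(\cone(T))\big)$, as claimed. (I note that the statement as printed writes $H_\bullet(\UConf_\star(T))$, which should read $H_\bullet(\UConf_\star(\cone(T)))$; I would correct this typo.)
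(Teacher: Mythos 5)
Your proposal is correct and follows essentially the same route as the paper, which proves Theorem \ref{cone-summary} simply by the construction preceding it: the auxiliary graph $G_\varphi$, the root-determined embedding $\iota^\varphi$ (maximal vertex in each preimage), and $\widetilde\varphi^*:=\widetilde\pi_\varphi^*\circ\widetilde\iota^\varphi_*$, with the chain-map and functoriality facts imported from \cite{ADK} and the compatibility of maximal vertices under composition being exactly the mechanism behind Proposition \ref{opposite}. Your explicit verification of $\widetilde\varphi^*\circ\widetilde\psi^*=\widetilde{(\psi\circ\varphi)}^*$ (reduction to simple contractions, maximal-vertex compatibility) and your correction of the $\cone(T)$ typo fill in details the paper leaves implicit, but do not constitute a different argument.
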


\begin{remark}
We note that, for both of the $\cRTop$-modules in the statement of Theorem \ref{cone-summary}, the Abelian group assigned to a rooted tree $(T,v)$
depends only on the tree $T$, but the homomorphisms between these groups depend on the root.
\end{remark}

\subsection{Configuration spaces of cones over trees}
The purpose of this section is to prove Theorem \ref{conetreefing}.

\begin{proof}[Proof of Theorem \ref{conetreefing}]
The proof of this theorem is similar to the proof of Theorem \ref{treefing}, but there are some subtle differences in the details.
Given a tree $T$ and a pair of natural numbers $i$ and $n$, let $\Sw(\cone(T))_{i,n}$ be the degree $(i,n)$ summand of the 
reduced \'Swi\k{a}tkowski complex. 
We will show that the $\cRTop$-module taking a rooted tree $(T,v)$ to the abelian group $\Sw(T)_{i,n}$
is generated in degrees $\leq n+i$.  Smallness will then follow from Theorem \ref{cone-summary},
and finite generation from Theorem \ref{Noetherian}.

The group $\Sw(\cone(T))_{i,n}$ is generated by classes of form 
$$\sigma := e_1\cdots e_{n-i}\;\; \bigotimes_{j=1}^{i}\, (h_{j0} - h_{j1})\;\;\; 
\otimes \bigotimes_{v\notin\{v_1,\ldots,v_i\}} \!\!\!\emptyset,$$
where $e_1,\ldots,e_{n-i}$ are edges of $\cone(T)$ (not necessarily distinct), $v_1,\ldots,v_i$ are vertices of $\cone(T)$ (distinct), and, for each $j$,
$h_{j0}$ are $h_{j1}$ are half edges of $\cone(T)$ at the vertex $v_j$.
For a particular $\sigma$ of this form, we will call $\{v_1,\ldots,v_i\}$ the set of {\bf distinguished vertices}.
Here is the first difference between this proof and the proof of Theorem \ref{treefing}:  we may assume that, for all $j$, $e(h_{j1}) = e_{v_j}$,
the edge connecting $v_j$ to the cone point $p$.

Let us first treat the case where $p$ is not one of the distinguished vertices.
We call an edge $e$ of $T$ a {\bf distinguished edge} if one of the following three conditions hold:
\begin{itemize}
\item $e$ connects two distinguished vertices
\item $e = e_k$ for some $k\leq n-i$
\item $e = e(h_{j0})$ for some $j\leq i$.
\end{itemize}
Note that distinguished edges are edges of $T$, not of $\cone(T)$.  We claim that there are at most $n+i$ distinguished edges.
Indeed, there are at most $i$ edges that connect two distinguished vertices (with equality iff $i=0$), 
so the maximum possible number of distinguished edges is $i+(n-i)+i = n+i$.

Next, let's treat the case where $p$ is one of the distinguished vertices.  Without loss of generality, we will assume that $p = v_i$.
Let $w_0,w_1\in\Vert(T)$ be the endpoints of the edges $e(h_{i0})$ and $e(h_{i1})$.  
We then call an edge $e$ of $T$ 
a {\bf distinguished edge} if one of the following five conditions hold:
\begin{itemize}
\item $e$ connects two distinguished vertices of $T$
\item $e = e_k$ for some $k\leq n-i$
\item $e = e(h_{j0})$ for some $j\leq i-1$
\item $e$ connects $w_0$ to a vertex that is greater than $w_0$ in the partial order on $\Vert(T)$
\item $e$ connects $w_1$ to a vertex that is greater than $w_1$ in the partial order on $\Vert(T)$.
\end{itemize}
We again claim that there are at most $n+i$ distinguished edges.
Indeed, there are at most $i-1$ edges that connect two distinguished vertices (with equality iff $i=1$).
Furthermore, every vertex of $T$ has a unique cover in the partial order, so there is at most one edge connecting
$w_0$ to a vertex greater than $w_0$ (with equality iff $w_0$ is not the root), and likewise for $w_1$.  Thus the maximum
possible number of distinguished edges is $(i-1)+(n-i)+(i-1)+1+1 = n+i$.

We now proceed as in the proof of Theorem \ref{treefing}.
Let $T$ be given with $|T| > n+i$.  Since there are at most $n+i$ distinguished edges, we may choose an edge $e$
which is not distinguished.
Let $T' := T/e$ be the tree obtained from $T$ by contracting $e$, and let $\varphi:T\to T'$ be the canonical simple
contraction.  We define $\sigma'$ as before, and we claim that $\sigma=\widetilde\varphi^*\sigma'$.

The argument is basically the same, but there is a new subtlety that arises if $p=v_i$ and either $w_0$ or $w_1$ is one of the endpoints of $e$.
Assume without loss of generality that $w_0$ is an endpoint of $e$, and let $u$ be the other endpoint.  Since $e$ is not a distinguished edge,
we have $u<w_0$ in the partial order on $\Vert(T)$, and this is exactly the condition that we need to ensure that $\sigma=\widetilde\varphi^*\sigma'$.
\end{proof}

\begin{remark}\label{biconnected}
For any tree $T$, $\cone(T)$ is biconnected, which implies that $H_1\big(\UConf_n(\cone(T))\big)$ is canonically
isomorphic to $H_1\big(\UConf_2(\cone(T))\big)$ \cite[Lemma 3.12]{KP}.  In particular, this means that 
the $R\cTop$-module
\[
(T,v) \mapsto H_1\big(\UConf_n(\cone(T))\big)
\]
is in fact $3$-small.
\end{remark}

\subsection{Examples}\label{config-examples}

We next give a number of examples to illustrate Theorems \ref{treefing} and \ref{conetreefing}.

\begin{example}
Consider the tree $K_{m,1}$ with one central vertex connected to $m$ satellites, obtained from sprouting at a single isolated vertex.
Theorems \ref{actual polynomial 2} and \ref{treefing}
together imply that $\dim H_1\big(\UConf_n(K_{m,1}); \Q\big)$ is a polynomial in $m$ of degree at most $n+1$.
It was proved independently in \cite[Theorem 2.6]{Gh-robotics} and \cite[Theorem 0.1]{Sw-graphs}
that the space $\UConf_n(G)$ is homotopy equivalent to a simplicial complex of dimension equal to the number
of vertices of $G$ of degree at least 3.  In particular, $\UConf_n(K_{m,1})$ is homotopy equivalent to a graph.
For any $m$, we have \cite[Theorem 2]{Gal}
\[
\sum_{n\geq 0} \chi\big(\UConf_n(K_{m,1})\big) t^n = \frac{1-(m-1)t}{(1-t)^m}.
\]
From these two facts, we can deduce that
\[
\dim H_1\big(\UConf_n(K_{m,1}); \Q\big) = 1 - \binom{m-1+n}{n} + (m-1)\binom{m-2+n}{n-1},
\]
which is in fact a polynomial in $m$ of degree $n$. Note that the computation of the first Betti numbers of configuration spaces of star graphs has appeared in various places throughout the literature, including \cite{Gh-robotics,ADK,MaSa,Ramos-Graph,FS}.
\end{example}

\excise{
\begin{example}
Fix natural numbers $n$ and $i$. 
For any natural number $m$, let $I_m$ be the standard path of length $m$ on the vertex set $\{0,1,\ldots,m\}$ with root 0.
Then the cone over $I_m$ is a fan graph, and 
Theorems \ref{actual polynomial}, Remark \ref{other-cats-growth}, and Theorem \ref{conetreefing}
combine to say that the $i^\text{th}$ Betti number of the configuration space $\UConf_n(\cone(I_m))$ is eventually a polynomial in $m$
of degree at most $n+i$.
\end{example}
}

\begin{example}
Next, consider the graph $\cone(K_{m,1})$.  
The graph $\cone(K_{m,1})$ has two vertices of degree greater than 2, so
the configuration space of $\cone(K_{m,1})$ is homotopy equivalent to a 2-dimensional simplicial complex. 
Theorems \ref{actual polynomial 2} and \ref{conetreefing}, 
along with Remarks \ref{other-cats-growth} and \ref{biconnected}, together imply that 
$\dim H_1\big(\UConf_n(\cone(K_{m,1})); \Q \big)$
is a polynomial in $m$ of degree at most $3$ and $\dim H_2\big(\UConf_n(\cone(K_{m,1})); \Q \big)$
is a polynomial in $m$ of degree at most $n+2$.
For any $m$, we have \cite[Lemma 3.14]{KP} 
\[
\dim H_1\big(\UConf_n(\cone(K_{m,1})); \Q \big) = \binom{m+1}{2},
\]
which is in fact a polynomial in $m$ of degree 2.
We also have \cite[Theorem 2]{Gal}
\[
\sum_{n\geq 0} \chi\big(\UConf_n(\cone(K_{m,1}))\big) t^n = \frac{(1-mt)^2}{(1-t)^{m+1}},
\]
which implies that
\[
\chi\big(\UConf_n(\cone(K_{m,1}))\big) = \binom{m+n}{n} + 2m \binom{m+n-1}{n-1} + m^2  \binom{m+n-2}{n-2}.
\]
Thus we conclude that
$\dim H_2\big(\UConf_n(\cone(K_{m,1})); \Q \big)$ is in fact a polynomial in $m$ of degree $n$.
\end{example}

\begin{example}
More generally, the techniques of Ko and Park will allow us to compute the first Betti numbers of arbitrary cones over trees. 
Remark \ref{biconnected} tells us that these Betti numbers should be bounded above by a cubic polynomial in $|T|$,
and that they should grow as a polynomial of degree at most 3 when we subdivide or sprout.

Fix a vertex $v$ of $T$ and write $\{T_i\}_{i = 1}^{\deg(v)}$ for the $\deg(v)$ subtrees of $T$ obtained by taking the closures of the connected components after removing $v$. If we write $b_{\cone(T)}(n)$ for the first Betti number of $\UConf_n(\cone(T))$, then \cite[Lemma 3.14]{KP} tells us that
\[
b_{\cone(T)}(n) = \left(\sum_i b_{\cone(T_i)}(n)\right) + \binom{\deg(v)}{2}.
\]
If $T$ is homeomorphic to a line segment, then $\cone(T)$ is a circle and $b_{\cone(T)}(n) = 1$. 
Applying the above recursion for every vertex, we find that
\[
b_{\cone(T)}(n) \;\;=\;\; |T| \;\;+ \sum_{v\in\Vert(T)} \binom{\deg(v)}{2}.
\]
Note that this expression is bounded by a polynomial in $|T|$ of degree 2, since
\[
\sum_{v\in\Vert(T)} \binom{\deg(v)}{2} \;\; \leq\;\; \frac{1}{2}\sum_{v\in\Vert(T)} \deg(v)^2\;\;  \leq\;\;  \frac 1 2 \left(\sum_{v\in\Vert(T)}\deg(v)\right)^2 
=\;\; 2|T|^2.
\]
One also observes from this expression that $b_{\cone(T)}(n)$ grows linearly under subdivision and quadratically under sprouting.
\end{example}

\section{Kazhdan-Lusztig coefficients}\label{sec:KL}
Let $G$ be a graph.  Let $R_G$ be the $\C$-subalgebra of rational functions in $\{x_v\mid v\in\Vert(G)\}$
generated by the elements $\left\{\frac{1}{x_v-x_w}\Bigmid v\neq w\;\text{adjacent}\right\}$, and let $X_G := \Spec R_G$.
The ring $R_G$ is called the {\bf Orlik-Terao algebra} of $G$ and the variety $X_G$ is called the {\bf reciprocal plane} of $G$.
We will be interested in the intersection homology group $\IH_{2i}(X_G)$, which is a complex vector space whose dimension
is equal to the coefficient of $t^i$ in the Kazhdan-Lusztig polynomial of the matroid associated with $G$ \cite[Theorem 3.10 and Proposition 3.12]{EPW}.

If $\varphi:G\to G'$ is a contraction, we obtain a canonical map $\varphi^*:\IH_{2i}(X_{G'})\to\IH_{2i}(X_G)$, and these maps compose in the expected way
\cite[Theorem 3.3(1,3)]{fs-braid}.  The matroid associated with any tree is Boolean and Boolean matroids have trivial Kazhdan-Lusztig polynomials 
\cite[Corollary 2.10]{EPW}, so we do not obtain interesting $\cTop$-modules by letting $G$ be a tree.  However, by letting $G$ be the cone
over a tree, we find many examples of graphs with interesting Kazhdan-Lusztig coefficients, including fan graphs \cite{fan-wheel-whirl}
and thagomizer graphs \cite{thag}.  The purpose of this section is to study the corresponding $\cTop$-modules.

\begin{remark}\label{simplification}
If $G$ is any graph, we define the {\bf simplification} of $G$ to be the graph obtained by deleting loops
and identifying any two edges with the same end points.  It is clear from the definition of $R_G$ that the ring $R_G$, the variety $X_G$,
and the vector space $\IH_{2i}(X_G)$ do not change when $G$ is replaced by its simplification.  This stands in stark contrast to homology
groups of configuration spaces, and explains why we will not need to work with rooted trees (see Remark \ref{more or less}).
\end{remark}

\subsection{The spectral sequence}\label{sec:ss}
The intersection homology group $\IH_{2i}(X_G)$ can be computed by means of a certain spectral sequence, 
which we now describe.
For any graph $G$, let $\OS^\bullet(G)$ be the {\bf Orlik-Solomon algebra} of the matroid associated with $G$.
For any natural number $d$, we will denote the linear dual of $\OS^d(G)$ by $\OS_d(G)$.
For the purposes of this paper, we will need to know five things about the Orlik-Solomon algebra:
\begin{itemize}
\item $\OS^1(G)$ is spanned by classes $\{x_e\mid e\in\Edge(G)\}$, subject to the relations
that $x_e=0$ if $e$ is a loop and $x_e = x_{e'}$ if $e$ and $e'$ have the same endpoints.
\item $\OS^\bullet(G)$ is generated as an algebra by $\OS^1(G)$.
\item The Orlik-Solomon algebra of a graph is canonically isomorphic to that of its simplification.
\item If $G'$ is a contraction of $G$, we obtain a canonical map $\OS^\bullet(G)\to\OS^\bullet(G')$ 
by killing the generators indexed by contracted edges.  This in turn induces a map $\OS_\bullet(G')\to \OS_\bullet(G)$.
\item If $G$ is the disjoint union of $G_1$ and $G_2$ or if $G$ is obtained by gluing $G_1$ and $G_2$
along a single vertex, then $\OS^\bullet(G) \cong \OS^\bullet(G_1)\otimes \OS^\bullet(G_2)$.
\end{itemize}

A {\bf flat} of $G$ is a subgraph $F\subset G$ with the same
vertex set and that property that, if $F$ contains all but one edge of some cycle in $G$, then it contains the last edge, as well.
If $F$ is a flat, we define $G/F$ to be
the graph obtained by simultaneously contracting all of the edges in $F$.\footnote{It is slightly confusing to note that, if $F$ contains
a cycle, the natural graph morphism from $G$ to $G/F$ is not a contraction in the sense of Section \ref{sec:trees} because it is not
a homotopy equivalence.  We will refrain from using the word ``contraction'' in any sense other than that in which we have defined it.}
  The {\bf rank} of $F$ is equal to the number of vertices
minus the number of connected components, and the {\bf corank} of $F$, denoted $\crk F$, is
the rank of $G$ minus the rank of $F$.

\begin{theorem}\label{spectral}{\em \cite[Theorems 3.1 and 3.3]{fs-braid}}
For any graph $G$ and positive integer $i$, there is a first quadrant homological spectral sequence $E(G,i)$ converging to $\IH_{2i}(X_G)$,
with $$E(G,i)^1_{p,q} = \bigoplus_{\crk F = p}\OS_{2i-p-q}(F) \otimes \IH_{2(i-q)}(X_{G/F}).$$
If $\varphi:G\to G'$ is a contraction, there is a canonical map $\varphi^*:E(G',i)\to E(G,i)$ of spectral sequences, composing in the expected way, and converging to the aforementioned map $\IH_{2i}(X_{G'})\to \IH_{2i}(X_G)$.
The map $E(G',i)^1_{p,q}\to E(G,i)^1_{p,q}$ kills the $F$-summand unless $F$ contains all of the contracted edges.  
In this case, the image of $F$
in $G'$ is a flat $F'$ of $G'$, and $G'/F'$ is canonically isomorphic to $G/F$.  The map takes
the $F$-summand of $E(G,i)^1_{p,q}$ to the $F'$-summand of $E(G',i)^1_{p,q}$ by 
the canonical map $\OS_{2i-p-q}(F)\to\OS_{2i-p-q}(F')$ tensored with the identity map on $\IH_{2(i-q)}(X_{G/F})$.
\end{theorem}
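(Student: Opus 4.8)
Theorem~\ref{spectral} is quoted from \cite{fs-braid}, so I will only sketch how one could reconstruct it; the strategy is geometric, building the spectral sequence from a resolution of the reciprocal plane compatible with its natural stratification and reading off functoriality from the behaviour of that resolution under the closed embeddings $X_{G/F}\hookrightarrow X_G$. The starting point is the stratification of $X_G=\Spec R_G$ by the lattice of flats of the graphical matroid of $G$: for each flat $F$ there is a locally closed stratum $S_F\subseteq X_G$ whose closure is canonically the reciprocal plane $X_{G/F}$ of the contraction, of dimension $\crk F$, with $X_{G/F}\subseteq X_{G/F'}$ precisely when $F\geq F'$. The transverse structure of $X_G$ along $S_F$ is controlled by the localized arrangement, so that the restriction of $\IC_{X_G}$ near $S_F$ is built out of $\IC_{X_{G/F}}$ and of the Orlik-Solomon algebra of the localized matroid $M|_F$; identifying the latter with $\OS^\bullet(F)$ in the normalization used here, and with the correct homological degrees, rests on the combinatorial description of the intersection cohomology sheaf of a reciprocal plane from \cite{EPW}.

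With this in hand I would fix a resolution of singularities $\pi_G\colon\widetilde X_G\to X_G$ adapted to the stratification --- a suitable iterated blow-up along the closed strata --- and apply the decomposition theorem to $R\pi_{G,*}\Ql$, whose summands other than $\IC_{X_G}$ are supported on the $X_{G/F}$ with multiplicity spaces built from the localized Orlik-Solomon algebras. Combining this decomposition with the spectral sequence of the filtration of $\widetilde X_G$ by the preimages of the skeleta $\bigcup_{\crk F\leq p}X_{G/F}$, and retaining the part of $H_\bullet(\widetilde X_G)$ governed by $\IC_{X_G}$, one obtains a first-quadrant homological spectral sequence with
\[
E(G,i)^1_{p,q}=\bigoplus_{\crk F=p}\OS_{2i-p-q}(F)\otimes\IH_{2(i-q)}(X_{G/F})
\]
converging to $\IH_{2i}(X_G)$. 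Alternatively, one can dispense with the geometry and write down directly a filtered chain complex over the poset of flats with this $E^1$ page and total homology $\IH_{2i}(X_G)$, arranged so that its Euler characteristic reproduces the Kazhdan-Lusztig recursion of \cite{EPW}.

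For functoriality, a contraction $\varphi\colon G\to G'$ contracts a forest whose flat closure is some flat $F_0$, and thus identifies $X_{G'}$ with the closed stratum $X_{G/F_0}\subseteq X_G$; it also matches the flats of $G'$ with the flats $F\geq F_0$ of $G$, with $G'/F'\cong G/F$ canonically for $F'$ the image of $F$. I would then check that the resolution can be chosen functorially for contractions --- the strict transform of $X_{G'}$ in $\widetilde X_G$ is $\widetilde X_{G'}$, and $\pi_G$ restricts over $X_{G'}$ to $\pi_{G'}$ away from the excess locus --- so that restriction along $X_{G'}\hookrightarrow X_G$ carries the stratification and the decomposition of $\widetilde X_{G'}$ into those of $\widetilde X_G$. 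Passing to spectral sequences gives a map $\varphi^*\colon E(G',i)\to E(G,i)$ which on the $E^1$ page carries, for each flat $F\geq F_0$ with image $F'$, the $F'$-summand of $E(G',i)$ to the $F$-summand of $E(G,i)$ by the canonical map $\OS_\bullet(F')\to\OS_\bullet(F)$ tensored with the identification $\IH_{2(i-q)}(X_{G'/F'})=\IH_{2(i-q)}(X_{G/F})$, and which vanishes on every summand not of this form; on abutments it is the pullback $\IH_{2i}(X_{G'})\to\IH_{2i}(X_G)$, and compatibility with composition follows from functoriality of all the ingredients.

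The hard part is exactly this last step: producing a resolution of $X_G$ that can be chosen compatibly across all contractions simultaneously, and controlling the decomposition-theorem summands and the maps between them induced by the closed embeddings $X_{G/F}\hookrightarrow X_G$ well enough to pin down both the support and the precise formula of the induced maps on $E^1$. A secondary but genuine difficulty, already mentioned, is ensuring that the transverse invariant attached to each stratum is the Orlik-Solomon algebra $\OS^\bullet(F)$ in the right normalization rather than some a priori different invariant of the local arrangement, which again goes back to the fine structure of reciprocal planes.
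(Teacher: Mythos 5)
This statement is not proved in the paper at all: it is imported wholesale from \cite[Theorems 3.1 and 3.3]{fs-braid}, so there is no internal argument to measure you against, and your text is likewise a plan rather than a proof. Measured against the construction in the cited source, your plan is both heavier than necessary and off in a key geometric identification. The spectral sequence there does not come from a resolution of singularities plus the decomposition theorem; it is the (compactly supported) hypercohomology spectral sequence of the stratification of $X_G$ by flats, applied directly to the intersection chain sheaf. In that stratification the closure of the stratum indexed by a flat $F$ is the reciprocal plane of the flat itself (the localization), of dimension $\rk F$, while the contraction $X_{G/F}$, of dimension $\crk F$, appears as the \emph{transverse slice} to that stratum; you assert instead that the closure of $S_F$ is canonically $X_{G/F}$ of dimension $\crk F$, which has the two roles swapped. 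Getting this right is exactly what produces the shape of the $E^1$ page: the stratum contributes $\OS_\bullet(F)$ (cohomology of the arrangement complement of the flat), the transverse IC data contributes $\IH_\bullet(X_{G/F})$ via the results of \cite{EPW} identifying stalks of the IC sheaf with intersection homology of reciprocal planes of contractions, and purity handles the degree bookkeeping. Functoriality is then obtained from restriction of IC sheaves along the closed embeddings $X_{G'}\hookrightarrow X_G$ induced by contractions (the choice-independence being the content of the footnoted remark), not from a system of resolutions chosen compatibly for all contractions.

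The genuine gaps in your version are therefore: (a) the misidentification of the stratum closures just described, which undermines the justification you give for the $E^1$ term even though you copy the correct formula; (b) the central step you yourself flag --- a resolution of $X_G$ adapted to the stratification and chosen functorially for all contractions simultaneously, with control of the decomposition-theorem multiplicity spaces --- is left entirely open, and there is no reason to believe such a compatible system exists in the form you need, nor is it needed; and (c) the proposed fallback of ``writing down directly a filtered chain complex over the poset of flats with this $E^1$ page'' whose Euler characteristic matches the Kazhdan--Lusztig recursion does not establish that anything converges to $\IH_{2i}(X_G)$; matching Euler characteristics is far weaker than producing a spectral sequence with the stated abutment and functoriality. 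As it stands the proposal is a sketch of a plausible-sounding but misdirected strategy, not a reconstruction of the cited theorem.
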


\subsection{Orlik-Solomon algebras of trees and their cones}
Suppose that $\varphi:T\to T'$ is a contraction of trees.  Since the Orlik-Solomon algebra is functorial with respect to contractions,
we have a $\cTop$-module $\OS_d$ that takes a tree $T$ to $\OS_d(T)$.
Recall from Section \ref{sec:cones} that $\varphi$ induces a contraction from $\cone(T)$ to $G_\varphi$, where $G_\varphi$
is a graph whose simplification is canonically isomorphic to $\cone(T')$.  Since the Orlik-Solomon algebra of a graph is canonically
isomorphic to that of its simplification, this tells us that we also 
have a $\cTop$-module $\OS^\cone_d$ that takes a tree $T$ to $\OS_d(\cone(T))$.

\begin{proposition}\label{OSi}
The $\cTop$-modules $\OS_d$ and $\OS^{\cone}_d$ are $d$-small for all $d\in\N$.
\end{proposition}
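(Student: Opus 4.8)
The plan is to exhibit each of $\OS_d$ and $\OS^{\cone}_d$ as a submodule of the $d$-th tensor power of a $1$-small module and then invoke Proposition \ref{tensor}. The key structural input is that the Orlik--Solomon algebra is generated in degree $1$: for any graph $G$ the multiplication map $\OS^1(G)^{\otimes d}\to\OS^d(G)$ is surjective, and it is natural with respect to any algebra homomorphism $\OS^\bullet(G)\to\OS^\bullet(G')$. Since $T$ is a tree, both $T$ and $\cone(T)$ are simple (the cone point introduces no loops, and a cone edge $e_v$ is parallel to no other edge because $p$ lies on no edge of $T$ and the edges $e_v$ have pairwise distinct non-$p$ endpoints), so $\OS^1(T)$ and $\OS^1(\cone(T))$ are free of finite rank on their edge sets. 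Dualizing the multiplication map therefore gives a natural injection $\OS_d(G)\hookrightarrow\OS_1(G)^{\otimes d}$. Taking $G=T$, and taking $G=\cone(T)$ with the functoriality recalled just before the statement — where the induced map $\OS^\bullet(\cone(T))\to\OS^\bullet(\cone(T'))$ is the composite of the contraction-induced algebra quotient $\OS^\bullet(\cone(T))\to\OS^\bullet(G_\varphi)$ with the simplification isomorphism $\OS^\bullet(G_\varphi)\cong\OS^\bullet(\cone(T'))$, hence is an algebra homomorphism — we obtain injections of $\cTop$-modules $\OS_d\hookrightarrow\OS_1^{\otimes d}$ and $\OS^{\cone}_d\hookrightarrow(\OS^{\cone}_1)^{\otimes d}$, where $\OS^{\cone}_1$ denotes the $\cTop$-module $T\mapsto\OS_1(\cone(T))$. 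A submodule of a $d$-small module is $d$-small straight from the definition, and Proposition \ref{tensor} iterated shows the $d$-th tensor power of a $1$-small module is $d$-small, so it remains only to prove that $\OS_1$ and $\OS^{\cone}_1$ are $1$-small.

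I will show each is in fact generated in degrees $\leq 1$, using the characterization from Section \ref{sec:growth}: for a tree $T$ with $|T|\geq 2$, I must check that the value of the module at $T$ is spanned by images of $\varphi^\ast$ over proper contractions $\varphi:T\to T'$. For $\OS_1$, the module $\OS_1(T)$ is free with basis the dual classes $x_e^\ast$, and $\varphi^\ast$ sends $x_{e'}^\ast$ to $x_{\tilde e'}^\ast$, where $\tilde e'$ is the unique edge of $T$ lying over $e'$ (a contraction of trees identifies no two non-contracted edges). Given $e\in\Edge(T)$, contract any other edge to get a proper $\varphi$ not contracting $e$; then $\varphi^\ast(x_{\varphi(e)}^\ast)=x_e^\ast$, so the images span.

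For $\OS^{\cone}_1$ the module $\OS_1(\cone(T))$ is free with basis $\{x_e^\ast\mid e\in\Edge(T)\}\cup\{x_{e_v}^\ast\mid v\in\Vert(T)\}$; on the first family $\varphi^\ast$ behaves exactly as above, while $\varphi^\ast(x_{e_{v'}}^\ast)=\sum_{v\in\varphi^{-1}(v')}x_{e_v}^\ast$. Thus the $x_e^\ast$ with $e\in\Edge(T)$ are covered as before, and the only remaining point is to produce each $x_{e_v}^\ast$. If some edge of $T$ is not incident to $v$, contracting it makes $\{v\}$ a fiber, so $x_{e_v}^\ast$ lies in the image. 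Otherwise $T$ is a star centered at $v$ with leaves $\ell_1,\dots,\ell_k$, $k\geq 2$; contracting the edge at $\ell_1$ puts $x_{e_v}^\ast+x_{e_{\ell_1}}^\ast$ in the span, and contracting the edge at $\ell_2$ puts $x_{e_{\ell_1}}^\ast$ there (since $\ell_1$ is then a singleton fiber), whence $x_{e_v}^\ast$ is in the span as well. This completes the argument.

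The load-bearing but routine points are the explicit descriptions of the module structures used above — in particular the fact that the cone functoriality is genuinely induced by an algebra homomorphism, so that dualizing the multiplication map is legitimate — together with the little star-graph case analysis. I do not anticipate any real obstacle beyond this bookkeeping.
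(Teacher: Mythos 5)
Your argument is correct and takes essentially the same route as the paper: both reduce to the case $d=1$ by dualizing the degree-one generation of the Orlik--Solomon algebra to embed $\OS_d$ and $\OS^{\cone}_d$ into $d$-th tensor powers of $\OS_1$ and $\OS^{\cone}_1$ and then invoke Proposition \ref{tensor}, after which one checks that $\OS_1$ and $\OS^{\cone}_1$ are generated in degree $1$. The only cosmetic difference is in that last check: the paper produces spanning classes by pulling back along explicit morphisms $T\to I_1$, while you use the proper-contraction characterization together with a short star-graph case analysis, which changes nothing of substance.
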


\begin{proof}
Recall from Example \ref{paths} that we have defined $I_m$ to be the standard path of length $m$ on the vertex set $\{0,\ldots,m\}$.
We have $\OS_0 = \OS^{\cone}_0 = P_{I_0}$, which proves that $\OS_0$ and $\OS^{\cone}_0$ are both 0-small.  Since $\OS^\bullet$ 
and $\OS_{\cone}^\bullet$ are generated as algebras in degree 1,
Lemma \ref{tensor} implies that it is sufficient to prove that $\OS_1$ and $\OS_1^{\cone}$ are 1-small.

The module $\OS_1$ associates to any tree a vector space with basis given by its edges.  In particular, $\OS_1(I_1) = \C\cdot x_{01}$.
If $e$ is an edge of $T$ and $\varphi:T\to I_1$ is a morphism that contracts every edge except for $e$, then $\varphi^*x_{01} = x_e$.  This shows
that $\OS_1$ is generated in degree 1 and therefore 1-small.

The edges of the cone over a tree are in bijection with the edges and vertices of the tree, so
the module $\OS_1^{\cone}$ associates to any tree a vector space with basis $\{x_e\}\sqcup\{x_v\}$ indexed by edges and vertices.
In particular, $\OS_1^{\cone}(I_1) = \C\{x_{01},x_0,x_1\}$.
Let $T$ be a tree and $e$ an edge of $T$ with vertices $v$ and $w$.
Consider the unique morphism $\psi:T\to I_1$ that sends $v$ to $0$ and $w$ to $1$.
Then $$\varphi^*x_{01} = x_e \and \varphi^*x_0 = \sum_{\varphi(v')=0} x_{v'}.$$
It is clear that classes of this form span $\OS_1^{\cone}(T)$, hence $\OS_1^{\cone}$ is generated in degree 1 and therefore 1-small.
\end{proof}

\subsection{Flats of cones over trees}\label{sec:flats}
Fix a tree $R$.  For any tree $T$, let $\Comp_R(T)$ be the set of 
ways to break $T$ up into a collection of disjoint subtrees, indexed by the vertices of $R$,
with adjacency of subtrees determined by adjacency in $R$.
More precisely, an element of $\Comp_R(T)$ is a tuple $\underline{U} = \big(U_v \mid v\in \Vert(R))\big)$ of subtrees of $T$
such that 
$$\Vert(T) \;\; = \bigsqcup_{v\in \Vert(R)} \Vert(U_v)$$
and $U_v$ is adjacent to $U_w$ in $T$ if and only if $v$ is adjacent to $w$ in $R$.

We will say that a subset $W\subset\Vert(R)$ is {\bf groovy} if it has the property that every edge
of $R$ is incident to at least one vertex of $W$.
Let $F(T)$ be the set of triples
$(R,W,\underline{U})$, where $R$ is a tree, $\underline{U}\in\Comp_R(T)$,
and $W\subset \Vert(R)$ is groovy.  We say that two triples $(R,W,\underline{U})$ and $(R',W',\underline{U}')$
are {\bf equivalent} if there is an isomorphism from $R$ to $R'$ taking $W$ to $W'$ and $\underline{U}$
to $\underline{U}'$.  

Given a triple $(R,W,\underline{U}) \in F(T)$, we may construct a flat of $\cone(T)$
by taking the edges of $U_v$ for all $v\in \Vert(R)$ along with the edges connecting $v$ to the cone point for all $v\in W$.
Every flat arises in this manner, and two elements of $F(T)$ give rise to the same flat if and only if they are equivalent.
The contraction of $\cone(T)$ along this flat is isomorphic to $\cone(R_W)$, where $R_W$ is the induced forest on the vertex set $W$.

\begin{example}\label{cone-flat}
Let $R$ be a path of length 3 and $T$ a path of length 5.  An element of $\Comp_R(T)$ is a way to break the 6 vertices of $T$ into 4
blocks, each of which consist of adjacent vertices.  In the picture below, we show an element of $\Comp_R(T)$ consisting of blocks
of sizes 2, 1, 2, and 1 (reading from left to right).  We also select the groovy subset $W\subset \Vert(R)$ consisting of the first and last vertex,
which means that the first and last block get connected to the cone point in the corresponding flat of $\cone(T)$.  We denote this flat by thickened edges.
\begin{figure}[ht]
\begin{center}
\includegraphics[width=8cm]{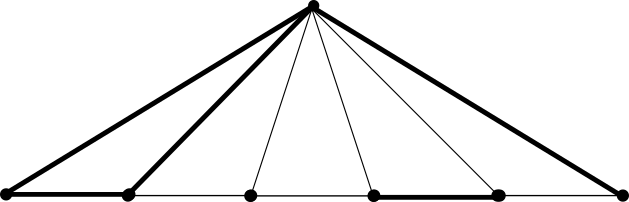}
\end{center}
\end{figure}
\end{example}

Let $\OS^\bullet(R,W,\underline{U})$ denote the Orlik-Solomon algebra of the flat associated with the triple $(R,W,\underline{U})$.
By the discussion of Orlik-Solomon algebras in Section \ref{sec:ss}, this is isomorphic to
$$\bigotimes_{v\in W}\OS^\bullet(U_v)\;\otimes \bigotimes_{v\notin W}\OS^\bullet\big(\cone(U_v)\big).$$

The following lemma is an analogue of \cite[Lemma 4.2]{fs-braid}.  

\begin{lemma}\label{putting them together}
Suppose that we have a tree $R$, a collection of $\cTop$-modules $\big(N_v \mid v\in \Vert(R)\big)$,
and a collection of natural numbers $\big(d_v \mid v\in \Vert(R)\big)$ such that
$N_v$ is $d_v$-small for all $v $.
Let $d = |R| + \sum d_v$.
Define a $\cTop$-module $N$ by the formula
$$N(T)\;\; := \bigoplus_{\underline{U}\in\Comp_R(T)} \;
\bigotimes_{v\in\Vert(R)} N_v(U_v).$$
Then $N$ is $d$-small.
\end{lemma}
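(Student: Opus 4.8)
The plan is to reduce, exactly as in the proof of Proposition~\ref{tensor}, to the case where each $N_v$ is a principal projective, and then to show directly that the resulting $N$ is generated in degrees $\leq d$. For the reduction: the assignment $\mathcal{F}_R := (\underline{N}\mapsto N)$ is functorial in each slot, commutes with arbitrary direct sums in each slot, and sends surjections to surjections (right-exactness of $\otimes_k$). Writing each $d_v$-small $N_v$ as a subquotient of a finite direct sum of principal projectives $P_{S_{v,j}}$ with $|S_{v,j}|\leq d_v$, distributing $\mathcal{F}_R$ over these direct sums, and using that subquotients and direct sums of $d$-small modules are $d$-small, I would be reduced to the following: for any trees $(S_v)_{v\in\Vert(R)}$, the $\cTop$-module $N$ assembled from the $P_{S_v}$ is generated in degrees $\leq d:=|R|+\sum_v|S_v|$. (If $k$ is a field this passage is automatic; in general it uses, as in Proposition~\ref{tensor}, that principal projectives are pointwise free.) The one combinatorial fact I would single out is that, for any $\underline{U}\in\Comp_R(T)$, exactly $|R|$ edges of $T$ join distinct blocks of $\underline{U}$ (these being in bijection with the edges of $R$) while the remaining edges lie within blocks, so that $|T|=|R|+\sum_v|U_v|$.

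So assume $N_v=P_{S_v}$. Then $N(T)$ is the free $k$-module on pairs $(\underline{U},(g_v)_v)$, where $\underline{U}\in\Comp_R(T)$ and $g_v\colon U_v\to S_v$ is a contraction, since $P_{S_v}(U_v)=k[\Hom_{\cT}(U_v,S_v)]$. Unwinding the $\cTop$-module structure, a contraction $\varphi\colon T\to T'$ sends each $\underline{U}'\in\Comp_R(T')$ to $\varphi^{-1}(\underline{U}'):=(\varphi^{-1}(U'_v))_v\in\Comp_R(T)$ (preimages of subtrees under a contraction are subtrees), each restriction $\varphi\colon\varphi^{-1}(U'_v)\to U'_v$ is again a contraction, and $\varphi^*$ carries the $\underline{U}'$-summand of $N(T')$ into the $\varphi^{-1}(\underline{U}')$-summand of $N(T)$ by precomposition in each tensor factor. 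Now fix a tree $T$ with $|T|>d$. By the edge-count identity, $\sum_v|U_v|=|T|-|R|>\sum_v|S_v|$, so some block satisfies $|U_{v^*}|>|S_{v^*}|$; then $g_{v^*}$ collapses at least one edge $e^*$ of $U_{v^*}$, and therefore factors as $g_{v^*}=\bar{g}_{v^*}\circ\psi$, where $\psi\colon U_{v^*}\to U_{v^*}/e^*$ is the simple contraction of $e^*$ and $\bar{g}_{v^*}\colon U_{v^*}/e^*\to S_{v^*}$ is a contraction.

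To conclude, let $\varphi\colon T\to T':=T/e^*$ be the simple contraction of $e^*$, which is an edge lying inside the block $U_{v^*}\subset T$. Collapsing $e^*$ does not alter the $R$-shape, so the images $\bar{U}_v:=\varphi(U_v)$ form a tuple $\underline{\bar{U}}\in\Comp_R(T')$ with $\varphi^{-1}(\underline{\bar{U}})=\underline{U}$, with $\varphi|_{U_{v^*}}=\psi$ and $\varphi|_{U_v}$ an isomorphism for $v\neq v^*$. Applying $\varphi^*$ to the basis element $(\underline{\bar{U}},(\bar{g}_v)_v)$ of $N(T')$, where $\bar{g}_v:=g_v$ for $v\neq v^*$, returns exactly $(\underline{U},(g_v)_v)$, since precomposing $\bar{g}_{v^*}$ with $\psi$ recovers $g_{v^*}$. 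As $\varphi$ is a proper contraction, every basis element of $N(T)$ thus lies in the image of some $\varphi^*$, so $N$ is generated in degrees $\leq d$, finishing the proof. I expect the points needing the most care to be (i) the reduction in the first paragraph, namely that $\mathcal{F}_R$ really preserves subquotients — the same subtlety already handled in Proposition~\ref{tensor} — and (ii) keeping the variances straight while unwinding the $\cTop$-module structure on $N$; once the edge-count identity is in hand, the combinatorial heart of the argument is routine.
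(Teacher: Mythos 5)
Your proposal is correct and follows essentially the same route as the paper: reduce to the case $N_v = P_{S_v}$, use the edge count $|T| = |R| + \sum_v |U_v|$, and observe that the resulting tuples of contractions must collapse edges once $|T| > d$, so generators come from smaller trees. The only cosmetic difference is that you contract one internal edge at a time (using the ``spanned by images of proper contractions'' criterion), while the paper factors the whole tuple at once through a tree with exactly $|R| + \sum d_v$ edges; your extra care in spelling out the reduction to principal projectives and the $\cTop$-module structure on $N$ is consistent with what the paper leaves implicit.
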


\begin{proof}
We may immediately reduce to the case where
$N_v = P_{S_v}$ for some tree $S_v$ with $|S_v| = d_v$.
Then for any tree $T$ and any $\underline{U}\in\Comp_R(T)$,
the corresponding summand of $N(T)$ has a basis given by tuples of contractions from $U_v$ to $S_v$.
Such a map contracts $|U_v| - d_v$ edges.  All together, the number of edges that get contracted
is $\sum \big(|U_v| - d_v\big) = |T| - |R| - \sum d_v$, which means that the tuple of maps factors through
a tree with $|R| + \sum d_v$ edges.
\end{proof}

\subsection{A leaf lemma}
The following technical lemma and corollary will be important in the next section.

\begin{lemma}\label{leaves}
Let $T$ be a tree with at most $l\geq 2$ leaves, and suppose that $Y\subset \Vert(T)$
has the property that every edge of $T$ has exactly one vertex in $Y$.  Then $|T| \leq 2|Y| + l - 2$.
\end{lemma}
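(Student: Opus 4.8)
The plan is to observe that the hypothesis on $Y$ forces $(Y,Z)$, where $Z := \Vert(T)\setminus Y$, to be the (unique) bipartition of the tree $T$, and then to bound the discrepancy $|Z|-|Y|$ by the number of leaves via a degree count. First I would dispose of the degenerate case in which $T$ has no edges: there the asserted inequality reads $0\leq 2|Y|+l-2$, which holds because $l\geq 2$. So assume $T$ has at least one edge; then $T$ is connected with $n := |\Vert(T)|\geq 2$ vertices and $|T| = n-1$ edges, and both $Y$ and $Z$ are nonempty. Since every edge of $T$ has exactly one endpoint in $Y$ and hence exactly one endpoint in $Z$, the pair $(Y,Z)$ is a proper $2$-coloring of $T$, so $|Y|+|Z| = n$ and $|T| = |Y|+|Z|-1$.

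The key step is an incidence count on the $Z$-side. Every edge of $T$ contributes exactly one endpoint lying in $Z$, so $\sum_{z\in Z}\deg(z) = |T| = |Y|+|Z|-1$, and therefore $\sum_{z\in Z}\big(\deg(z)-1\big) = |Y|-1$. Each vertex of $Z$ has degree at least $1$, with equality precisely for those leaves of $T$ that lie in $Z$; writing $k$ for the number of such leaves, the remaining $|Z|-k$ vertices of $Z$ each contribute at least $1$ to the sum, so $|Y|-1\geq |Z|-k$, i.e.\ $|Z|-|Y|\leq k-1$. Since $k$ is at most the total number of leaves of $T$, which by hypothesis is at most $l$, we conclude $|Z|-|Y|\leq l-1$.

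Combining, $|T| = |Y|+|Z|-1 \leq |Y| + (|Y|+l-1) - 1 = 2|Y|+l-2$, as claimed. I do not expect a real obstacle here; the only points demanding care are recognizing that the hypothesis pins down the bipartition and handling the $n\leq 2$ corner cases. One could instead attempt an induction by deleting a leaf, but that is messier: deleting a leaf may leave the leaf count unchanged (when the neighbour had degree $2$) or decrease it by one, which complicates the inductive bookkeeping, so I prefer the direct counting argument above.
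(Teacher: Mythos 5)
Your argument is correct, and it takes a genuinely different route from the paper. You argue directly: the hypothesis makes $(Y,Z)$ with $Z=\Vert(T)\setminus Y$ a proper $2$-coloring, so counting edge--endpoint incidences on the $Z$ side gives $\sum_{z\in Z}\deg(z)=|T|=|Y|+|Z|-1$, hence $\sum_{z\in Z}(\deg(z)-1)=|Y|-1$; since only the leaves of $T$ lying in $Z$ contribute $0$ to this sum, you get $|Z|-|Y|\leq k-1\leq l-1$ and conclude $|T|=|Y|+|Z|-1\leq 2|Y|+l-2$. The paper instead proceeds by induction on $l$: the base case $l=2$ is a path, and for the inductive step one prunes the path from a chosen leaf to the nearest vertex of degree greater than $2$, tracking how both $|T|$ and $|Y|$ change (if the pruned path has $k$ edges, then $|T|$ drops by $k$ while $|Y|$ drops by $\lfloor k/2\rfloor$ or $\lceil k/2\rceil$, which is exactly enough). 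Your double-counting proof is self-contained, avoids the parity bookkeeping of the pruning step, and actually yields the slightly sharper bound $|Z|-|Y|\leq(\text{number of leaves in }Z)-1$; the paper's induction is shorter to state but leans on the reader to verify the base case and the effect of pruning on $Y$. The only points to make sure you state cleanly are the ones you already flag: $Y$ and $Z$ are both nonempty once $T$ has an edge, and trees have no loops, so each edge contributes exactly one incidence to $Z$.
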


\begin{proof}
We proceed by induction on $l$.  If $l=2$, the statement is clear.  Now assume that the statement holds for $l$,
and let $T$ be a tree with $l+1$ leaves.
Choose a leaf of $T$, and consider the path from that leaf to the nearest vertex of degree greater than 2.  Let $k$ be the length of the path.
Let $T'$ be the tree obtained from $T$ by deleting that path, and let $Y'$ be the subset of $Y$ that lies in $T'$.  Then
$T'$ has $l$ leaves, $|T| - |T'| = k$, and $|Y| - |Y'|$ is either $\lfloor k/2\rfloor$ or $\lceil k/2\rceil$, depending on whether or not
the leaf is in $Y$.  In particular, $2|Y'| \leq 2|Y| - k + 1$.
Then
$$|T| = k + |T'| \leq k + 2|Y'| + l - 2 \leq k + 2|Y| - k + 1 + l - 2 = 2|Y| + (l+1) - 2,$$
which completes the proof.
\end{proof}

\begin{corollary}\label{not so fast, buster}
Suppose that $T$ has at most $l\geq 2$ leaves
and $(R,W,\underline{U})\in F(T)$.
Then $$|R| + |R_W| \leq 2|W| + l - 2.$$
\end{corollary}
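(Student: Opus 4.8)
The plan is to deduce this from the leaf lemma (Lemma~\ref{leaves}), applied not to $R$ itself but to an auxiliary tree obtained by subdividing $R_W$. Concretely, let $R'$ be the tree obtained from $R$ by subdividing each edge of $R_W$ exactly once, that is, replacing it by two edges meeting at a single new interior vertex. Since $R_W$ has $|R_W|$ edges, we get $|R'| = |R| + |R_W|$, while subdivision does not change the number of leaves, so $R'$ and $R$ have equally many leaves. Set $Y := W$, regarded as a subset of $\Vert(R')$ (none of the new interior vertices lies in $Y$). It then suffices to check that the pair $(R', Y)$ satisfies the hypotheses of Lemma~\ref{leaves} with the bound $l$ on the number of leaves: the lemma will then yield $|R| + |R_W| = |R'| \le 2|Y| + l - 2 = 2|W| + l - 2$.

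First I would verify the ``exactly one vertex in $Y$'' condition. Because $W$ is groovy, every edge of $R$ is incident to one or two vertices of $W$, and the edges incident to two vertices of $W$ are precisely the edges of $R_W$. An edge of $R$ incident to exactly one vertex of $W$ survives unchanged in $R'$ and has exactly one endpoint in $Y = W$; an edge of $R_W$ is replaced in $R'$ by two edges, each joining a vertex of $W$ to a new interior vertex, so each of these again has exactly one endpoint in $Y$. Hence every edge of $R'$ has exactly one vertex in $Y$, as required.

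It remains to bound the number of leaves of $R'$, equivalently of $R$, by $l$. Here I would use that $R$ arises from $T$ by collapsing the blocks $U_v$: if $v$ is a leaf of $R$, then $U_v$ is joined to the rest of $T$ by a single edge (two such edges, together with connecting paths inside $U_v$ and inside the one neighboring block, would produce a cycle in $T$), meeting $U_v$ at a vertex $x$. If $U_v = \{x\}$, then $x$ is a leaf of $T$; otherwise $U_v$ has at least two vertices, hence a leaf $y \ne x$, and $y$ is then also a leaf of $T$, since all edges leaving $U_v$ pass through $x$. As the blocks $U_v$ are disjoint, distinct leaves of $R$ produce distinct leaves of $T$, so $R$ has at most as many leaves as $T$, hence at most $l$. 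Then Lemma~\ref{leaves} applies (using $l \ge 2$) and completes the proof; the only degenerate case, $\Vert(R)$ a single vertex with $W$ possibly empty, is immediate because then $|R| + |R_W| = 0 \le l - 2 \le 2|W| + l - 2$. I expect this last leaf-count estimate to be the step requiring the most care, even though it is short.
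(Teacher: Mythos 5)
Your proof is correct and takes essentially the same route as the paper: both reduce to Lemma~\ref{leaves} by modifying $R$ along the edges of $R_W$ so that every remaining edge has exactly one endpoint in the relevant vertex set, the only difference being that the paper contracts the edges of $R_W$ (using $|R|+|R_W|-2|W| = |\overline{R}|-2|\overline{W}|$ for the contracted tree $\overline{R}$) while you subdivide them, which keeps $Y=W$ and visibly preserves the leaves. Your careful verification that a leaf of $R$ forces a leaf of $T$ inside the corresponding block $U_v$ spells out a step the paper only asserts, and the rest of your bookkeeping checks out.
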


\begin{proof}
First, we note that since $T$ has at most $l$ leaves and $\underline{U}\in\Comp_R(T)$, $R$ must also have at most $l$ leaves.
Let $\overline{R}$ be the tree obtained from $R$ by contracting the edges of $R_W$, and let $\overline{W}$
be the image of $W$ in the vertex set of $\overline{R}$.  Then $\overline{R}$ is a tree with at most $l$ leaves,
and every edge in $\overline{R}$ has exactly one vertex in $\overline{W}$.
Furthermore, we have $|R| + |R_W| - 2|W| = |\overline{R}| - 2|\overline{W}|$, so we need to prove that
$|\overline{R}| \leq 2|\overline{W}| + l - 2$.  This follows from Lemma \ref{leaves}.
\end{proof}

\subsection{Kazhdan-Lusztig coefficients of cones over trees}\label{sec:kl-thm}
Let $\IH_{2i}^\cone$ be the $\cTop$-module that assigns to any tree $T$ the vector space $\IH_{2i}\big(X_{\cone(T)}\big)$,
which is well defined by Remark \ref{simplification}.
We are now ready to state and prove Theorem \ref{IH-small}.

\begin{proof}[Proof of Theorem \ref{IH-small}]
Suppose that $R$ is a tree and $W\subset \Vert(R)$ is groovy.
Fix a pair of natural numbers $p,q\leq 2i$, and for each $v\in \Vert(R)$, define the $\cTop$-module
$$N_{v} := \begin{cases}\OS_* & \text{if $v\in W$} \\ \OS_*^\cone & \text{if $v\notin W$.}\end{cases}$$
Consider the $\cTop$-module $$N_{(R,W)}(T) \;\; := \bigoplus_{\underline U\in \Comp_R(T)} \;
\left(\bigotimes_{v\in \Vert(R)} N_{v}(U_v)\right)_{\!\! 2i-p-q}\bigotimes\;\;\; \IH_{2(i-q)}\big(\cone(R_W)\big).$$
Then Theorem \ref{spectral} gives us a spectral sequence $E(\cone(-),i)$ in the category of $\cTop$-modules,
converging to $\IH_{2i}^\cone$, with
$$E(\cone(-),i)_{p,q}^{1} \;\;\;\cong \;\;\;
\bigoplus_R \left(
\bigoplus_{\substack{W\subset \Vert(R)\;\text{groovy}\\ |W|=p}}
N_{(R,W)}\right)^{\operatorname{Aut}(R)}.$$
Here the outer direct sum is over isomorphism classes of trees, and the superscript denotes invariants under the action of the group of automorphisms of $R$.
By taking invariants, we ensure that for each $T$ we obtain a sum over flats of $\cone(T)$ 
rather than over elements of $F(T)$.

The direct sum in our expression for $E(\cone(-),i)_{p,q}^{1}$ is not finite.  However, if we restrict to the subcategory $\cTop_l$, Corollary \ref{not so fast, buster}
tells us that $N_{(R,W)}$ vanishes unless $|R| + |R_W| \leq 2p + l - 2$.  Since there are finitely many trees
with at most $2p+l-2$ edges, the direct sum becomes finite.

Note that, when we say that the spectral sequence converges to $\IH_{2i}^\cone$, the precise meaning of this statement
is that the module $\IH_{2i}^\cone$ admits a filtration whose associated graded is isomorphic to the infinity page of the spectral sequence.
In particular, if we can prove that $N_{(R,W)}$ is $(2i+l-2)$-small, this will imply that the first page of the spectral sequence
is $(2i+l-2)$-small, and therefore that the infinity page is also $(2i+l-2)$-small, and finally that
$\IH_{2i}^\cone$ is $(2i+l-2)$-smallish.
By Proposition \ref{OSi} and Lemma \ref{putting them together}, $N_{(R,W)}$ is $\big(|R| + 2i-p-q\big)$-small.
We will complete the proof by showing that $\IH_{2(i-q)}\big(\cone(R_W)\big) = 0$ unless $|R| + 2i-p-q\leq2i+l-2$.  

We may write $R_W \cong R_1 \sqcup \cdots \sqcup R_k$ as a disjoint union of trees.  
We have $$X_{\cone(R_W)} \cong X_{\cone(R_1)}\times\cdots\times X_{\cone(R_k)},$$ thus
the K\"unneth theorem tells us that
\begin{eqnarray*}\IH_{2(i-q)}\big(X_{\cone(R_W)}\big) &\cong& 
\Big(\IH_*\big(X_{\cone(R_1)}\big)\otimes\cdots\otimes\IH_*\big(X_{\cone(R_k)}\big)\Big)_{\! 2i}\\
&\cong& \bigoplus_{r_1 + \cdots + r_k = i-q} 
\IH_{2r_1}\big(X_{\cone(R_1)}\big)\otimes\cdots\otimes \IH_{2r_k}\big(X_{\cone(R_k)}\big).
\end{eqnarray*}
We also have $\IH_{2r_j}\big(X_{\cone(R_j)}\big) = 0$ unless $2r_j \leq |R_j|$ \cite[Proposition 3.4]{EPW}, thus 
this direct sum vanishes
unless $2(i-q) \leq \sum |R_j| = |R_W|$.
This means that, for $\IH_{2(i-q)}\big(X_{\cone(R_W)}\big)$ to be nonzero, we must have
$$|R|+2i-p-q = |R| + 2(i-q) - p +q \leq |R| + |R_W| - p + q.$$
By Corollary \ref{not so fast, buster}, this is at most $p+q + l -2$,
which is in turn bounded above by $2i + l -2$.
\end{proof}

\subsection{Examples}\label{kl-examples}
We end with four families of examples to illustrate Theorem \ref{IH-small}.

\begin{example}\label{paths}
Let $I_m$ be the path of length $m$.
Theorem \ref{IH-small} says that the restriction of $\IH_{2i}^\cone$ to the opposite category of paths is $2i$-smallish, 
and then Theorem \ref{actual polynomial}
says that the dimension of $\IH_{2i}\big(\cone(I_m)\big)$ is eventually a polynomial in $m$ of degree at most $2i$.
The cone on a path is a fan (see the picture in Example \ref{cone-flat}), so \cite[Theorem 1.1]{fan-wheel-whirl}
gives the precise formula $$\dim \IH_{2i}\big(\cone(I_m)\big) = \frac{1}{i+1}\binom{m}{i,i,m-2i} = \frac{1}{i!(i+1)!}\, m(m-1)\cdots(m-2i+1).$$
This is indeed a polynomial in $m$ of degree $2i$, which means that our smallishness result is in fact the best possible. 
\end{example}

The next three examples will use the fact that, 
for any graph $G$, $\dim \IH_2(X_G)$ is equal to the number of corank 1 flats of $G$
minus the number of rank 1 flats of $G$ \cite[Proposition 2.12]{EPW}.
Corank 1 flats of the cone over a tree are in bijection with subtrees 
(the corank 1 flat associated with a triple $(R,W,\underline{U})$ with $|W|=1$
corresponds the subtree $U_v$ for the unique element $v\in W$), 
while rank 1 flats are edges, which are in bijection with edges and vertices of the original tree.

\begin{example}
Let us consider the restriction of $\IH_2^\cone$ to the category $\cTop_3$.  
Let $T = K_{3,1}$ be the tree with edges $e_1$, $e_2$, and $e_3$ meeting at a single vertex, and let 
$\underline{e} = (e_1,e_2,e_3)$.  Every object of $\cTop_3$ is isomorphic to 
$T_{\underline{e}}(\underline{m})$ for some 3-tuple $\underline{m}$ of natural numbers.
Theorem \ref{IH-small} says that our functor is $3$-smallish, 
and then Proposition \ref{actual polynomial}
says that the dimension of $\IH_{2}^\cone(T_{\underline{e}}(\underline{m}))$ is eventually a polynomial of degree at most 3 
in the variables $m_1,m_2,m_3$.

The number of subtrees of $T_{\underline{e}}(\underline{m})$ is equal to 
$$(m_1+1)(m_2+1)(m_3+1) + \binom{m_1+1}{2} + \binom{m_2+1}{2} + \binom{m_2+1}{2},$$
where the first term counts subtrees that contain the vertex of degree 3, while the next three terms count subtrees that touch
only one of the three tails.  The number of edges of $\cone(T_{\underline{e}}(\underline{m}))$ 
is equal to $2(m_1+m_2+m_3)+1$.
We therefore have \begin{eqnarray*}\dim \IH_2^\cone(T_{\underline{e}}(\underline{m})) 
&=&  (m_1+1)(m_2+1)(m_3+1) + \binom{m_1+1}{2} + \binom{m_2+1}{2} + \binom{m_2+1}{2}\\
&& - 2(m_1+m_2+m_3) - 1,\end{eqnarray*} which is indeed a polynomial of degree 3.  
Thus our result that $\IH_2$ is 3-smallish is again the best possible.
\end{example}

\begin{example}
Let $T$ be an arbitrary $T$ and $e$ an edge of $T$.  Let $T_e(m)$ be the tree obtained by subdividing $e$ into $m$ edges.
(In other words, we take $r=1$ and drop the underlines from the notation.)
Theorem \ref{IH-small} and Proposition \ref{actual polynomial} combine to say that $\dim \IH_2^\cone(T_{e}(m))$ is eventually
polynomial in $m$ of degree at most $l$, where $l$ is the number of leaves of $T$.

The number of edges of $\cone(T_e(m))$ is equal to $|T| + m - 1$, which is linear in $m$.
There are three types of subtrees of $T_e(m)$:  those that are disjoint from the set of subdivided edges, those that are contained in the
set of subdivided edges, and all the rest.  The number of subtrees that are disjoint from the set of subdivided edges 
is independent of $m$, the number of subtrees that are contained in the set of subdivided elements is equal to $\binom{m+1}{2}$,
and the number of remaining subtrees is linear in $m$.  Thus $\dim \IH_2^\cone(T_{e}(m))$ is equal to $\binom{m+1}{2} + O(m)$.
Our result on the growth of this dimension is therefore sharp if and only if $l=2$ (Example \ref{paths}).
\end{example}

\begin{example}\label{ex:thag}
Consider the tree $K_{m,1}$ with one central vertex connected to $m$ satellites.
The number of subtrees of $K_{m,1}$ is equal to $2^m + m$,
and the number of edges of $\cone(K_{m,1})$ is $2m+1$,
so we have
$\dim\IH_2^\cone(K_{m,1}) = 2^m - m - 1$.
This is clearly not bounded above by a polynomial in $|K_{m,1}| = m$, which reflects the fact that there is no subcategory 
$\cT_l\subset \cT$ that contains every $K_{m,1}$ and proves that $\IH_2^\cone$ is not finitely generated
as a $\cTop$-module.
The cone over $K_{m,1}$ is a {\bf Thagomizer graph}, and
the dimension of $\IH_{2i}^\cone(K_{m,1})$ for arbitrary $i$ and $m$ is computed in \cite[Theorem 1.1]{thag}.
\end{example}

\bibliography{./symplectic}
\bibliographystyle{amsalpha}

\end{document}